\documentclass[letterpaper,reqno,12pt]{amsart}
\usepackage[margin=1.2in]{geometry}

\usepackage{amsmath} \usepackage{amssymb} \usepackage{amsthm}
\usepackage{amscd} 
\usepackage[all]{xy} 
\usepackage{enumerate}
\usepackage{mathrsfs}
\usepackage[dvipdfm]{hyperref}
\hypersetup{colorlinks=true}

\def\ge{\geqslant}
\def\le{\leqslant}
\def\phi{\varphi}
\def\epsilon{\varepsilon}

\def\to{\longrightarrow}

\def\mod{\operatorname{\,mod}}
\def\Hom{\operatorname{Hom}}
\def\Spec{\operatorname{Spec}}

\def\chara{\operatorname{char}}

\newcommand{\N}{\mathbb{N}}
\newcommand{\Q}{\mathbb{Q}} 
\newcommand{\C}{\mathbb{C}} 
\newcommand{\R}{\mathbb{R}} 
\newcommand{\Z}{\mathbb{Z}}

\newcommand{\A}{\mathbb{A}}
\newcommand{\T}{\mathcal{T}}
\newcommand{\sO}{\mathcal{O}}
\newcommand{\fa}{\mathfrak{a}}
\newcommand{\fb}{\mathfrak{b}}

\newcommand{\m}{\mathfrak{m}}
\newcommand{\n}{\mathfrak{n}}

\newcommand{\q}{\mathfrak{q}}
\newcommand{\U}{\mathfrak{U}}

\newcommand{\Gr}{\mathrm{Gr}}
\newcommand{\Tr}{\mathrm{Tr}}
\newcommand{\GL}{\mathrm{GL}}
\newcommand{\emb}{\mathrm{emb}}
\newcommand{\sh}{\mathrm{sh}}
\newcommand{\fjn}{\mathrm{fjn}}
\newcommand{\FJN}{\mathrm{FJN}}
\newcommand{\fpt}{\mathrm{fpt}}

\newcommand{\lct}{\mathrm{lct}}
\newcommand{\stab}{\mathrm{stab}}
\newcommand{\ustab}{\widetilde{\mathrm{stab}}}
\newcommand{\ulim}{\operatorname{ulim}}
\newcommand{\Aut}{\operatorname{Aut}}

\newcommand{\D}{\Delta}
\newcommand{\len}{\ell_R}
\newcommand{\lul}[1][I]{\ell \ell_R(R/#1)}
\newcommand{\age}[1]{\lceil #1 \rceil}
\newcommand{\qadic}[3][q]{\langle #2 \rangle_{#3, #1}}

\newcommand{\newtau}[3][e]{\tau_{#1}^{#2,#3}}

\newcommand{\fjnn}[3][e]{\fjn^{I, #2, #3}_#1}
\newcommand{\ultra}[1]{{}^* #1}
\newcommand{\cata}[1]{#1_\#}
\newcommand{\catae}[1]{[ #1 ]_m}

\newcommand{\Dreg}[3][n]{\mathcal{D}^{\mathrm{reg}}_{{#2},{#3}}}
\newcommand{\Dquot}[3][n]{\mathcal{D}^{\mathrm{quot}}_{{#2},{#3}}}

\theoremstyle{plain}
\newtheorem{thm}{Theorem}[section] 
\newtheorem{cor}[thm]{Corollary}
\newtheorem{prop}[thm]{Proposition}
\newtheorem{conj}[thm]{Conjecture}
\newtheorem*{mainthm}{Main Theorem}

\newtheorem{lem}[thm]{Lemma}
\theoremstyle{definition} 
\newtheorem{defn}[thm]{Definition}
\newtheorem{propdef}[thm]{Proposition-Definition} 
\newtheorem{eg}[thm]{Example} 
\newtheorem{obs}[thm]{Observation}
\theoremstyle{remark}
\newtheorem{rem}[thm]{Remark}
\newtheorem{ques}[thm]{Question}

\newtheorem*{cl}{Claim}
\newtheorem*{clproof}{Proof of Claim}

\newtheorem*{acknowledgement}{Acknowledgments}

\title{Ascending chain condition for $F$-pure thresholds on a fixed strongly $F$-regular germ}
\author{Kenta Sato}
\address{Graduate School of Mathematical Sciences, University of Tokyo, 3-8-1 Komaba, Meguro-ku, Tokyo 153-8914, Japan}
\email{ktsato@ms.u-tokyo.ac.jp}

\keywords{ascending chain condition, $F$-jumping number, $F$-pure threshold, tame quotient singularities, non-standard extension}
\subjclass[2010]{ 14B05, 13A35, 14L30}

\baselineskip = 15pt
\footskip = 32pt

\begin{document}
\tolerance = 9999

\maketitle
\markboth{KENTA SATO}{ASCENDING CHAIN CONDITION FOR $F$-PURE THRESHOLDS}

\begin{abstract}
In this paper, we prove that the set of all $F$-pure thresholds on a fixed germ of a strongly $F$-regular pair satisfies the ascending chain condition.
As a corollary, we verify the ascending chain condition for the set of all $F$-pure thresholds on smooth varieties or, more generally, on varieties with tame quotient singularities, which is an affirmative answer to a conjecture given by Blickle, Musta\c{t}\v{a} and Smith.
\end{abstract}

\section{Introduction}

In characteristic zero, Shokurov (\cite{Sho}) conjectured that the set of all log canonical thresholds on varieties of any fixed dimension satisfies the ascending chain condition.
This conjecture was partially solved by de Fernex, Ein, and Musta\c{t}\u{a} in \cite{dFEM} and \cite{dFEM2} using generic limit, and finally settled by Hacon, M\textsuperscript{c}Kernan, and Xu in \cite{HMX} using global geometry.

In this paper, we deal a positive characteristic analogue of this problem.
Let $(R,\m)$ be a Noetherian normal local ring of characteristic $p>0$ and $\D$ be an effective $\Q$-Weil divisor on $\Spec R$.
We further assume that $R$ is $F$-finite, that is, the Frobenius morphism $F:R \to R$ is a finite ring homomorphism.
For a proper ideal $\fa \subsetneq R$ and a real number $t \ge 0$, We consider the test ideal $\tau(R, \D, \fa^t)$, which is defined in terms of the Frobenius morphism (see Definition \ref{test def} below).
Since we have $\tau(R,\D, \fa^t) \subseteq \tau(R,\D, \fa^s)$ for every real numbers $0 \le s \le t$, for a given $\m$-primary ideal $I \subseteq R$, we define the $F$-jumping number of $(R,\D; \fa)$ with respect to $I$ as  
\[
\fjn^I (R, \D ;\fa) : = \inf \{ t \ge 0 \mid \tau (R, \D, \fa^t) \subseteq I \} \in \R.
\]
When $I= \m$ and $(R,\D)$ is \emph{strongly $F$-regular}, that is, $\tau(R,\D)=R$, we denote it by $\mathrm{fpt}(R,\D; \fa)$ and call it the \emph{$F$-pure threshold} of $(R,\D;\fa)$.

Since test ideals in positive characteristic enjoy several important properties which hold for multiplier ideals in characteristic zero, it is natural to ask whether or not the set of $F$-pure thresholds satisfies the ascending chain condition.
Blickle, Musta\c{t}\u{a}, and Smith conjectured the following.

\begin{conj}[\textup{\cite[Conjecture 4.4]{BMS2}}]\label{intro conj}
Fix an integer $n \ge 1$, a prime number $p>0$ and a set $\Dreg{n}{p}$ such that every element of $\Dreg{n}{p}$ is an $n$-dimensional $F$-finite Noetherian regular local ring of characteristic $p$.
The set
\[
\T^{\mathrm{reg}}_{n,p,\mathrm{pr}}: = \{ \mathrm{fpt} (A; \fa) \mid A \in \Dreg{n}{p} ,\fa \subsetneq A \textup{ is a principal ideal} \},
\]
 satisfies the ascending chain condition.
\end{conj}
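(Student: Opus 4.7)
The plan is to reduce Conjecture~\ref{intro conj} to the paper's main theorem --- ACC for $F$-pure thresholds on a single fixed strongly $F$-regular germ --- by means of a non-standard extension argument that glues a putative counterexample sequence into one ambient ring.

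First, I would invoke that $F$-pure thresholds are preserved under $\m$-adic completion for $F$-finite rings, and that, by the Cohen structure theorem, every $n$-dimensional $F$-finite regular local ring of characteristic $p$ has completion of the form $k[[x_1,\dots,x_n]]$ for some $F$-finite field $k$. Hence I may assume each $A_i \in \Dreg{n}{p}$ is already a power series ring $k_i[[x_1,\dots,x_n]]$ and each $\fa_i = (f_i)$ for some $f_i$ in the maximal ideal. Suppose toward contradiction that $t_1 < t_2 < \cdots$ is a strictly ascending sequence with $t_i = \fpt(A_i; f_i)$, and fix a non-principal ultrafilter $\U$ on $\N$.

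Second, I would form the ultraproduct $\ultra{A} := \prod_i A_i / \U$, whose Frobenius remains finite but which is not Noetherian. To remedy this, I would pass to the standard-part / catapower $\cata{A}$ obtained by killing elements of infinite $\m$-adic order, which one checks is an $n$-dimensional $F$-finite regular local ring with residue field equal to the ultraproduct $\prod_i k_i / \U$. For every $i$, the tail sequence $(f_j)_{j \ge i}$ produces, via \L{}o\'s's theorem, a principal ideal $(g_i) \subseteq \cata{A}$ whose Frobenius-ideal combinatorics mirror those of the original $f_j$'s on $\U$-large index sets.

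Third --- and this is where the main technical obstacle lies --- I would verify that $\fpt(\cata{A}; g_i) = t_i$ for every $i$. The difficulty is twofold: (a) the passage from $\ultra{A}$ to $\cata{A}$ could \emph{a priori} enlarge or shrink test ideals, so one must check that the Frobenius-linear splittings computing $\tau(\cata{A}, g_i^t)$ transfer faithfully from those computing $\tau(A_j, f_j^t)$ for $\U$-many $j$; and (b) the ultrafilter yields an \emph{a priori} non-standard real, so one must argue that $\fpt(\cata{A}; g_i)$ is standard and equals the classical real $t_i$, by transferring through $\U$ the defining condition of $\fpt$ at each level $q = p^e$ (which is a first-order condition on tuples of Frobenius splittings) and then taking limits in $e$. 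Once this is in hand, $\cata{A}$ is a single strongly $F$-regular germ carrying the strictly ascending sequence $\fpt(\cata{A}; g_1) < \fpt(\cata{A}; g_2) < \cdots$, directly contradicting the main ACC theorem applied to the fixed germ $\cata{A}$ and completing the proof.
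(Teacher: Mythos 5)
Your reduction to a single germ is the right goal, but the mechanism you propose for achieving it fails at its central step. With a non-principal ultrafilter $\U$, the class of the tail sequence $(f_j)_{j\ge i}$ in the ultraproduct does not depend on $i$: altering finitely many entries does not change the class modulo $\U$, so all your elements $g_i$ coincide. Consequently $\fpt(\cata{A};g_i)$ is one fixed number, and by the paper's Theorem \ref{fpt sh2} that number is the \emph{shadow} $\sh(\ulim_m t_m)=\lim_m t_m$, not the individual $t_i$. The ultraproduct construction, by design, only remembers the sequence ``at infinity''; this is exactly why the paper uses it to prove rationality of accumulation points (Theorem \ref{fpt sh2}) and to run the contradiction in Theorem \ref{main}, but it cannot transport each separate $\fpt(A_i;f_i)$ onto one germ. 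There is also a secondary problem: the paper only defines the catapower of a single fixed Noetherian local ring, and for a genuine ultraproduct of distinct rings $k_i[[x_1,\dots,x_n]]$ the residue field $\ulim_i k_i$ need not be $F$-finite unless the degrees $[k_i:k_i^p]$ are uniformly bounded, so $\cata{A}$ need not even belong to the category in which the Main Theorem is stated.

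The paper's actual reduction (Corollary \ref{reg ACC}) is more elementary and avoids ultraproducts at this stage entirely: after completing to write $A_m=k_m[[x_1,\dots,x_n]]$, one chooses a single $F$-finite field $k$ containing (copies of) all the $k_m$ --- for instance an algebraically closed field of large enough transcendence degree over $\F_p$ --- and base changes along the faithfully flat map $k_m[[x_1,\dots,x_n]]\to A:=k[[x_1,\dots,x_n]]$. As in \cite[Theorem 3.5 (i)]{BMS2}, this preserves $F$-pure thresholds, so every $\fpt(A_m;\fa_m)$ lies in $\FJN^{\m_A}(A,0)$ for the one fixed regular germ $A$, and the Main Theorem gives the contradiction. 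If you want to salvage your outline, replace the ultraproduct gluing by this base-change step; the ultraproduct should only enter afterwards, inside the proof of ACC on the fixed germ, where it is applied to the sequence of ideals rather than to the sequence of ambient rings.
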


This problem has been considered by several authors (\cite{BMS2}, \cite{HnBWZ}, and \cite{HnBW}).
We give an affirmative answer to this conjecture.

\begin{thm}[Corollary \ref{reg ACC}]\label{intro reg}
With the notation above, the set
\[
\T^{\mathrm{reg}}_{n,p}: = \{ \mathrm{fpt} (A; \fa) \mid A \in \Dreg{n}{p}, \fa \subsetneq A \textup{ is an ideal} \}
\]
satisfies the ascending chain condition.
\end{thm}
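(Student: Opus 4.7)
The plan is to derive Theorem~\ref{intro reg} from the main theorem of the paper (ACC for $F$-pure thresholds on a fixed strongly $F$-regular germ) by a non-standard (ultraproduct) reduction that packages a sequence of varying regular local rings into a single fixed germ. Suppose for contradiction that one has a strictly ascending sequence $c_i := \fpt(A_i; \fa_i)$ with $A_i \in \Dreg{n}{p}$ and $\fa_i \subsetneq A_i$ a proper ideal. Since $0 \le c_i \le n$, after passing to a subsequence the $c_i$ converge. I would first replace each $A_i$ by its $\m_i$-adic completion $\hat{A}_i \cong k_i[[x_1,\ldots,x_n]]$ for some $F$-finite field $k_i$ of characteristic $p$; this does not affect $c_i$, since the $F$-pure threshold is preserved under the faithfully flat local map given by completion of an $F$-finite regular local ring.

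Next, I would fix a non-principal ultrafilter $\mathcal{U}$ on $\N$, form the ultraproduct $\ultra{A} := \prod_i A_i / \mathcal{U}$, and pass to its ``sharp'' catapower $R := \cata{A}$, arranged so that $R$ is an $F$-finite $n$-dimensional complete regular local ring of the form $K[[x_1,\ldots,x_n]]$ over an $F$-finite field $K$ of characteristic $p$; in particular $R$ is strongly $F$-regular, so the main theorem applies to the germ $(R,0)$. For each $i$, the composition $A_i \hookrightarrow \ultra{A} \twoheadrightarrow R$ is a faithfully flat local ring map, and I set $\fb_i := \fa_i R$. The crucial claim is that $\fpt(R; \fb_i) = \fpt(A_i; \fa_i) = c_i$ for every $i$; granting this, $\{\fpt(R;\fb_i)\}_i$ is a strictly ascending sequence of $F$-pure thresholds on the single strongly $F$-regular germ $R$, directly contradicting the main theorem.

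The hard part lies in this crucial claim, which splits into two technical points. First, the sharp construction must be carried out so that $R$ is actually $F$-finite: the residue field of the naive ultraproduct is not $F$-finite in general (it can fail when the degrees $[k_i : k_i^p]$ are unbounded), so one should pass to a sub-ultrafilter along which $[k_i : k_i^p]$ is constant, or use a more refined sharp quotient that guarantees $F$-finiteness. Second, one needs a base-change statement for test ideals: for a faithfully flat local map $A \to B$ of $F$-finite regular local rings with geometrically regular fibers, the identity $\tau(B,(\fa B)^t) = \tau(A,\fa^t)\, B$ should hold, whence $\fpt(B; \fa B) = \fpt(A; \fa)$. Both points are standard in flavor but delicate in positive characteristic, and they carry the real weight of the reduction; once they are in place, the contradiction with the main theorem is immediate.
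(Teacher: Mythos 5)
Your reduction hinges on having a flat local map $A_i \to R$ through which you can transport the ideal $\fa_i$ and the $F$-pure threshold $c_i$; but when the rings $A_i$ genuinely vary with $i$, no such map exists. An element of the ultraproduct $\ulim_j A_j$ is a $\U$-equivalence class of tuples $(a_j)_j$ with $a_j \in A_j$, and there is no way to view a fixed $a \in A_i$ as such a tuple: the ``constant sequence'' $(a,a,\dots)$ is meaningless because $a$ is not an element of $A_j$ for $j\neq i$. The catapower machinery in the paper is set up only for ultrapowers of a \emph{single} ring (where the diagonal $R\to\ultra{R}$ supplies the needed map), not for ultraproducts of a varying family. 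Consequently the step ``$A_i \hookrightarrow \ultra{A} \twoheadrightarrow R$ is a faithfully flat local map'' has no content, and the ideal $\fb_i := \fa_i R$ is undefined. The $F$-finiteness concern you flag is real as well, and it is not clearly repaired by a sub-ultrafilter: the $p$-degrees $[k_i : k_i^p]$ could all be distinct, in which case no sub-ultrafilter makes them constant; but that issue is secondary to the missing maps.

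The paper does something simpler and needs no ultraproduct at this stage. After replacing each $A_m$ by its completion $k_m[[x_1,\dots,x_n]]$, it chooses a single $F$-finite field $k$ containing every $k_m$ (for instance, a sufficiently large algebraically closed field, which is perfect and hence $F$-finite) and sets $A := k[[x_1,\dots,x_n]]$. The inclusions $A_m = k_m[[x_1,\dots,x_n]] \hookrightarrow k[[x_1,\dots,x_n]] = A$ are honest faithfully flat local maps, and the argument of \cite[Theorem 3.5(i)]{BMS2} (which, in the regular case, allows an arbitrary and not necessarily separable coefficient field extension) gives $\fpt(A; \fa_m A) = \fpt(A_m; \fa_m)$. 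All the thresholds now lie in $\FJN^{\m_A}(A,0)$ for the single germ $A$, and Theorem \ref{main} applies directly. Ultraproducts do appear in the paper, but only inside the proof of Theorem \ref{main} itself, where one has a family of ideals in a \emph{fixed} ring and the diagonal embedding into the catapower is available.
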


Employing the strategy in \cite{dFEM}, we can also verify the ascending chain condition for $F$-pure thresholds on tame quotient singularities.

\begin{thm}[Proposition \ref{quot ACC}]\label{intro quot}
Fix an integer $n \ge 1$, a prime number $p>0$ and a set $\Dquot{n}{p}$ such that every element of $\Dquot{n}{p}$ is an $n$-dimensional $F$-finite Noetherian normal local ring of characteristic $p$ with tame quotient singularities.
The set 
\[
\T^{\mathrm{quot}}_{n,p}: = \{ \mathrm{fpt} (R; \fa) \mid R \in \Dquot{n}{p}, \fa \subsetneq R \textup{ is an ideal} \}
\]
 satisfies the ascending chain condition.
\end{thm}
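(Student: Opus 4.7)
\emph{Proof sketch.} The plan is to follow the characteristic-zero strategy of de Fernex--Ein--Musta\c{t}\v{a} \cite{dFEM}: reduce $F$-pure thresholds on tame quotient singularities to $F$-pure thresholds of pairs on regular germs, and then invoke ACC for such pairs on regular germs, which should follow from the main theorem of this paper together with the non-standard extension argument used in the proof of Theorem \ref{intro reg}.

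For each $R \in \Dquot{n}{p}$, I would write $R$ locally as the invariant subring $S^G$ of a tame action of a finite group $G$ (with $p \nmid |G|$) on an $F$-finite regular local ring $S$ of dimension $n$. For an ideal $\fa \subsetneq R$, the central ingredient is a transformation formula
\[
\mathrm{fpt}(R;\fa) \;=\; \mathrm{fpt}(S, \D_G;\, \fa S),
\]
where $\D_G$ is the effective $\Q$-divisor on $\Spec S$ supported on the ramification locus of $\Spec S \to \Spec R$, with coefficient $(e-1)/e$ along each height-one ramified prime of ramification index $e$. This should follow from the compatibility of test ideals with the Reynolds operator $\tfrac{1}{|G|}\Tr\colon S \to R$ (which is available because $p \nmid |G|$), combined with the standard ramification formula for the relative canonical divisor.

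Next, I would observe that the coefficients of $\D_G$ lie in $\{(e-1)/e : e \ge 1,\ p \nmid e\} \subseteq [0,1)$, whose only accumulation point is $1$, so this coefficient set satisfies DCC. Together with a uniform bound on the number of prime components of $\D_G$ (in terms of $n$ and $|G|$), this confines the admissible boundaries to a DCC-controlled family. A hypothetical strictly ascending chain in $\Tquot{n}{p}$ would then lift, via the transformation formula, to a strictly ascending chain of $F$-pure thresholds $\mathrm{fpt}(S_i, \D_i;\, \fb_i)$ with $S_i \in \Dreg{n}{p}$ and the boundaries $(\D_i)$ drawn from this DCC family. This contradicts ACC for $F$-pure thresholds on regular germs with DCC boundaries, which I expect to follow from the main theorem of the paper by the same non-standard extension argument used to deduce Theorem \ref{intro reg}.

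The step I expect to be the main obstacle is the transformation formula $\mathrm{fpt}(R;\fa) = \mathrm{fpt}(S, \D_G;\, \fa S)$. In characteristic zero the analogous identity for log canonical thresholds is a consequence of the classical ramification formula; in positive characteristic one needs the sharper statement $\tau(R, \fa^t) = \tau(S, \D_G, (\fa S)^t) \cap R$ for all $t \ge 0$, which must be proved directly. This in turn requires a careful interplay between the Reynolds-operator splitting of $R \into S$ and the $p^{-e}$-linear description of test ideals, and is the most delicate step. A secondary concern is ensuring that the non-standard extension machinery remains compatible with carrying along a $\Q$-divisor whose coefficients vary in a DCC set.
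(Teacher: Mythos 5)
The paper's proof uses a cleverer reduction that removes the boundary divisor entirely. By a Chevalley--Shephard--Todd reduction (Lemma \ref{tame quot basic}), one can present the completed tame quotient singularity as $\widehat{\sO_{V,x}}$ with $V = \A^n_k/G$ where $G \subseteq \GL_n(k)$ has order coprime to $p$ and acts \emph{with no fixed points in codimension one}. The quotient map $\pi : \A^n_k \to V$ is then finite surjective, of degree coprime to $p$, and \'etale in codimension one, so your ramification divisor $\D_G$ is actually zero. The transformation formula you flag as the main obstacle collapses to $\fpt(R;\fa) = \fpt(S; \fa S)$ with trivial boundary, which is exactly \cite[Theorem 3.3]{HT} applied to such covers (together with the fact that test ideals commute with completion). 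This reduces the claim directly to Corollary \ref{reg ACC}.

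There is a genuine gap in the route you propose: by keeping a nontrivial $\D_G$, you end up needing ACC for $F$-pure thresholds on regular germs with boundary divisors drawn from a DCC family, and that is strictly stronger than anything the paper establishes. Theorem \ref{main} fixes the pair $(R,\D)$, and Corollary \ref{reg ACC} takes $\D = 0$ throughout its catapower argument; the argument enlarges the residue field to place all the rings $A_m$ inside a single regular local ring $A$, but it carries along the trivial boundary. If the boundary were allowed to vary (both its prime support and its coefficients would change with the index $m$ in a putative ascending chain), you would have to make the ultraproduct/catapower machinery accommodate an ultralimit of divisors with DCC coefficients, identify the resulting $\Q$-Weil divisor on $\cata{A}$, and verify that it again satisfies the hypotheses of Theorem \ref{main}. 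None of this is supplied, and it is not a routine extension. The Chevalley--Shephard--Todd reduction is precisely what renders all of this unnecessary.
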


In order to prove Theorem \ref{intro reg}, it is enough to show that the set of all $F$-pure thresholds on a fixed $F$-finite Noetherian regular local ring satisfies the ascending chain condition.
We consider this problem in a more general setting.
Let $(R,\D)$ be a pair, that is, $(R,\m)$ is an $F$-finite Noetherian normal local ring of characteristic $p>0$ and $\D$ be an effective $\Q$-Weil divisor on $\Spec R$.
For a given $\m$-primary ideal $I \subseteq R$, we define
\[
\FJN^I(R,\D) : = \{ \fjn^I(R,\D; \fa) \mid \fa \subsetneq R \textup{ is an ideal} \} \subseteq \R_{\ge 0}.
\]
We note that if $(R,\D)$ is strongly $F$-regular and $I= \m$, then the set $\FJN^I(R,\D)$ coincides with the set of all $F$-pure thresholds 
\[
\mathrm{FPT}(R,\D) := \{ \mathrm{fpt}(R,\D; \fa) \mid \fa \subsetneq R \textup{ is an ideal} \}.
\]

\begin{mainthm}[Theorem \ref{main}]
Let $(R,\D)$ be a pair such that $K_X+\D$ is $\Q$-Cartier with index not divisible by $p$, where $K_X$ is a canonical divisor of $X=\Spec R$ and $I \subseteq R$ be an $\m$-primary ideal.
Assume that $\tau(R,\D)$ is $\m$-primary or trivial.
Then the set $\FJN^I(R,\D)$ satisfies the ascending chain condition.
In particular, if $(R,\D)$ is strongly $F$-regular, then the set $\mathrm{FPT}(R,\D)$ satisfies the ascending chain condition.
\end{mainthm}

For a real number $t>0$ and a power $q$ of $p$, we consider the ascending sequence $\{ \qadic{t}{n} \}_{n \in \N}$, where $\qadic{t}{n} : = \age{t q^n-1}/q^n$ is the \emph{$n$-th truncation} of $t$ in base $q$.
It is not so hard to prove that the set $\FJN^I(R,\D)$ satisfies the ascending chain condition if and only if for every real number $t>0$, there exists an integer $n_1>0$ with the following property:
for every ideal $\fa \subseteq R$ and every integer $n \ge n_1$, $\tau(R,\D, \fa^{\qadic{t}{n}}) \subseteq I$ if and only if $\tau(R, \D, \fa^{\qadic{t}{n_1}}) \subseteq I$.

In this paper, we define a new ideal $ \newtau{n}{u} (R,\D, \fa^t) \subseteq R$ for every integers $u,n \ge 0$ in terms of the trace map for the Frobenius morphism so that for every $n$, the sequence $\{ \newtau{n}{u}(R,\D, \fa^t) \}_{u \in \N}$ is an ascending chain which converges to $\tau(R,\D, \fa^{\qadic{t}{n}})$.
We investigate the behavior of the ideals \{$\newtau{n}{u}(R,\D, \fa^t)\}_{n \in \N}$ for some fixed $u \ge 0$ instead of the ideals $\{\tau(R, \D, \fa^{\qadic{t}{n}})\}_{n \in \N}$.
In particular, we prove the following theorem, which plays a crucial role in the proof of the main theorem.

\begin{thm}[Corollary \ref{key}]\label{intro key}
Let $(X=\Spec R, \D)$ be a pair such that $(p^e-1)(K_X+\D)$ is Cartier for some integer $e>0$, $I \subseteq R$ be an $\m$-primary ideal, $l, n_0 \ge 0$ and $u \ge 2$ be integers, and $t>0$ be a rational number such that $t=(s/p^e) + ( l/p^e(p^e-1))$ for some integers $s \ge 0$ and $0<l<p^e$.
We set $t_0 : =p^{2e}/(p^e-1)$ and $M_0=(p^{e(n_0+6)}-1) \cdot \emb(R) / (p^e-1)$, where $\emb(R)$ is the embedding dimension of $R$.
Then there exists an integer $n_1>0$ with the following property:
for any ideal $\fa \subseteq R$ such that 
\begin{enumerate}
\item $p^e>\mu_R(\fa) + \lul+\emb(R)$, where $\mu_R(\fa)$ is the number of a minimal generator of $\fa$ and $\lul := \max\{m \ge 0 \mid \m^m \subseteq I\}$, and
\item $\newtau{n_0+1}{ u}(R, \D, \fa^{l t_0}) + \m^{M_0}  \cdot \tau(R, \D) \supseteq \newtau{n_0}{ u} (R, \D, \fa^{l t_0})$
\end{enumerate}
we have
\[
\newtau{n}{u}(R,\D, \fa^t) \subseteq I \textup{ if and only if } \newtau{n_1}{u}(R,\D, \fa^t) \subseteq I
\]
for every integer $n \ge n_1$.
\end{thm}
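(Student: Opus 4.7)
The plan is to establish stabilization of $\newtau{n}{u}(R, \Delta, \fa^t)$ modulo $I$ for $n \geq n_1$, from which the desired equivalence of $I$-containment is immediate. The central tool is a Frobenius-trace recursion: since $(p^e-1)(K_X+\Delta)$ is Cartier, there is a canonical trace $\Tr \colon F^e_* \sO_X(\lceil(p^e-1)\Delta\rceil) \to \sO_X$, and tracking its action on the defining construction of $\newtau{n}{u}$ should yield, for $t$ of the prescribed form $s/p^e + l/(p^e(p^e-1))$, an identity roughly of the form
\[
\newtau{n+1}{u}(R, \Delta, \fa^t) \;=\; \Tr\bigl(F^e_*\bigl(\newtau{n}{u}(R, \Delta, \fa^{t}) \cdot \fa^c\bigr)\bigr),
\]
where $c$ is an integer involving $lt_0 = lp^{2e}/(p^e-1)$. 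The specific form of $t$ is chosen precisely to make the truncations $\qadic{t}{n}$ behave self-similarly in base $p^e$ after the first step, producing such a clean recursion.

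With the recursion in hand, I would propagate hypothesis~(2) by induction on $n \geq n_0$, aiming to show
\[
\newtau{n_0}{u}(R, \Delta, \fa^t) \;\subseteq\; \newtau{n}{u}(R, \Delta, \fa^t) + \m^{M_n}\tau(R, \Delta),
\]
for a decaying sequence of exponents $M_n$ with $M_{n+1} \approx \lfloor M_n/p^e \rfloor - c_0$, where $c_0$ is a colength-type constant depending on $\emb(R)$. The base case $n = n_0+1$ is hypothesis~(2) (after matching the exponent $lt_0$ to the exponent produced by the recursion). The inductive step applies the trace recursion to both sides of the containment, together with the standard estimate $\Tr(F^e_*\m^M) \subseteq \m^{\lfloor M/p^e \rfloor - c_0}$. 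The explicit value $M_0 = (p^{e(n_0+6)}-1)\emb(R)/(p^e-1)$ is calibrated so that $M_n$ remains above $\lul$ (plus a constant absorbing $\tau(R, \Delta)$) for a range of $n$ long enough to reach the desired $n_1$; in that range $\m^{M_n}\tau(R, \Delta) \subseteq I$ holds since $\tau(R, \Delta)$ is either $\m$-primary or trivial.

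The main obstacle is managing these error exponents through the iterated trace. Condition~(1), $p^e > \mu_R(\fa) + \lul + \emb(R)$, enters twice: it bounds the colength constant $c_0$ uniformly, and it ensures that the auxiliary ideal $\fa^c$ produced in the recursion has few enough generators to avoid spoiling the $\m$-adic bookkeeping. The hypothesis $u \geq 2$ is used so that the trace recursion relates the same $u$-level on both sides, rather than a shifted one: one Frobenius iterate absorbs the $s/p^e$ part of $t$, and the second stabilizes the partial sum defining $\newtau{n}{u}$. Once the induction delivers $\newtau{n_0}{u}(R, \Delta, \fa^t) \subseteq \newtau{n}{u}(R, \Delta, \fa^t) + I$ for all $n \geq n_1$, the monotonicity $\newtau{n}{u}(R, \Delta, \fa^t) \subseteq \newtau{n_1}{u}(R, \Delta, \fa^t)$ for $n \geq n_1$ (from $\qadic{t}{n}$ being nondecreasing and test ideals being antitone in the exponent), together with $\newtau{n_1}{u}(R, \Delta, \fa^t) \subseteq \newtau{n_0}{u}(R, \Delta, \fa^t)$, yields the claimed equivalence of $I$-containments.
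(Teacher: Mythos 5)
Your proposal contains a genuine gap. The inductive error-propagation you describe, showing $\newtau{n_0}{u}(R,\D,\fa^t) \subseteq \newtau{n}{u}(R,\D,\fa^t) + \m^{M_n}\tau(R,\D)$ with $M_{n+1} \approx \lfloor M_n/p^e\rfloor - c_0$, only survives a bounded number of steps: $M_n$ decays geometrically and drops below $\lul$ after roughly $n_0+6$ iterations, after which there is nothing left to propagate. But the theorem demands a single $n_1$, independent of $\fa$, such that the equivalence of $I$-containment holds for \emph{every} $n \ge n_1$, and your argument has no mechanism for the error exponent to reset once it has been consumed. So the claim in your final paragraph that the induction yields $\newtau{n_0}{u}(R,\D,\fa^t) \subseteq \newtau{n}{u}(R,\D,\fa^t) + I$ for all $n \ge n_1$ is not reached. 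A secondary technical issue: the clean recursion $\newtau{n+1}{u} = \Tr\bigl(F^e_*(\newtau{n}{u}\cdot\fa^c)\bigr)$ does not literally hold; the correct identity (Proposition \ref{upper test basic}(9)) decreases $n$ by one at the cost of transforming the auxiliary ideal $\q$ via $\q \mapsto \phi^e_\D(F^e_*(\fa^{q^u t^{(n)}}\q))$, and it is precisely this $\q$-bookkeeping that makes the errors trackable through repeated traces.

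The missing idea is a reduction to ideals containing a fixed power of $\m$. The paper first upgrades hypothesis~(2) to Condition $(\star)$, the quantitative bound $\fjnn{n+1}{u}(R,\D,\fa^t;\m) \ge \fjnn{n}{u}(R,\D,\fa^t;\m) - N/p^{en}$ on the drop of the truncated $F$-jumping numbers (Proposition \ref{B to A}), and then uses Condition $(\star)$ to show (Proposition \ref{A to C}) that replacing $\fa$ by $\fb := \fa + \m^M$ for a fixed $M$ does not change whether $\newtau{n}{u}(R,\D,\fa^t) \subseteq I$, for any $n$. Once $\fb \supseteq \m^M$ with $\mu_R(\fa)$ bounded, the sequence $\{\newtau{n}{u}(R,\D,\fb^t)\}_{n\ge 1}$ is a descending chain squeezed between $\tau(R,\D)$ and $\m^{M\age{t}}\cdot\tau(R,\D)$, so it stabilizes within $\len\bigl(\tau(R,\D)/\m^{M\age{t}}\tau(R,\D)\bigr)$ steps (Proposition \ref{ACC for bdd}). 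This length bound, not an error-decay estimate, is what produces the uniform $n_1$; it is the step your proposal does not supply.
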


Another key ingredient of the proof of the main theorem is the rationality of accumulation points of $\FJN^I(R,\D)$.
Blickle, Musta\c{t}\u{a}, and Smith proved in \cite{BMS2} that the set $\T^{\mathrm{reg}}_{n,p,\mathrm{pr}}$ is a closed set of rational numbers using ultraproduct.
Their proof relies on the fact that for any local ring $A \in \Dreg{n}{p}$, any principal ideal $\fa \subsetneq A$, and any integer $e \ge 0$, the test ideal $\tau(A, \fa^{1/p^e})$ can be computed by the trace map $\Tr^e: F^e_* A \to A$ for the $e$-th Frobenius morphism $F^e$, that is, we have $\tau(A, \fa^{1/p^e})= \Tr^e(F^e_* \fa)$, which fails if $\fa$ is not principal.
In order to extend the result to the non-principal case, we introduce the notion of stabilization exponent for a triple $(R,\D, \fa^t)$, which indicates how many times we should compose the trace map for the Frobenius morphism to compute the test ideal $\tau(R,\D, \fa^t)$ (see Definition \ref{stab exp}).

By combining the method used in \cite{BMS2} and some argument about the stabilization exponents, we prove the following theorem.

\begin{thm}[Theorem \ref{fpt sh2}]\label{intro BMS}
Let $(X=\Spec R, \D)$ be a pair such that $K_X+\D$ is $\Q$-Cartier with index not divisible by $p$ and $I \subseteq R$ be an $\m$-primary ideal.
Then the limit of any sequence in $\FJN^I(R,\D)$ is a rational number.
\end{thm}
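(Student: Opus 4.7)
The plan is to extend the ultraproduct argument of Blickle--Musta\c{t}\u{a}--Smith \cite{BMS2} from the principal case to our general setting, with the filtrations $\{\newtau{n}{u}(R,\D,\fa^t)\}_u$ playing the role that the one-step formula $\tau(A,\fa^{1/p^e}) = \Tr^e(F^e_* \fa)$ played there, and Theorem \ref{intro key} providing the finiteness that closes the argument. First I would assume for contradiction a sequence $\fa_i \subseteq R$ with $t_i := \fjn^I(R,\D;\fa_i)$ converging to an irrational $t$; after fixing $e>0$ with $(p^e-1)(K_X+\D)$ Cartier and, if necessary, replacing each $\fa_i$ by its sum with a fixed generating set of a sufficiently high power of $\m$ (which preserves $F$-jumping numbers at the relevant levels), I arrange the uniform bound $\mu_R(\fa_i) + \lul + \emb(R) < p^e$ that is hypothesis (1) of Theorem \ref{intro key}.

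Next, I would form the ultrapower $\ultra R$ along a non-principal ultrafilter $\mathcal U$ and set $\fa_\infty := \prod_i \fa_i / \mathcal U$. The technical crux of the approach is that, for each fixed pair $(n,u)$ and each fixed admissible $s$, the ideal $\newtau{n}{u}(R,\D,\cdot^s)$ is built from finitely many Frobenius-trace compositions, and hence its formation commutes with ultraproducts:
\[
\newtau{n}{u}(\ultra R, \ultra \D, \fa_\infty^s) = \prod_i \newtau{n}{u}(R, \D, \fa_i^s) / \mathcal U.
\]
This is the decisive advantage of the $\newtau{n}{u}$ over the true test ideal, whose definition involves a priori unboundedly many trace iterations and therefore does not descend across the ultraproduct. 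Applying Theorem \ref{intro key} to $(\ultra R, \ultra\D, \fa_\infty)$ and the limit $t$ yields an integer $n_1$ such that for every $n \ge n_1$,
\[
\newtau{n}{u}(\ultra R, \ultra \D, \fa_\infty^t) \subseteq \ultra I \iff \newtau{n_1}{u}(\ultra R, \ultra \D, \fa_\infty^t) \subseteq \ultra I.
\]
Using the ultraproduct dictionary above, this stabilization forces the $F$-jumping numbers $t_i$, for $i$ in a set of $\mathcal U$, to be determined by containments at the finite level $n_1$, hence to lie in the discrete rational set $\{a/p^{e n_1} : a \in \Z_{\ge 0}\}$. Since this set is closed in $\R$ and $t_i \to t$, we conclude $t \in \Q$, contradicting irrationality.

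The main obstacle is verifying hypothesis (2) of Theorem \ref{intro key} at the ultrapower triple, since no closed-form trace description is available in the non-principal case to rule out an infinite strict descent $\newtau{n_0+1}{u}(\ultra R,\ultra \D, \fa_\infty^{l t_0}) + \m^{M_0}\cdot \tau \subsetneq \newtau{n_0}{u}(\ultra R, \ultra\D, \fa_\infty^{l t_0})$ for every $n_0$. Overcoming this is precisely the role of the stabilization exponent of Definition \ref{stab exp}: it bounds uniformly the number of trace compositions needed to compute the test ideal on the ultrapower, and combined with the Noetherianity of an appropriate completion of $\ultra R$ it delivers the pigeonhole stabilization in $n_0$ needed to satisfy (2).
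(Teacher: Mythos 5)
Your proposal takes a genuinely different route from the paper, and unfortunately it has a fatal circularity. Theorem~\ref{intro key} (Corollary~\ref{key}) is stated only for a \emph{rational} exponent $t$ satisfying $p^e(p^e-1)t \in \N$ --- indeed its entire proof (via Propositions~\ref{B to A} and~\ref{A to C}) depends on the $n$-th digits $t^{(n)}$ being eventually constant, which is precisely the statement that $t$ is rational with $p$-power-times-$(p^e-1)$ denominator. You begin by assuming the limit $t$ is irrational for contradiction and then propose to apply Theorem~\ref{intro key} to that very $t$ on the ultrapower; the hypotheses of that theorem are then simply not satisfied, so no conclusion can be drawn. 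In the paper the logical order is the reverse: Theorem~\ref{fpt sh2} is proved first and \emph{independently} of Theorem~\ref{intro key}, and it is the ACC statement (Theorem~\ref{main}) that uses both --- there Theorem~\ref{fpt sh2} supplies the rationality of $t$ before Proposition~\ref{CondA holds} (which packages Corollary~\ref{key}) is invoked.

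The paper's actual proof of Theorem~\ref{fpt sh2} is more direct and does not involve the filtrations $\newtau{n}{u}$ at all. One forms the \emph{catapower} $\cata{R} = \ultra{R}/\bigcap_n (\ultra{\m})^n$ rather than the bare ultrapower $\ultra{R}$ (the latter is generally non-Noetherian, so the test-ideal machinery does not apply to it; your proposal gestures at ``an appropriate completion of $\ultra{R}$'' but this is exactly the catapower and must be used from the start). Then one perturbs: set $\fb_{m,M} := \fa_m + \m^M$ and $\fb_{\infty,M} := \fa_\infty + (\cata{\m})^M$. Proposition~\ref{fpt sh}, resting on the uniform bounds of Propositions~\ref{jump fin colen} and~\ref{stab fin colen} for ideals of bounded colength, shows that $\fjn^I(R,\D;\fb_{m,M}) = \fjn^{I\cdot\cata{R}}(\cata{R},\cata{\D};\fb_{\infty,M})$ for $\U$-most $m$. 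The subadditivity Lemma~\ref{subadd} then bounds $|\fjn^I(R,\D;\fa_m) - \fjn^I(R,\D;\fb_{m,M})| \le s/M$ and similarly on the catapower, and letting $M \to \infty$ shows the shadow of $\ulim_m \fjn^I(R,\D;\fa_m)$ equals $\fjn^{I\cdot\cata{R}}(\cata{R},\cata{\D};\fa_\infty)$. This is rational by Proposition~\ref{disc rat} applied to the ($F$-finite, Noetherian, normal) ring $\cata{R}$. No contradiction argument, no discrete-grid argument, and no appeal to the $\newtau{n}{u}$ filtrations are needed. If you want to salvage your approach you would have to first establish rationality by other means --- at which point Theorem~\ref{intro key} is no longer doing the work you intended.
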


As the consequence of Theorem \ref{intro key} and Theorem \ref{intro BMS}, we obtain the main theorem.

\begin{acknowledgement}
The author wishes to express his gratitude to his supervisor Professor Shunsuke Takagi for his encouragement, valuable advice and suggestions. 
The author is also grateful to Professor Mircea Musta\c{t}\u{a} for
his helpful comments and suggestions.
He would like to thank Doctor Sho Ejiri, Doctor Kentaro Ohno, Doctor Yohsuke Matsuzawa and Professor Hirom Tanaka for useful comments.
A part of this work was carried out during his visit to University of Michigan with financial support from the Program for Leading Graduate Schools, MEXT, Japan.
He was also supported by JSPS KAKENHI 17J04317.
\end{acknowledgement}
\section{Preliminaries}

\subsection{Test ideals}

In this subsection, we recall the definition and some basic properties of test ideals.

A ring $R$ of characteristic $p>0$ is said to be \emph{$F$-finite} if the Frobenius morphism $F: R \to R$ is a finite ring homomorphism.

Through this paper, all rings will be assumed to be $F$-finite and of characteristic $p>0$.
If $R$ is an $F$-finite Noetherian normal ring, then $R$ is excellent (\cite{Kun}) and $X=\Spec(R)$ has a \emph{canonical divisor} $K_X$ (see for example \cite[p.4]{ST17}).

\begin{defn}
A \emph{pair} $(R, \D)$ consists of an $F$-finite Noetherian normal local ring $(R, \m)$ and an effective $\Q$-Weil divisor $\D$ on $\Spec R$.
A \emph{triple} $(R, \D, \fa_\bullet^{t_\bullet} = \prod_{i=1}^m \fa_i^{t_i})$, consists of a pair $(R, \D)$ and a symbol $\fa_\bullet^{t_\bullet}= \prod_{i=1}^m \fa_i^{t_i}$, where $m>0$ is an integer, $\fa_1, \dots, \fa_m \subseteq R$ are ideals, and $t_1, \dots, t_m \ge 0$ are real numbers. 
\end{defn} 

\begin{defn}
Let $(R, \D,  \fa_\bullet^{t_\bullet} = \prod_{i=1}^m \fa_i^{t_i})$ be a triple.
An ideal $J \subseteq R$ is \emph{uniformly $(\D,  \fa_\bullet^{t_\bullet} , F)$-compatible} if 
$\phi( F^e_* (\fa_1^{\age{t_1 (p^e-1)}}  \cdots \fa_m^{\age{t_m (p^e-1)}} J)) \subseteq J$ 
for every $e \ge 0$ and every $\phi \in \Hom_R(F^e_* R(\age{(p^e-1)\D}), R)$.
\end{defn}

\begin{defn}\label{test def}
Let $(R, \D,  \fa_\bullet^{t_\bullet} =\prod_{i=1}^m \fa_i^{t_i})$ be a triple.
Assume that $\fa_1, \dots, \fa_m$ are non-zero ideals.
Then we define the \emph{test ideal} 
\[
\tau(R, \D,  \fa_\bullet^{t_\bullet} )=\tau(R,\D, \prod_{i=1}^m \fa_i^{t_i}) = \tau(R,\D, \fa_1^{t_1} \cdots \fa_m^{t_m})
\] to be an unique minimal non-zero uniformly $(\D,  \fa_\bullet^{t_\bullet}, F)$-compatible ideal.
The test ideal always exists (see \cite[Theorem 6.3]{Sch10}).

When $\fa_i=R$ and $t_i=0$ for every $i$, then we denote the ideal $\tau(R, \D, \fa_\bullet^{t_\bullet})$ by $\tau(R, \D)$.
If $\fa_i=0$ for some $i$, then we define $\tau(R, \D, \fa_\bullet^{t_\bullet})=(0)$.

\end{defn}

\begin{lem}\label{test inc}
Let $(X=\Spec R, \D, \fa^t)$ be a triple.
Then the following hold.
\begin{enumerate}
\item If $t \le t'$ and $\fa' \subseteq \fa$, then $\tau(R, \D, (\fa')^{t'}) \subseteq \tau(R, \D , \fa^t)$.
\item \textup{(\cite[Lemma 6.1]{ST14})} Assume that $K_X+ \D$ is $\Q$-Cartier. Then there exists a real number $\epsilon>0$ such that if $t \le t' \le t+ \epsilon$, then $\tau(R, \D, \fa^{t'} )= \tau(R, \D , \fa^t)$
\end{enumerate}
\end{lem}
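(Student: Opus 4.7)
The plan for part (1) is to apply the minimality characterization of test ideals directly. Setting $J := \tau(R, \D, \fa^t)$, it suffices to prove that $J$ is a non-zero uniformly $(\D, (\fa')^{t'}, F)$-compatible ideal; by the minimality of $\tau(R, \D, (\fa')^{t'})$, this will force the desired containment. Non-triviality of $J$ is automatic (if $\fa' = 0$ the statement is vacuous by the convention of Definition \ref{test def}). For compatibility, I would fix an integer $e \ge 0$ and a map $\phi \in \Hom_R(F^e_* R(\age{(p^e-1)\D}), R)$, and use the two hypotheses $\fa' \subseteq \fa$ and $\age{t'(p^e-1)} \ge \age{t(p^e-1)}$ to obtain
$$(\fa')^{\age{t'(p^e-1)}} \subseteq \fa^{\age{t'(p^e-1)}} \subseteq \fa^{\age{t(p^e-1)}}.$$
Multiplying by $J$, applying $\phi \circ F^e_*(-)$, and invoking the $(\D, \fa^t, F)$-compatibility of $J$ yields
$$\phi(F^e_*((\fa')^{\age{t'(p^e-1)}} J)) \subseteq \phi(F^e_*(\fa^{\age{t(p^e-1)}} J)) \subseteq J,$$
which is exactly the required compatibility.

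For part (2), cited as \cite[Lemma 6.1]{ST14}, the strategy is to combine Noetherianity with a finite-type description of the test ideal available under the $\Q$-Cartier hypothesis. Part (1) shows that $s \mapsto \tau(R, \D, \fa^s)$ is monotone decreasing on $[t, \infty)$, so the descending chain $\{\tau(R, \D, \fa^{t+1/n})\}_{n \ge 1}$ stabilizes in the Noetherian ring $R$, producing some $\epsilon_0 > 0$ and a common value $J_0 = \tau(R, \D, \fa^s)$ for every $s \in (t, t+\epsilon_0]$. Part (1) also gives $J_0 \subseteq \tau(R, \D, \fa^t)$, so only the reverse inclusion is nontrivial. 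Here I would invoke the $\Q$-Cartier assumption: choose $e_0 \ge 1$ with $(p^{e_0}-1)(K_X+\D)$ Cartier, so that $\Hom_R(F^{e_0}_* R(\age{(p^{e_0}-1)\D}), R)$ is a cyclic $F^{e_0}_* R$-module generated by a canonical map $\Phi$. Iterating $\Phi$ produces an explicit representation of $\tau(R, \D, \fa^s)$ as a stabilized sum of images $\Phi^n(F^{ne_0}_*(\fa^{m_n(s)} \tau(R, \D)))$, where $m_n(s)$ is a ceiling that is locally constant in $s$. Choosing $\epsilon > 0$ small enough to preserve these finitely many ceiling values on $[t, t+\epsilon]$ would then give $\tau(R, \D, \fa^{t+\epsilon}) = \tau(R, \D, \fa^t)$.

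The main obstacle I expect lies in the last step of part (2): one needs that the stabilization index in $n$ can be chosen uniformly for all $s$ in a right-neighborhood of $t$, so that only finitely many exponents must be controlled simultaneously. This uniformity is precisely the book\-keeping carried out in \cite[Lemma 6.1]{ST14}, and is the only place where the $\Q$-Cartier hypothesis is genuinely used; without it, one can extract merely a right semi-continuity statement along some subsequence, which is insufficient for the interval statement demanded here.
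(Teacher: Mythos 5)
Your proof of part (1) is correct and is essentially the standard one; the paper offers no proof for (1), treating it as immediate from the definitions, and cites \cite[Lemma 6.1]{ST14} for (2), so there is nothing in the paper to compare against. Reducing (1) to the statement that $J=\tau(R,\D,\fa^t)$ is a non-zero uniformly $(\D,(\fa')^{t'},F)$-compatible ideal, via
\[
(\fa')^{\age{t'(p^e-1)}}\subseteq\fa^{\age{t'(p^e-1)}}\subseteq\fa^{\age{t(p^e-1)}},
\]
and then invoking minimality, is exactly right.

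For part (2) the sketch underestimates what is missing. A small correction first: the chain $\{\tau(R,\D,\fa^{t+1/n})\}_{n\ge 1}$ is \emph{ascending} in $n$ (larger $n$ means smaller exponent, hence larger test ideal by part (1)), so its stabilization comes from ACC; a Noetherian ring does not satisfy DCC on ideals. The serious issue is your claim that the ceiling $m_n(s)$ in the iterated-trace representation is locally constant in $s$. With $q=p^{e_0}$ and $m_n(s)=\age{s q^n}$, this function is left- but \emph{not} right-continuous in $s$: whenever $tq^n\in\Z$, one has $m_n(s)=tq^n+1$ for every $s\in(t,\,t+1/q^n)$ while $m_n(t)=tq^n$. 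Such $t$ are precisely the rational thresholds one cares about, so the ``locally constant'' step fails at the left endpoint $t$. Even granting a uniform stabilization index $N_0$, your argument then yields only the constancy of $\tau(R,\D,\fa^s)$ for $s\in(t,t+\epsilon)$ --- which ACC already gave you --- and the inclusion $\tau(R,\D,\fa^t)\subseteq J_0$ still requires comparing $\phi^{e_0 N_0}_\D\bigl(F^{e_0 N_0}_*(\fa^{tq^{N_0}}\tau(R,\D))\bigr)$ with $\phi^{e_0 N_0}_\D\bigl(F^{e_0 N_0}_*(\fa^{tq^{N_0}+1}\tau(R,\D))\bigr)$. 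That comparison is the genuine content of \cite[Lemma 6.1]{ST14}, where the $\Q$-Cartier hypothesis is used in a non-formal way, and it is not reproduced by the bookkeeping you describe.
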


\begin{defn}
Let $(R, \D)$ be a pair and $\fa \subseteq R$ be an ideal.
A real number $t > 0$ is called a \emph{$F$-jumping number} of $(R, \D; \fa)$ if 
\[
\tau(R, \D, \fa^{t-\epsilon}) \neq \tau(R, \D, \fa^t ),
\]
for all $\epsilon > 0$. 
\end{defn}

\begin{prop}[\textup{\cite[Theorem B]{ST14}}]\label{disc rat}
Let $(X=\Spec R, \D, \fa)$ be a triple such that $K_X+\D$ is $\Q$-Cartier.
Then the set of all $F$-jumping numbers of $(R, \D; \fa)$ is a discrete set of rational numbers.
\end{prop}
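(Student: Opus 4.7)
The plan is to prove local discreteness and rationality of the $F$-jumping numbers simultaneously, via a ``stable image'' description of the test ideal designed to accommodate $K_X+\D$ of arbitrary $\Q$-Cartier index, including when $p$ divides the index. Fix a candidate accumulation point $t_0 > 0$. By Lemma \ref{test inc}(2) there exists $\epsilon > 0$ with $\tau(R,\D,\fa^{t_0+\epsilon'}) = \tau(R,\D,\fa^{t_0})$ for all $0 \le \epsilon' \le \epsilon$; set $J_+ := \tau(R,\D,\fa^{t_0})$. It is then enough to prove that for any fixed $T > 0$ only finitely many jumping numbers lie in $[0,T]$ and every such number is rational.

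The main tool is the identification of $\tau(R,\D,\fa^s)$ with the increasing union over $e \ge 0$ of
\[
J(s,e) := \sum_{\phi \in \mathcal{C}^\D_e} \phi\bigl(F^e_* \fa^{\lceil s(p^e-1)\rceil} J_+\bigr), \qquad \mathcal{C}^\D_e := \Hom_R\bigl(F^e_* R(\lceil (p^e-1)\D\rceil),\, R\bigr),
\]
which stabilizes at $\tau(R,\D,\fa^s)$ by Noetherianity. When $(p^{e_0}-1)(K_X+\D)$ happens to be Cartier for some $e_0$, the Cartier algebra $\bigoplus_e \mathcal{C}^\D_e$ is generated in degree $e_0$ over $R$ acting through Frobenius, so the stabilization level becomes uniform in $s$ and $J(s,e)$ is controlled by finitely many ceilings $\lceil s(p^{ke_0}-1)\rceil$, from which both discreteness and rationality follow. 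The new difficulty in the general $\Q$-Cartier case is precisely that no such $e_0$ exists: the rank-one reflexive modules $\mathcal{C}^\D_e$ correspond to Weil divisors involving $(p^e-1)\D$ and $K_X$ that never become Cartier.

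To overcome this, I would establish an ``approximate finite generation'' of $\bigoplus_e \mathcal{C}^\D_e$ sufficient for the stabilization argument. Writing the index of $K_X+\D$ as $m = p^a m'$ with $\gcd(m',p)=1$, each coefficient of $(p^e-1)\D$ has denominator dividing $m$, so the fractional parts $\{(p^e-1)\D\}$ take only finitely many values. Exploiting this by pigeonhole, I would produce an integer $e_0 > 0$, finitely many effective $\Q$-divisors $E_1,\dots,E_N$ of bounded denominator, and maps $\phi_i \in \mathcal{C}^\D_{e_0}$ such that every $\phi \in \mathcal{C}^\D_{e+e_0}$ factors, up to an error absorbed into the $E_i$, as $\phi_i \circ F^{e_0}_* \psi$ with $\psi \in \mathcal{C}^{\D - E_i}_e$. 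The main obstacle is precisely this factorization: one must track the discrepancy
\[
\lceil (p^{e+e_0}-1)\D\rceil \;-\; p^{e_0}\lceil(p^e-1)\D\rceil \;-\; \lceil (p^{e_0}-1)\D\rceil,
\]
which vanishes when the index is prime to $p$ but is only bounded in general, and absorb it into the auxiliary $E_i$ without spoiling the sum in $e$. Granting this factorization, the piecewise-constant dependence of $\lceil s(p^{e_0}-1)\rceil$ on $s$ together with the finiteness of the data $\{\phi_i, E_i\}$ yields discreteness and rationality of $F$-jumping numbers in $[0,T]$.
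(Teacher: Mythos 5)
This proposition is not proved in the paper at all: it is quoted verbatim from Schwede--Tucker \cite[Theorem B]{ST14}, so the only meaningful comparison is between your sketch and the argument in that reference. There the proof does \emph{not} go through the Cartier algebra $\bigoplus_e \mathcal{C}^\D_e$; it proceeds via regular alterations $\pi\colon Y\to X$, a transformation rule expressing $\tau(R,\D,\fa^t)$ as the image of (the Frobenius trace applied to) global sections on $Y$, and a division/global-generation theorem, from which discreteness and rationality follow by tracking round-ups $\age{tG}$ of finitely many divisors on $Y$. Your proposal instead tries to repair the Cartier-algebra argument of \cite{BSTZ}, which is a genuinely different route.

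The problem is that the repair is not carried out, and the step you skip is exactly the content of the theorem. You write that ``the main obstacle is precisely this factorization'' and then conclude ``granting this factorization\dots''; but when $p$ divides the $\Q$-Cartier index of $K_X+\D$, the algebra $\bigoplus_e \mathcal{C}^\D_e$ is in general \emph{not} finitely generated, and no pigeonhole on the fractional parts $\fract{(p^e-1)\D}$ is known to yield a usable substitute. Concretely: (i) even though the reflexive modules $\mathcal{C}^\D_e$ realize only finitely many isomorphism classes, what the stabilization argument needs is control of the multiplication maps $\mathcal{C}^\D_{e_0}\cdot \mathcal{C}^\D_e\to \mathcal{C}^\D_{e+e_0}$, whose cokernels are governed by the discrepancy $\age{(p^{e+e_0}-1)\D}-p^{e_0}\age{(p^e-1)\D}-\age{(p^{e_0}-1)\D}$ and do not repeat periodically in any way you have established; (ii) ``absorbing the error into effective divisors $E_i$'' replaces $\mathcal{C}^\D_e$ by the larger modules $\mathcal{C}^{\D-E_i}_e$, so the resulting sums dominate $J(s,e)$ rather than being dominated by finitely many of its terms -- the containment goes the wrong way for proving that $\sum_e J(s,e)$ stabilizes at a level uniform in $s$, and the ideals you produce are no longer uniformly $(\D,\fa^s,F)$-compatible for the original pair. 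There are also smaller gaps (e.g.\ the identification of $\tau(R,\D,\fa^s)$ with $\bigcup_e J(s,e)$ built from $J_+=\tau(R,\D,\fa^{t_0})$ for $s<t_0$ needs justification), but the essential one is that the key lemma is asserted, not proved, and is precisely where all known Cartier-algebra approaches break down in the $p\mid(\textup{index})$ case.
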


\begin{defn}
Let $(R, \D, \fa)$ be a triple such that $\fa \neq R$ and $I \subseteq R$ be an $\m$-primary ideal.
We define the \emph{$F$-jumping number} of $(R, \D; \fa)$ with respect to $I$ as
\[
\fjn^I (R, \D; \fa) := \inf \{ t \in \R_{\ge 0} \mid \tau(R, \D, \fa^t) \subseteq I  \} \in \R_{\ge 0}.
\]

When $\tau(R,\D)=R$ and $I = \m$, we denote it by $\mathrm{fpt}(R, \D ;\fa)$ and call it the \emph{$F$-pure threshold} of $(R,\D;\fa)$.
If $\D=0$, then we denote it by $\mathrm{fpt}(R; \fa)$.
\end{defn}

\begin{defn}
Let $(X=\Spec R, \D)$ be a pair and $e \ge 0$ be an integer.
Assume that $(p^e-1)(K_X+\D)$ is Cartier.
Then there exists an isomorphism
\[
\Hom_R(F^e_*( R((p^e-1)\D)) , R) \cong F^e_*R
\]
as $F^e_*R$-modules (see for example \cite[Lemma 3.1]{Sch12}).
We denote  by $\phi_\D^e$ a generator of $\Hom_R(F^e_*( R((p^e-1)\D)), R)$ as an $F^e_*R$-module.
\end{defn}

\begin{rem}
Although a map $\phi_\D^e : F^e_*R \to R$ is not uniquely determined, it is unique up to multiplication by $F^e_*R^\times$. When we consider this map, we only need the information about the image of this map.
Hence we ignore the multiplication by $F^e_* R^\times$.
\end{rem}

Let $R$ be a Noetherian ring of characteristic $p>0$, $e$ be a positive integer, and $\fa \subseteq R$ be an ideal.
Then we denote by $\fa^{[p^e]}$ the ideal generated by $\{ f^{p^e} \in R \mid f \in \fa \}.$

The following proposition seems to be well-known to experts, but difficult to find a proof in the literature.

\begin{prop}\label{test base change}
Let $(R, \m)$ and $(S, \n)$ be $F$-finite Noetherian normal local rings with residue fields $k$ and $l$, respectively.
Let $R \to S$ be a flat local homomorphism, $\D_X$ be an effective $\Q$-Weil divisor on $X=\Spec R$ and $\D_Y$ be the flat pullback of $\D_X$ to $Y=\Spec S$.
Assume that $\m S =\n$ and that the relative Frobenius morphism $F^e_{l/k} : F^e_* k \otimes_k l \to F^e_* l$ is an isomorphism for every $e \ge 0$.
Then the following hold.
\begin{enumerate}
\item The morphism $R \to S$ is a regular morphism, that is, every fiber is geometrically regular.
\item The relative Frobenius morphism $F^e_{S/R} : F^e_*R \otimes_R S \to F^e_* S$ is an isomorphism for every $e \ge 0$.
\item For every $e \ge 0$, we have 
\[
\Hom_R( F^e_* R (\age{(p^e-1) \D_X }), R) \otimes_R S \cong \Hom_S( F^e_* S (\age{(p^e-1) \D_Y }), S).
\]
\item Let $(R,\D_X, \fa_\bullet^{t_\bullet} = \prod_{i=1}^m \fa_i^{t_i} )$ be a triple.
We write $(\fa_\bullet \cdot S)^{t_\bullet} : = \prod_i (\fa_i S)^{t_i}$. 
Then we have
\[\tau(R, \D_X, \fa_\bullet^{t_\bullet}) \cdot S = \tau(S, \D_Y,  (\fa_\bullet \cdot S)^{t_\bullet} ).\]
\item If $(p^e-1)(K_X+\D_X)$ is Cartier for some $e>0$, then $(p^e-1)(K_Y+ \D_Y)$ is also Cartier and $\phi^e_{\D_Y} : F^e_* S \to S$ coincides with the morphism $\phi^e_{\D_X} \otimes_R S : F^e_* R \otimes_R S \to S$ via the isomorphism $F^e_{S/R} : F^e_* R \otimes_R S \to F^e_* S$.
\end{enumerate}
\end{prop}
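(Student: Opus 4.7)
The plan is to establish the five parts in the order (2), (1), (3), (5), (4), each step building on the previous one.

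For (2), both $F^e_* R \otimes_R S$ and $F^e_* S$ are finitely generated $S$-modules by $F$-finiteness, so by Nakayama it suffices to check that the relative Frobenius $F^e_{S/R}$ is an isomorphism after reducing modulo $\n$. The hypothesis $\m S = \n$ yields $\m^{[p^e]} S = \n^{[p^e]}$, and combined with flatness of $R \to S$ the closed-fibre reduction of $F^e_{S/R}$ is identified with the base change of $F^e_{l/k}$ along $k \to l$, which is an isomorphism by hypothesis. Given (2), part (1) follows from the Radu--Andr\'e theorem: for a flat morphism of Noetherian rings in positive characteristic, flatness of the relative Frobenius characterizes geometric regularity of the fibres.

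For (3), flat base change for Hom applies since $R$ is Noetherian (so $F^e_* R(\age{(p^e-1)\D_X})$ is finitely presented), giving
\[
\Hom_R(F^e_* R(\age{(p^e-1)\D_X}), R) \otimes_R S \cong \Hom_S\!\bigl(F^e_* R(\age{(p^e-1)\D_X}) \otimes_R S,\, S\bigr).
\]
The regularity of $R \to S$ from (1) ensures that rank-one reflexive sheaves pull back compatibly, so $R(\age{(p^e-1)\D_X}) \otimes_R S \cong S(\age{(p^e-1)\D_Y})$; together with (2) this rewrites the right-hand side as $\Hom_S(F^e_* S(\age{(p^e-1)\D_Y}), S)$. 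Part (5) is then immediate, since a generator $\phi^e_{\D_X}$ of the left-hand rank-one free $F^e_* R$-module base changes to a generator of the corresponding $F^e_* S$-module, which agrees with $\phi^e_{\D_Y}$ up to a unit in $F^e_* S^{\times}$.

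The main obstacle is (4). Using the characterization of $\tau$ as the smallest non-zero uniformly $F$-compatible ideal, I would prove both inclusions by transferring compatibility across the base change using (3). For the inclusion $\tau(S, \D_Y, (\fa_\bullet \cdot S)^{t_\bullet}) \subseteq \tau(R, \D_X, \fa_\bullet^{t_\bullet}) \cdot S$, note that the right-hand side is non-zero by flatness, and by (3) every $\psi \in \Hom_S(F^e_* S(\age{(p^e-1)\D_Y}), S)$ is an $F^e_* S$-linear combination of maps of the form $\phi \otimes_R S$ with $\phi \in \Hom_R(F^e_* R(\age{(p^e-1)\D_X}), R)$, so uniform compatibility of $\tau(R, \D_X, \fa_\bullet^{t_\bullet})$ transfers to $\tau(R, \D_X, \fa_\bullet^{t_\bullet}) \cdot S$, using $\fa_i^{\age{t_i(p^e-1)}} S = (\fa_i S)^{\age{t_i(p^e-1)}}$. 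For the reverse inclusion, I would rely on the fact that a test element of $(R, \D_X, \fa_\bullet^{t_\bullet})$ survives as a test element of $(S, \D_Y, (\fa_\bullet \cdot S)^{t_\bullet})$ by regularity of $R \to S$, so both test ideals are generated by iterated $\phi$-images of a common test element; matching these generators via (3) produces the equality. The delicate point is bookkeeping the ceiling exponents $\age{t_i(p^e-1)}$ and the $F^e_* R$-linear combinations uniformly in $e$.
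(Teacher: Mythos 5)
The core gap is in your proof of part (2). Nakayama's lemma, for a map of finitely generated $S$-modules, yields \emph{surjectivity} from surjectivity modulo $\n$; it does not yield injectivity, because $F^e_* S$ need not be $S$-flat (that would force $S$ to be regular by Kunz). A counterexample to the shape of your reasoning is the truncation $R \to R/\m$ over $R = k[x]/(x^2)$: it reduces to an isomorphism modulo $\m$ but is not injective. The paper handles injectivity separately, by first establishing (1) (the residue-field hypothesis gives separability of $k\subseteq l$ via \cite[Theorem 26.4]{Mat}, and then \cite[Theorem 28.10]{Mat} and \cite{And} give that $R\to S$ is regular) and then using generic separability of $R\to S$ together with \cite[Theorem 26.4]{Mat} to get injectivity of $F^e_{S/R}$. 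Your chosen ordering (2) before (1) forgoes exactly this tool, so something must replace it.

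Even the surjectivity half of your Nakayama step is not as clean as stated. Reducing $F^e_{S/R}$ by $\n$ (i.e.\ applying $\otimes_S S/\n$) gives
\[
F^e_*(R/\m^{[p^e]}) \otimes_k l \to F^e_*(S/\n^{[p^e]}),
\]
the relative Frobenius of the Artinian thickenings, not the base change of $F^e_{l/k}$ as you assert; identifying its surjectivity with that of $F^e_{l/k}$ is a further reduction you have not carried out. The paper avoids this by taking a different ideal: setting $R' := F^e_* R\otimes_R S$ and $I := (F^e_*\m)\cdot R'$, which lies in the Jacobson radical of $R'$ because both $R\to F^e_*R$ and $S\to R'$ are finite with $\n\cap R = \m$, and then $F^e_{S/R}\otimes_{R'}R'/I$ is literally $F^e_{l/k}$, so Nakayama over $R'$ gives surjectivity cleanly.

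Two smaller points. In (5) you do not address why $(p^e-1)(K_Y+\D_Y)$ is Cartier; the paper cites Aoyama's result that $K_Y$ is the flat pullback of $K_X$. In (4), your forward inclusion via uniform compatibility matches the paper's, but for the reverse inclusion the paper's argument is tighter: it shows $\tau(S,\D_Y,(\fa_\bullet S)^{t_\bullet})\cap R$ is uniformly $(\D_X,\fa_\bullet^{t_\bullet},F)$-compatible (again using (3)) and hence contains $\tau(R,\D_X,\fa_\bullet^{t_\bullet})$, which gives the containment directly; your appeal to common test elements is plausible but is the sketch of a different, more delicate argument rather than a proof.
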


\begin{proof}
Since the relative Frobenius morphism $F_{l/k} : F_* k \otimes_k l \to F_*l$ is injective, the field extension $k \subseteq l$ is separable by \cite[Theorem 26.4]{Mat}.
Then (1) follows from \cite[Theorem 28.10]{Mat} and \cite{And}.

We will prove the assertion in (2). 
Fix an integer $e \ge 0$. 
By (1), the morphism $R \to S$ is generically separable.
It follows from \cite[Theorem 26.4]{Mat} that the relative Frobenius morphism $F^e_{S/R} : F^e_* R \otimes_R S \to F^e_* S$ is injective.

We next consider the surjectivity of the map $F^e_{S/R}$.
We denote the ring $F^e_* R \otimes_R S$ by $R'$.
We consider the following commutative diagram:
\[
\xymatrix{
&F^e_* S \\
S \ar[r] \ar[ur]^{F^e_S} & R' \ar[u]_{F^e_{S/R}} \\
R \ar[r]_{F^e_R} \ar[u] & F^e_* R \ar[u]
}
\]

Since the morphisms $F^e_R : R \to F^e_* R$ and $S \to R'$ are both finite and $\n \cap R = \m$, every maximal ideal of $R'$ contains the maximal ideal $F^e_* \m$ of $F^e_*R$.
Therefore, $I : = (F^e_* \m) \cdot R' \subseteq R'$ is contained in the Jacobson radical of $R'$.
On the other hand, since the finite morphism $F^e_S : F^e_* S \to S$ factors through $F^e_{S/R}$, the morphism $F^e_{S/R}$ is also finite.
Then the morphism 
\[
F^e_{S/R} \otimes_{R'} (R'/I) : R'/I \to (F^e_*S) \otimes_{R'} (R'/I)
\]
 coincides with the relative Frobenius morphism $F^e_{l/k} : F^e_* k \otimes_k l \to F^e_*l$, and hence it is surjective.
Therefore, the map $F^e_{S/R}$ is surjective by Nakayama.

We next prove the assertion in (3).
Since $S$ is flat over $R$ and $F^e_* R (\age{(p^e-1) \D_X })$ is a finite $R$-module, we have
\[
\Hom_R( F^e_* R (\age{(p^e-1) \D_X }), R) \otimes_R S \cong \Hom_S( F^e_* R (\age{(p^e-1) \D_X }) \otimes_R S, S).
\]
By (1), the flat pullback of a prime divisor on $X$ to $Y$ is a reduced divisor. 
Therefore, the Weil divisor $\age{(p^e-1) \D_Y} $ coincides with the flat pullback of $\age{(p^e-1) \D_X }$.
It follows from (2) that $F^e_* R (\age{(p^e-1) \D_X }) \otimes_R S \cong F^e_* S (\age{p^e-1} \D_Y)$, which completes the proof of (3).

For (4), it follows from (3) that the test ideal $\tau(R, \D_X, \fa_\bullet^{t_\bullet}) \cdot S$ is uniformly $(\D_Y, (\fa_\bullet \cdot S)^{t_\bullet}, F)$-compatible and $\tau(S, \D_Y, (\fa_\bullet\cdot S)^{t_\bullet} ) \cap R$ is uniformly $(\D_X, \fa_\bullet^{t_\bullet}, F)$-compatible.
Therefore, we have 
\begin{eqnarray*}
\tau(S, \D_Y, (\fa_\bullet \cdot S)^{t_\bullet}) &\subseteq& \tau(R, \D_X, \fa_\bullet^{t_\bullet}) \cdot S \textup{ and} \\
\tau(S, \D_Y, (\fa_\bullet \cdot S)^{t_\bullet} )  \cap R &\supseteq& \tau(R, \D_X, \fa_\bullet^{t_\bullet} ),
\end{eqnarray*}
which complete the proof of (4).

For (5), we assume that $(p^e-1)(K_X+\D_X)$ is Cartier.
Since the canonical divisor $K_Y$ coincides with the flat pullback of $K_X$ (\cite[Proposition 4.1]{Aoy}, see also \cite[Lemma 45.22.1]{Sta}), the Weil divisor $(p^e-1)(K_Y+\D_Y)$ is also Cartier.
The second assertion in (5) follows from (3).
\end{proof}

Let $(R, \m)$ be a Noetherian local ring.
For a finitely generated $R$-module $M$, we denote by $\mu_R(M)$ the minimal number of generators of $M$ as an $R$-module.
We denote by $\emb(R)$ the embedding dimension $\mu_R(\m)$.
If $M$ has finite length, then we denote by $\len(M)$ the length of $M$ as an $R$-module and define 
\[\ell \ell_R (M) :=\min \{ n \ge 0 \mid \m^n M =0 \}.\] 

The following lemma is well-known to experts, but we prove it for convenience.

\begin{lem}\label{Skoda}
Let $R$ be a Noetherian ring of characteristic $p>0$, let $\fa \subseteq R$ be an ideal, and let $a, b, n$ and $e$ be non-negative integers.
\begin{enumerate}
\item If $n > p^e (\mu_R(\fa)-1)$, then we have $\fa^n = (\fa^{\age{n/p^e}-\mu_R(\fa)})^{[p^e]} \cdot \fa^{n-p^e(\age{n/p^e}-\mu_R(\fa))}$.
In particular, if $b > p^e  (\mu_R(\fa)-1)$, then we have $\fa^{a p^e +b } = (\fa^{a})^{[p^e]} \cdot \fa^b$.
\item Assume that there exist ideals $\fa_1, \dots, \fa_m \subseteq R$ and integers $M_1, \dots, M_m \ge 1$ such that $\fa=\fa_1^{M_1} + \cdots + \fa_m^{M_m}$.
Set $l : = \sum_i \mu_R(\fa_i)$.
If $n >p^e (l-1)$, then we have 
\[
\fa^n = (\fa^{\age{n/p^e}-l})^{[p^e]} \cdot \fa^{n-p^e(\age{n/p^e}-l)}.
\]
In particular, if $b > p^e ((\sum_i \mu_R(\fa_i))-1)$, then we have $\fa^{a p^e +b } = (\fa^{a})^{[p^e]} \cdot \fa^b$.
\end{enumerate}
\end{lem}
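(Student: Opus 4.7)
The containment $\supseteq$ in both parts follows from $(\fa^k)^{[p^e]} \subseteq \fa^{kp^e}$, so I will focus on the reverse inclusion. Setting $a := \lceil n/p^e \rceil - \mu$ and $b := n - p^e a$, where $\mu = \mu_R(\fa)$ in (1) and $\mu = l$ in (2), a short case analysis on $n \bmod p^e$ shows $b \in (p^e(\mu - 1), p^e \mu]$ and $a \ge 0$. Both statements therefore reduce to their ``in particular'' form: if $b > p^e(\mu - 1)$, then $\fa^{ap^e + b} \subseteq (\fa^a)^{[p^e]} \cdot \fa^b$.

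For part (1), fix generators $f_1, \dots, f_r$ of $\fa$ and a monomial generator $v = \prod_i f_i^{c_i}$ of $\fa^{ap^e + b}$, so $\sum c_i = ap^e + b$. Perform the Euclidean split $c_i = p^e q_i + s_i$ with $0 \le s_i < p^e$. Then $\sum s_i \le r(p^e - 1)$, and the identity $p^e \sum q_i + \sum s_i = ap^e + b$ combined with the hypothesis $b > p^e(r - 1)$ gives $\sum q_i > a - 1$, hence $\sum q_i \ge a$. Successively replacing $(q_i, s_i)$ by $(q_i - 1, s_i + p^e)$ whenever $q_i \ge 1$ drives $\sum q_i$ down to $a$ and simultaneously forces $\sum s_i = b$, whereupon $v = \bigl(\prod_i f_i^{q_i}\bigr)^{p^e} \cdot \prod_i f_i^{s_i}$ lies in $(\fa^a)^{[p^e]} \cdot \fa^b$.

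For part (2), expand $\fa^{ap^e + b} = \sum_{\sum n_i = ap^e + b} \prod_i \fa_i^{M_i n_i}$ and fix a tuple $(n_i)$. With $r_i := \mu_R(\fa_i)$, partition $\{1, \dots, m\}$ into $I_1 := \{i : M_i n_i > p^e(r_i - 1)\}$ and its complement $I_2$; since $i \in I_2$ forces $n_i \le p^e(r_i - 1)/M_i$, one has $\sum_{I_2} n_i \le p^e(l - m) < n$, so $I_1 \ne \emptyset$. For each $i \in I_1$, apply the already proven part (1) to $\fa_i$ to obtain $\fa_i^{M_i n_i} = (\fa_i^{M_i \alpha_i})^{[p^e]} \cdot \fa_i^{M_i \beta_i}$ for any nonnegative integers $\alpha_i, \beta_i$ with $\alpha_i p^e + \beta_i = n_i$ and $M_i \beta_i > p^e(r_i - 1)$. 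Using $\fa_i^{M_i k} \subseteq \fa^k$ and multiplying over $i$,
\[
\prod_i \fa_i^{M_i n_i} \subseteq \bigl(\fa^{\sum_{i \in I_1} \alpha_i}\bigr)^{[p^e]} \cdot \fa^{\sum_{i \in I_1} \beta_i + \sum_{i \in I_2} n_i},
\]
reducing the problem to producing $(\beta_i)_{i \in I_1}$ with $\beta_i \in [\lfloor p^e(r_i - 1)/M_i \rfloor + 1,\, n_i]$ and $\sum_{I_1}\beta_i$ inside the window $[b - \sum_{I_2} n_i,\; a(p^e - 1) + b - \sum_{I_2} n_i]$. The crude estimate
\[
\textstyle \sum_{i \in I_1}\bigl(\lfloor p^e(r_i - 1)/M_i \rfloor + 1\bigr) + \sum_{i \in I_2} n_i \le p^e(l - m) + m \le b,
\]
which uses $b \ge p^e(l - 1) + 1$ and holds for every $m \ge 1$ when $p^e \ge 2$, shows that this window meets the feasible range of $\sum_{I_1}\beta_i$, so a valid choice exists.

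The main obstacle is this last combinatorial check in part (2): one must balance the per-index thresholds $\lfloor p^e(r_i - 1)/M_i \rfloor + 1$ on $\beta_i$ against the two-sided window forced by $\sum_{I_1}\alpha_i \ge a$ and $\sum_{I_1}\beta_i + \sum_{I_2} n_i \ge b$, and this is precisely where the hypothesis $b > p^e(l - 1)$---rather than the weaker $b > p^e(\mu_R(\fa) - 1)$ that a direct appeal to (1) would need---is used.
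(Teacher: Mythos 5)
Your proof of (1) matches the paper's (which just says it is ``straightforward by taking a minimal generator''). In part (2), however, the final combinatorial step has a gap. You correctly observe that applying (1) to $\fa_i$ produces nonnegative integers $\alpha_i, \beta_i$ with $\alpha_i p^e + \beta_i = n_i$, which pins $\beta_i$ to the residue class of $n_i$ modulo $p^e$. But your ``crude estimate'' $\sum_{I_1}(\lfloor p^e(r_i-1)/M_i\rfloor + 1) + \sum_{I_2} n_i \le b$ only bounds the sum of the unconstrained interval lower endpoints: the smallest admissible $\beta_i$ in the forced residue class can exceed $\lfloor p^e(r_i-1)/M_i\rfloor + 1$ by as much as $p^e - 1$, and the resulting slack of up to $(p^e-1)|I_1|$ is not absorbed by the estimate. (Separately, since $p^e\sum_{I_1}\alpha_i + \sum_{I_1}\beta_i + \sum_{I_2}n_i = ap^e + b$, the requirement $\sum_{I_1}\alpha_i \ge a$ translates to $\sum_{I_1}\beta_i + \sum_{I_2}n_i \le b$, a one-sided \emph{upper} bound; a two-sided window with $b - \sum_{I_2}n_i$ as its lower endpoint is not the relevant constraint.)

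The argument can be repaired, and the paper's proof shows the clean way. Instead of applying (1) to $\fa_i$ with exponent $M_i n_i$ and then having to divide by $M_i$ (which is what forces the integrality of $\alpha_i$), apply the already-proved $m=1$ case of (2) to the ideal $\fb_i := \fa_i^{M_i}$ with exponent $n_i$. Setting $s_i := \max\{0,\, \age{n_i/p^e} - \mu_R(\fa_i)\}$, one gets $\fb_i^{n_i} = (\fb_i^{s_i})^{[p^e]}\cdot\fb_i^{n_i - p^e s_i}$ in every case, and since $\fb_i \subseteq \fa$ this yields $\prod_i \fb_i^{n_i} \subseteq (\fa^{\sum_i s_i})^{[p^e]}\cdot\fa^{n - p^e\sum_i s_i}$. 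The bound $\sum_i s_i \ge \age{n/p^e} - l$ then falls out of the ceiling inequality $\sum_i\age{n_i/p^e} \ge \age{n/p^e}$, with no modular bookkeeping at all. Equivalently, in your set-up you may simply choose $\alpha_i := s_i$ for $i \in I_1$ (one checks $M_i(n_i - p^e s_i) > p^e(r_i - 1)$ for $i \in I_1$, so this is admissible) and deduce $\sum_{I_1}\alpha_i \ge a$ from the same ceiling inequality, rather than trying to make a range-counting argument close.
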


\begin{proof}
The proof of (1) is straightforward by taking a minimal generator of $\fa$.
For (2), we first consider the case when $m=1$.
If $M_1=1$, then the assertion in (2) is same as that in (1).
If $l = \mu_R(\fa_1)=1$, then the assertion holds because $\fa$ is a principal ideal.
Therefore, we may assume that $M_1 \ge 2 $ and $l \ge 2$.
In this case, it follows from (1) that
\begin{eqnarray*}
\fa^n=\fa_1^{n M_1} &=& (\fa_1^{\age{n M_1/p^e} - l})^{[p^e]} \cdot \fa_1^{n M_1 - p^e(\age{n M_1/p^e} - l)}\\
& \subseteq & (\fa_1^{M_1 (\age{n/p^e}-l)})^{[p^e]} \cdot \fa_1^{n M_1 - p^e M_1 (\age{n/p^e} -l)}\\
&=& (\fa^{\age{n/p^e}-l})^{[p^e]} \cdot \fa^{n-p^e (\age{n/p^e}-l)}.
\end{eqnarray*}

We next consider the case when $m \ge 2$.
Set $\fb_i :=\fa_i^{M_i}$ and $l_i : = \mu_R(\fa_i)$.
Then we have 
\[
\fa^n = \sum_{n_1, \dots, n_m} \prod_{i=1}^m \fb_i^{n_i},
\]
where $n_i$ runs through all non-negative integers such that $\sum_i n_i=n$.
Fix such integers $n_i$ and set $s_i : =\max \{ 0,  \age{n_i/p^e}-l_i \}$.
Then it follows from the first case that $\fb_i^{n_i}=(\fb_i^{s_i})^{[p^e]} \cdot \fb_i^{n_i-p^e s_i}$ for every integer $i$.
Therefore, we have
\begin{eqnarray*}
\prod_i \fb_i^{n_i} &=&(\prod_i \fb_i^{s_i})^{[p^e]} \cdot \prod_i \fb_i^{n_i-p^e s_i}\\
& \subseteq& (\fa^{\sum_i s_i})^{[p^e]} \cdot \fa^{\sum_i (n_i -p^e s_i)},\\
& \subseteq& (\fa^{\age{n/p^e}-l})^{[p^e]} \cdot \fa^{n-p^e (\age{n/p^e}-l)},
\end{eqnarray*}
which completes the proof of (2).
\end{proof}

\subsection{Ultraproduct}
In this subsection, we define the ultraproduct of a family of sets and recall some properties.
We also define the catapower of a Noetherian local ring and prove some properties.
The reader is referred to \cite{Scho} for details.

\begin{defn}
Let $\U$ be a collection of subsets of $\N$.
$\U$ is called an \emph{ultrafilter} if the following properties hold:
\begin{enumerate}
\item $\emptyset \not\in \U$.
\item For every subsets $A, B \subseteq \N$, if $A \in \U$ and $A \subseteq B$, then $B \in \U$.
\item For every subsets $A, B \subseteq \N$, if $A, B \in \U$, then $A \cap B \in \U$.
\item For every subset $A \subseteq \N$, if $A \not\in \U$, then $\N \setminus A \in \U$.
\end{enumerate}

An ultrafilter $\U$ is called \emph{non-principal} if the following holds:
\begin{enumerate}
\setcounter{enumi}{4}
\item If $A$ is a finite subset of $\N$, then $A \not\in \U$.
\end{enumerate}
\end{defn}

By Zorn's Lemma, there exists a non-principal ultrafilter.
From now on, we fix a non-principal ultrafilter $\U$.

\begin{defn}
Let $\{ T_m \}_{m \in \N}$ be a family of sets.
We define the equivalence relation $\sim$ on the set $\prod_{m \in \N} T_m$ by
\[
(a_m)_m \sim (b_m)_m \textup{ if and only if } 
\left\{ m \in \N \mid a_m=b_m \right\} \in \U.
\]
We define the \emph{ultraproduct} of $\{ T_m \}_{m \in \N}$ as
\[
\ulim_{m \in \N} T_m : = \left(\prod_{m \in \N} T_m \right) / \sim.
\]
If $T$ is a set and $T_m=T$ for all $m$, then we denote $\ulim_m T_m$ by $\ultra{T}$ and call it the \emph{ultrapower} of $T$.
\end{defn}

Let $\{ T_m \}_{m \in \N}$ be a family of sets and $a_m \in T_m$ for every $m$.
We denote by $\ulim_m a_m$ the class of $(a_m)_m$ in $\ulim_m T_m$.
Let $\{ S_m \}_m$ be another family of sets and $f_m: T_m \to S_m$ be a map for every $m$.
We can define the map 
\[
\ulim_m f_m : \ulim_m T_m \to \ulim_m S_m
\]
by sending $\ulim_m a_m \in \ulim_m T_m$ to $\ulim_m f_m(a_m) \in \ulim_m S_m$.
If $T_m =T$, $S_m=S$, and $f_m= f$ for every $m \in \N$, then we denote the map $\ulim_m f_m$ by $\ultra{f} : \ultra{T} \to \ultra{S}$.

Let $\{ R_m \}_{m \in \N}$ be a family of rings and $M_m$ be an $R_m$-module for every $m$.
Then $\ulim_m R_m$ has the ring structure induced by that of $\prod_m R_m$ and $\ulim_m M_m$ has the structure of $\ulim R_m$-module induced by the structure of $\prod_m R_m$-module on $\prod_m M_m$.
Moreover, if $k_m$ is a field for every $m$, then $\ulim_m k_m$ is a field.

\begin{prop}\label{ultra field ext}
We have the following properties.
\begin{enumerate}
\item Let $R$ be a Noetherian ring and $M$ be a finitely generated $R$-module.
Then we have $\ultra{M} \cong M \otimes_R \ultra{R}$
\item Let $k$ be an $F$-finite field of positive characteristic.
Then the relative Frobenius morphism $F^e_* (k) \otimes_k \ultra{k} \to F^e_* (\ultra{k})$ is an isomorphism.
In particular, $\ultra{k}$ is an $F$-finite field. 
\end{enumerate}
\end{prop}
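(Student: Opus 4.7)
The plan for (1) is to reduce to the case of free modules by means of a finite presentation, which is available because $R$ is Noetherian and $M$ is finitely generated. First I would define the natural $R$-linear map $\phi_M \colon M \otimes_R \ultra{R} \to \ultra{M}$ by $m \otimes \ulim_i r_i \mapsto \ulim_i(r_i m)$, check that it is well-defined and functorial in $M$, and observe that for a free module $R^k$ of finite rank it is the obvious identification $(\ultra{R})^k \xrightarrow{\sim} \ultra{(R^k)}$. Choosing a presentation $R^n \xrightarrow{f} R^m \to M \to 0$, I would then set up a commutative square comparing the right-exact sequence obtained by applying $-\otimes_R \ultra{R}$ with the one obtained by applying $\ulim_i(-)$, and conclude by the five lemma (or direct chase on a right-exact square) once both rows are shown to be right exact.

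The nontrivial step in (1) is checking right-exactness of the bottom row $\ultra{(R^n)} \to \ultra{(R^m)} \to \ultra{M} \to 0$; this is where the ultrafilter combinatorics enter. Surjectivity onto $\ultra{M}$ is immediate by lifting representatives index-by-index. For exactness in the middle, if $\ulim_i r_i$ maps to zero in $\ultra{M}$, then the set of indices $i$ for which $r_i \in f(R^n)$ lies in $\U$; choosing preimages $s_i$ on this set, and setting $s_i = 0$ elsewhere, produces the required element $\ulim_i s_i \in \ultra{(R^n)}$ that lifts $\ulim_i r_i$.

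For (2), since $k$ is $F$-finite the module $F^e_* k$ is finite over $k$, so applying part (1) yields an isomorphism $F^e_* k \otimes_k \ultra{k} \cong \ultra{(F^e_* k)}$. I would then use the tautological $\ultra{k}$-linear bijection $\ultra{(F^e_* k)} \to F^e_* \ultra{k}$ sending $\ulim_i F^e_*(a_i)$ to $F^e_*(\ulim_i a_i)$, and check that the composite sends $F^e_*(x) \otimes \ulim_i b_i$ to $F^e_*\bigl(x \cdot (\ulim_i b_i)^{p^e}\bigr)$, which is exactly the relative Frobenius $F^e_{\ultra{k}/k}$. The one identity to verify is $(\ulim_i b_i)^{p^e} = \ulim_i b_i^{p^e}$, which holds because the ring structure on $\ultra{k}$ is inherited componentwise from $k$. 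The ``in particular'' statement is then immediate, since any finite $k$-generating set of $F^e_* k$ transports under the isomorphism to a finite $\ultra{k}$-generating set of $F^e_* \ultra{k}$, so $\ultra{k}$ is $F$-finite. The main obstacle is really just the middle-exactness in (1); the rest is functorial bookkeeping with Frobenius twists, simplified by the fact that $p^e$-th powers commute with $\ulim_i$.
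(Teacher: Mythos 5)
Your proof is correct and follows essentially the same route as the paper: reduce (1) to free modules using right-exactness of both functors on a finite presentation, then deduce (2) from the tautological bijection $\ultra{(F^e_*k)}\cong F^e_*(\ultra{k})$ combined with (1). The only difference is that you spell out the right-exactness of $\ulim$ (the index-by-index lifting of preimages over a set in $\U$) and the compatibility of the composite with the relative Frobenius, both of which the paper leaves implicit.
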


\begin{proof}
For (1), we consider the natural homomorphism $M \otimes_R \ultra{R} \to \ultra{M}$.
Since the functors $\ultra{(-)}$ and $(-) \otimes_R \ultra{R}$ are both right exact, we may assume that $M$ is a free $R$-module of finite rank.
In this case, the assertion is obvious.

For (2), we consider the natural bijection $\ultra{(F^e_* k)} \cong F^e_*(\ultra{k})$.
Combining with (1), the relative Frobenius morphism $F^e_*(k) \otimes_k \ultra{k} \to F^e_*( \ultra{k})$ is an isomorphism. 
\end{proof}

Let $\fa_m \subseteq R_m$ be an ideal for every $m$.
Then the natural map $\ulim_m \fa_m \to \ulim_m R_m$ is injective, and hence we can consider $\ulim_m \fa_m$ as an ideal of the ring $\ulim_m R_m$.
Let $\fb_m \subseteq R_m$ be another ideals.
Then $\ulim_m \fb_m \subseteq \ulim_m \fa_m$ if and only if
\[
\left\{ m \in \N \mid \fb_m \subseteq \fa_m \right\} \in \U.
\]

Moreover, we have the equation
\[
(\ulim_m \fa_m) + (\ulim_m \fb_m) = \ulim_m (\fa_m +\fb_m).
\]

\begin{lem}\label{ulim prod}
Let $\{ R_m \}_{m \in \N}$ be a family of rings, $\fa_m, \fb_m \subseteq R_m$ be ideals for every $m$.
Assume that there exists an integer $l>0$ such that $\mu(\fa_m) \le l$ for every $m$.
Then we have 
\[
(\ulim_m \fa_m) \cdot (\ulim_m \fb_m) = \ulim_m (\fa_m \cdot \fb_m).
\]
\end{lem}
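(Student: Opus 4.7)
The inclusion $\ulim_m(\fa_m \fb_m) \supseteq (\ulim_m \fa_m)(\ulim_m \fb_m)$ is the easy direction: given generators $\ulim_m a_m \in \ulim_m \fa_m$ and $\ulim_m b_m \in \ulim_m \fb_m$, their product equals $\ulim_m (a_m b_m)$, which lies in $\ulim_m(\fa_m \fb_m)$ since $a_m b_m \in \fa_m \fb_m$ for a set of indices belonging to $\U$ (in fact, for all $m$). Taking $R$-linear combinations, we obtain the containment.

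For the reverse inclusion, the plan is to exploit the uniform bound $\mu(\fa_m) \le l$ to control the number of summands needed to express elements of $\fa_m \fb_m$. For each $m$, choose a generating set $a_{m,1}, \dots, a_{m,l} \in \fa_m$ of cardinality exactly $l$ (padding with zeros if necessary). Any element of $\fa_m \fb_m$ can then be written in the form $\sum_{i=1}^{l} a_{m,i} b_{m,i}$ with $b_{m,i} \in \fb_m$, where the number of summands $l$ is independent of $m$.

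Now take any $c = \ulim_m c_m \in \ulim_m (\fa_m \fb_m)$. Passing to an element of $\U$ on which $c_m \in \fa_m \fb_m$ (using properties of the ultrafilter to replace $c_m$ by $0$ outside this set if needed), write $c_m = \sum_{i=1}^{l} a_{m,i} b_{m,i}$ as above. Set $\tilde a_i := \ulim_m a_{m,i} \in \ulim_m \fa_m$ and $\tilde b_i := \ulim_m b_{m,i} \in \ulim_m \fb_m$. By the definition of the ring operations on the ultraproduct, we have
\[
c = \ulim_m c_m = \ulim_m \sum_{i=1}^{l} a_{m,i} b_{m,i} = \sum_{i=1}^{l} \tilde a_i \tilde b_i \in (\ulim_m \fa_m)(\ulim_m \fb_m),
\]
which gives the desired containment.

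The only potentially subtle point is ensuring that the representation $c_m = \sum_{i=1}^{l} a_{m,i} b_{m,i}$ can be chosen coherently in $m$; this is where the uniform bound $l$ on the number of generators is essential. Without such a bound, the number of summands could grow with $m$, and one could not assemble the pieces into finitely many elements of the ultraproduct. With the bound in hand, the argument reduces to the observation that finite sums commute with the ultraproduct construction, which follows directly from the definition of addition and multiplication in $\ulim_m R_m$.
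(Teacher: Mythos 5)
Your proof is correct and takes essentially the same approach as the paper: both rely on choosing a generating set of $\fa_m$ of uniformly bounded size $l$ and exploiting the fact that sums of a \emph{fixed} finite length commute with the ultraproduct construction. The paper phrases this at the level of ideals, writing $\ulim_m(\fa_m\fb_m)=\sum_i f_{\infty,i}\cdot(\ulim_m\fb_m)$, whereas you argue element-by-element, but the two are the same argument in different notation.
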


\begin{proof}
Let $\alpha =\ulim_m a_m \in \ulim_m \fa_m$ and $\beta= \ulim_m b_m \in \ulim_m \fb$.
Then we have $\alpha \cdot \beta = \ulim_m (a_m b_m) \in \ulim_m (\fa_m \cdot \fb_m)$.
This shows the inclusion $(\ulim_m \fa_m) \cdot (\ulim_m \fb_m) \subseteq \ulim_m (\fa_m \cdot \fb_m)$.

We consider the converse inclusion.
By the assumption, there exist $f_{m,1}, \dots , f_{m,l} \in \fa_m$ such that $\fa_m=(f_{m,1}, \dots, f_{m,l})$.
Then we have $\fa_m \cdot \fb_m = \sum_i f_{m, i} \cdot \fb_m$, and hence we have 
\[
\ulim_m (\fa_m \cdot \fb_m)= \sum_i f_{\infty, i} \cdot (\ulim_m \fb_m),
\] 
where $f_{\infty, i} : = \ulim_m f_{m, i} \in \ulim_m \fa_m$ for every $i$, which complete the proof of the lemma.
\end{proof}

\begin{propdef}[\textup{\cite[Theorem 5.6.1]{Gol}}]
Let $\{a_m \}_{m \in \N}$ be a sequence of real numbers such that there exist real numbers $M_1, M_2$ which satisfies $M_1<a_m<M_2$ for every $m \in \N$.
Then there exists an unique real number $w \in \R$ such that for every real number $\epsilon >0$, we have
\[
\{ m \in \N \mid |w-a_m| <\epsilon \} \in \U.
\]
We denote this number $w$ by $\sh( \ulim_m a_m)$ and call it the \emph{shadow} of $\ulim_m a_m \in \ultra{\R}$.
\end{propdef}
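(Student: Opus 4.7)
The plan is a standard shadow / standard-part construction, handled by separating uniqueness from existence. \emph{Uniqueness} is immediate: if $w, w' \in \R$ both satisfy the stated property and $w \neq w'$, set $\epsilon := |w - w'|/3 > 0$. Then $A := \{ m \mid |w - a_m| < \epsilon \}$ and $A' := \{ m \mid |w' - a_m| < \epsilon \}$ both lie in $\U$, so by axiom (3) so does $A \cap A'$. In particular $A \cap A' \neq \emptyset$ by axiom (1); but any $m$ in the intersection forces $|w - w'| \le |w - a_m| + |w' - a_m| < 2\epsilon$, contradicting the choice of $\epsilon$.

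For \emph{existence} I would use a Dedekind cut. Define
\[
S := \{\, r \in \R \mid \{ m \in \N \mid a_m > r \} \in \U \,\}.
\]
Note that $S$ is a down-set by axiom (2). We have $M_1 - 1 \in S$, because $\{ m \mid a_m > M_1 - 1 \} = \N$, and $\N \in \U$ (axioms (1) and (4) applied to $\emptyset$). On the other hand, for $r \ge M_2$ the boundedness hypothesis forces $\{ m \mid a_m > r \} = \emptyset \notin \U$, so $r \notin S$. Hence $S$ is nonempty and bounded above, and I set $w := \sup S \in [M_1 - 1, M_2]$.

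To check that $w$ works, fix $\epsilon > 0$. Since $w - \epsilon < w = \sup S$, there is some $r \in S$ with $r > w - \epsilon$, and then $A^- := \{ m \mid a_m > w - \epsilon \} \supseteq \{ m \mid a_m > r \}$ lies in $\U$ by axiom (2). Conversely, $w + \epsilon/2 > w = \sup S$ implies $w + \epsilon/2 \notin S$, i.e.\ $\{ m \mid a_m > w + \epsilon/2 \} \notin \U$; axiom (4) then yields $A^+ := \{ m \mid a_m \le w + \epsilon/2 \} \in \U$. By axiom (3), $A^- \cap A^+ \in \U$, and this intersection is contained in $\{ m \mid |w - a_m| < \epsilon \}$, so the latter lies in $\U$ by axiom (2). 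The only mildly delicate step is the invocation of axiom (4) to convert the non-membership ``$\{a_m > w + \epsilon/2 \} \notin \U$'' into positive membership of the complement; everything else reduces to the order-completeness of $\R$ together with the filter axioms (1)--(3). Observe that non-principality of $\U$ plays no role in this statement.
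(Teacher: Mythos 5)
The paper does not prove this Proposition-Definition; it cites it to Goldblatt's Theorem 5.6.1 on the existence of the shadow (standard part) of a limited hyperreal, and the standard proof there is exactly the Dedekind-cut argument you give. Your argument is correct: uniqueness via the $\epsilon = |w-w'|/3$ trick together with filter axioms (1) and (3); existence by taking $w = \sup S$ with $S = \{r : \{m : a_m > r\} \in \U\}$, where axiom (4) is used (and genuinely needed) to pass from $w+\epsilon/2 \notin S$ to $\{m : a_m \le w+\epsilon/2\} \in \U$, and axioms (2), (3) finish. Your remark that non-principality of $\U$ plays no role is also correct and worth noting, since the paper otherwise always works with a non-principal ultrafilter.
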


Let $(R, \m, k)$ be a local ring.
Then, one can show that $( \ultra{R}, \ultra{\m}, \ultra{k})$ is a local ring.
However, even if $R$ is Noetherian, the ultrapower $\ultra{R}$ may not be Noetherian because we do not have the equation $\cap_{n \in \N} (\ultra{\m})^n = 0$ in general.

\begin{defn}[\textup{\cite{Scho}}]
Let $(R, \m)$ be a Noetherian local ring and $( \ultra{R}, \ultra{\m})$ be the ultrapower.
We define the \emph{catapower} $\cata{R}$ as the quotient ring
\[
\cata{R} : = \ultra{R}/ (\cap_{n} (\ultra{\m})^n).
\]
\end{defn}

\begin{prop}[\textup{\cite[Theorem 8.1.19]{Scho}}]
Let $(R, \m, k)$ be a Noetherian local ring of equicharacteristic and $\widehat{R}$ be the $\m$-adic completion of $R$.
We fix a coefficient field $k \subseteq \widehat{R}$.
Then we have 
\[
\cata{R} \cong \widehat{R} \ \widehat{\otimes}_k (\ultra{k}).
\]
In particular, if $(R,\m)$ is an $F$-finite Noetherian normal local ring, then so is $\cata{R}$.
\end{prop}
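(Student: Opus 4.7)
The plan is to identify both $\cata{R}$ and $\widehat{R}\widehat{\otimes}_k\ultra{k}$ with the inverse limit $\varprojlim_n \ultra{R}/(\ultra{\m})^n$, and then to read off the structural consequences for the $F$-finite normal case.

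First I would identify the right-hand side. Since $R/\m^n$ is a finite-dimensional $k$-vector space for each $n$, Proposition \ref{ultra field ext}(1) applied to the finitely generated $k$-module $R/\m^n$ yields $\ultra{(R/\m^n)} \cong (R/\m^n)\otimes_k \ultra{k}$. Since $\mu_R(\m^n) \le \binom{\emb(R)+n-1}{n}$ is finite, Lemma \ref{ulim prod} gives $(\ultra{\m})^n = \ultra{(\m^n)}$. Together these yield $\ultra{R}/(\ultra{\m})^n \cong (R/\m^n)\otimes_k \ultra{k}$, and taking the inverse limit over $n$ identifies this with $\widehat{R}\widehat{\otimes}_k\ultra{k}$.

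Next I would analyze the natural map
\[
\cata{R} = \ultra{R}\big/\bigcap_n(\ultra{\m})^n \;\longrightarrow\; \varprojlim_n \ultra{R}/(\ultra{\m})^n.
\]
Injectivity is immediate from the definition of $\cata{R}$: any element of the kernel lifts to some $a \in \bigcap_n(\ultra{\m})^n$, whose class in $\cata{R}$ is zero. The main obstacle is surjectivity, which asks that any compatible system $(\alpha_n + (\ultra{\m})^n)_n$ lifts to a single element of $\ultra{R}$. The essential tool here is countable saturation of the ultrapower $\ultra{R}$ with respect to the non-principal ultrafilter $\U$ on $\N$, which is automatic in this setting. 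Concretely, I would lift the system to a sequence $(a_n)_{n\in\N}$ in $\ultra{R}$ satisfying $a_{n+1}-a_n \in (\ultra{\m})^n$, and observe that the countable family of first-order conditions $\{x - a_n \in (\ultra{\m})^n\}_{n\in\N}$ on an unknown $x$ is finitely satisfiable (taking $x=a_N$ for $N$ larger than the finite set of indices in question). Countable saturation then produces a simultaneous solution $a \in \ultra{R}$ whose class in $\cata{R}$ is the desired preimage.

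Finally, for the \emph{In particular} statement, assume $(R,\m)$ is $F$-finite Noetherian normal local. Since $R$ is excellent by Kunz, $\widehat{R}$ is a complete $F$-finite normal local ring. By Proposition \ref{ultra field ext}(2), $\ultra{k}$ is $F$-finite and the relative Frobenius $F^e_{\ultra{k}/k}$ is an isomorphism, so $k \subseteq \ultra{k}$ is separable. Standard flat base change for complete local rings along a separable extension of the coefficient field then shows that $\widehat{R}\widehat{\otimes}_k\ultra{k}$ is a complete Noetherian local ring with residue field $\ultra{k}$; normality is preserved because the base change is regular (precisely the mechanism used in the proof of Proposition \ref{test base change}(1)); and $F$-finiteness of the completed tensor product follows by combining the $F$-finiteness of $\widehat{R}$ over $k$ with that of $\ultra{k}$ over itself. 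I expect the saturation step to be the main conceptual obstacle, with the rest being bookkeeping on top of the first two propositions in this subsection.
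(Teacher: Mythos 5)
Your argument is correct. Note that the paper offers no proof of this proposition at all: it is quoted verbatim from Schoutens (\cite[Theorem 8.1.19]{Scho}), so there is no in-paper argument to compare against. What you have written is essentially the standard proof from that reference: the identification $\ultra{R}/(\ultra{\m})^n \cong \ultra{(R/\m^n)} \cong (R/\m^n)\otimes_k \ultra{k}$ (using that ultraproducts commute with quotients by finitely generated ideals and with finitely generated modules over a Noetherian base), followed by the observation that the separated quotient $\cata{R}$ already maps onto $\varprojlim_n \ultra{R}/(\ultra{\m})^n$ because an ultrapower over a countable index set is $\aleph_1$-saturated, so the nested nonempty internal cosets $a_n+\ultra{(\m^n)}$ have a common point. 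You correctly isolate this saturation step as the crux; it is exactly the mechanism by which the cataproduct turns out to be complete. The deduction of the ``in particular'' clause is also sound, though stated somewhat telegraphically: Noetherianity and normality of $\widehat{R}\,\widehat{\otimes}_k\ultra{k}$ rest on the fact that $k\subseteq\ultra{k}$ is separable (Proposition \ref{ultra field ext}(2)) so that the base change is a regular morphism, and $F$-finiteness follows from the Cohen structure theorem together with $F$-finiteness of $\ultra{k}$. These are the same ingredients the paper invokes elsewhere (e.g.\ in Proposition \ref{test base change}), so no genuine gap remains.
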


Let $(R, \m)$ be a Noetherian local ring, $\cata{R}$ be the catapower and $a_m \in R$ for every $m$.
We denote by $\catae{a_m} \in \cata{R}$ the image of $\ulim_m a_m \in \ultra{R}$ by the natural projection $\ultra{R} \to \cata{R}$.
Let $\fa_m \subseteq R$ be an ideal for every $m \in \N$.
We denote by $[ \fa_m]_m \subseteq \cata{R}$ the image of the ideal $\ulim_m \fa_m \subseteq \ultra{R}$ by the projection $\ultra{R} \to \cata{R}$.

\begin{lem}\label{ultra incl}
Let $(R, \m)$ be a Noetherian local ring, $\fa_m, \fb_m \subseteq R$ be ideals for every $m \in \N$.
If we have $[\fa_m]_m \subseteq [\fb_m]_m$, then for every $\m$-primary ideal $\q \subseteq R$, we have
\[
 \{ m \in \N \mid \fa_m \subseteq \fb_m + \q \} \in \U.
\]
\end{lem}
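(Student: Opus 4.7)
The plan is to argue by contrapositive (or equivalently by contradiction): assume the set in question is not in $\U$, produce an offending element of $\ultra{R}$, and derive a contradiction with the hypothesis $[\fa_m]_m \subseteq [\fb_m]_m$.

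First, I would unpack what the hypothesis means at the level of $\ultra{R}$. Since $\cata{R} = \ultra{R}/\bigcap_{k} (\ultra{\m})^k$, the inclusion $[\fa_m]_m \subseteq [\fb_m]_m$ in $\cata{R}$ is exactly the statement
\[
\ulim_m \fa_m \subseteq \ulim_m \fb_m + \bigcap_{k \ge 0} (\ultra{\m})^k
\]
inside $\ultra{R}$. Next, because $R$ is Noetherian, $\m$ is finitely generated, so Lemma \ref{ulim prod} gives $(\ultra{\m})^k = \ultra{(\m^k)}$ for every $k$. Since $\q$ is $\m$-primary, there exists $n \ge 0$ with $\m^n \subseteq \q$, which yields
\[
\bigcap_{k} (\ultra{\m})^k \;\subseteq\; (\ultra{\m})^n \;=\; \ultra{(\m^n)} \;\subseteq\; \ultra{\q}.
\]
Combining these two inclusions and using the identity $\ulim_m \fb_m + \ulim_m \q = \ulim_m (\fb_m + \q)$ for the sum of ultraproducts of ideals (which follows directly from the definition), I obtain
\[
\ulim_m \fa_m \;\subseteq\; \ulim_m (\fb_m + \q).
\]

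Now suppose for contradiction that $S := \{ m \in \N \mid \fa_m \subseteq \fb_m + \q \} \notin \U$. By the ultrafilter axioms, its complement $S^c = \{ m \mid \fa_m \not\subseteq \fb_m + \q\}$ lies in $\U$. For each $m \in S^c$ pick $a_m \in \fa_m \setminus (\fb_m + \q)$, and for $m \in S$ set $a_m := 0$. Then $\alpha := \ulim_m a_m \in \ulim_m \fa_m$, so by the displayed inclusion there exist $c_m \in \fb_m + \q$ with $\alpha = \ulim_m c_m$. This means the set $T := \{ m \mid a_m = c_m\}$ lies in $\U$. But $T \cap S^c$ is then in $\U$, while by construction every $m \in T \cap S^c$ would satisfy $a_m = c_m \in \fb_m + \q$ and $a_m \notin \fb_m + \q$ simultaneously, forcing $T \cap S^c = \emptyset \in \U$, a contradiction.

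The only slightly delicate step is justifying $\ulim_m \fb_m + \ulim_m \q = \ulim_m(\fb_m + \q)$ and the identity $(\ultra{\m})^k = \ultra{(\m^k)}$; both are genuinely easy given that $R$ is Noetherian (so $\m$, and hence the relevant sums and products of ideals, involve only boundedly many generators), and the rest of the argument is a standard \L{}o\'s-style back-and-forth between membership in an ultraproduct ideal and membership on a set belonging to $\U$. I do not anticipate a serious obstacle; the subtlety is just being careful about the quotient by $\bigcap_k (\ultra{\m})^k$ that defines $\cata{R}$.
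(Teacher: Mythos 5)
Your proposal is correct and follows essentially the same route as the paper: translate the hypothesis into an inclusion of ideals in $\ultra{R}$, use Lemma \ref{ulim prod} to identify $(\ultra{\m})^n$ with $\ultra{(\m^n)}$, absorb the correction term into $\ultra{\q}$ via $\m^n \subseteq \q$, and conclude by the standard correspondence between inclusion of ultraproduct ideals and $\U$-large sets of indices. The only difference is cosmetic: you spell out the contradiction argument for the final step, whereas the paper quietly invokes the equivalence recorded just before Lemma \ref{ulim prod}.
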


\begin{proof}
By the definition of the catapower, if $[\fa_m]_m \subseteq [\fb_m]_m$, then we have
\[
\ulim_m \fa_m \subseteq \ulim_m \fb_m+ (\ultra{\m})^n.
\]
 for every $n$.

On the other hand, it follows from Lemma \ref{ulim prod} that $(\ultra{\m})^n = \ultra{(\m^n)}$.
Therefore we have 
\begin{eqnarray*}
\ulim \fa_m & \subseteq & (\ulim \fb_m) +\ultra{(\m^n)} \\
& =& \ulim (\fb_m + \m^n),
\end{eqnarray*}
which is equivalent to 
\[
\{ m \in \N \mid \fa_m \subseteq \fb + \m^n \} \in \U.
\]
This implies the assertion in the lemma.
\end{proof}

\section{Variants of test ideals}
In this section, we introduce some variants of test ideals by using the trace maps for the Frobenius morphisms and the $q$-adic expansion of a real number (Definition \ref{variants1} and \ref{variants2}).
We also introduce the stabilization exponent (Definition \ref{stab exp}).

\begin{defn}[\textup{cf. \cite[Definition 2.1, 2.2]{HnBWZ}}]
Let $q \ge 2$ be an integer, $t>0$ be a real number and $n \in \Z$ be an integer.
We define the \emph{$n$-th digit} of $t$ in base $q$ by
\[
t^{(n)} : =\age{t q^n -1 } - q \age{t q^{n-1} -1} \in \Z.
\]
We define the \emph{$n$-th round up} and the \emph{$n$-th truncation} of $t$ in base $q$ by 
\begin{eqnarray*}
\langle t \rangle ^{n, q} &: =& \age{t q^n }/q^n \in \Q \textup{, and}\\
\qadic{t}{n} &: =& \age{ t q^n -1} / q^n \in \Q,
\end{eqnarray*}
respectively.
\end{defn}

\begin{lem}\label{qadic}
Let $q \ge 2$ be an integer, $t>0$ be a real number and $n \in \Z$ be an integer.
Then the following hold.
\begin{enumerate}
\item $0 \le t^{(n)} <q$.
\item $t^{(n)}$ is eventually zero for $n \ll 0$ and is not eventually zero for $n \gg 0$.
\item $t = \sum_{m \in \Z} t^{(m)} \cdot q^{-m}$.
\item $\qadic{t}{n} = \sum_{m \le n} t^{(m)} \cdot q^{-m}$.
\item The sequence $\{ \langle t \rangle^{n,q} \}_{n \in \Z}$ is a descending chain which convergences to $t$.
\item The sequence $\{ \qadic{t}{n} \}_{n \in \Z}$ is an ascending chain which converges to $t$. 
\end{enumerate}
\end{lem}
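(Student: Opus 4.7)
Write $a_n := \lceil tq^n - 1 \rceil$ and $b_n := \lceil tq^n \rceil$, so that $\qadic{t}{n} = a_n/q^n$ and $\langle t \rangle^{n,q} = b_n/q^n$. The plan rests on the single elementary identity $\lceil x - 1 \rceil = \lceil x \rceil - 1$ for all $x \in \R$, which gives $a_n = b_n - 1$ and the sharp enclosures
\[
a_n < tq^n \le a_n + 1, \qquad b_n - 1 < tq^n \le b_n.
\]
All six items will be read off from these inequalities and a one-line telescoping argument.

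I would first dispose of (1) and of the two monotonicity/convergence statements (5), (6). For (1), the lower bound $t^{(n)} = a_n - q a_{n-1} \ge 0$ follows because $q a_{n-1} < tq^n$ forces $q a_{n-1} \le a_n$ (both sides are integers and $a_n$ is the largest integer strictly below $tq^n$); the upper bound $t^{(n)} < q$ follows from $q(a_{n-1} + 1) \ge tq^n > a_n$. The inequality $q a_{n-1} \le a_n$ is precisely the ascending property in (6), and the enclosure $t - q^{-n} \le a_n/q^n < t$ gives the convergence. For (5), descending amounts to $b_{n+1} \le q b_n$, which is immediate since $tq^{n+1} \le q b_n$ and the right side is an integer; the enclosure $t \le b_n/q^n < t + q^{-n}$ gives convergence.

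Next I would handle the telescoping identities (3) and (4) uniformly: using the definition $t^{(m)} = a_m - q a_{m-1}$,
\[
\sum_{m=M_1}^{M_2} t^{(m)} q^{-m} \;=\; \sum_{m=M_1}^{M_2} a_m q^{-m} - \sum_{m=M_1}^{M_2} a_{m-1} q^{-(m-1)} \;=\; \frac{a_{M_2}}{q^{M_2}} - \frac{a_{M_1 - 1}}{q^{M_1 - 1}}.
\]
For (4) take $M_2 = n$ and let $M_1 \to -\infty$: since $tq^{M_1 - 1} < 1$ for $M_1$ sufficiently negative, one has $a_{M_1 - 1} = 0$, so the sum equals $a_n/q^n = \qadic{t}{n}$. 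For (3) additionally let $M_2 \to \infty$ and use $a_{M_2}/q^{M_2} \to t$ from the enclosure above.

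Finally, (2) follows from facts already used: the observation that $a_n = 0$ once $tq^n < 1$ gives $t^{(n)} = 0$ for $n \ll 0$; conversely, if $t^{(n)} = 0$ for all $n \ge N$, then $a_n = q^{n-N} a_N$ for $n \ge N$, so $\qadic{t}{n} = a_N/q^N$ would be constant, contradicting $\qadic{t}{n} \to t$ together with the \emph{strict} inequality $a_n/q^n < t$ from part~(6). I do not anticipate a genuine obstacle here; the only mild subtlety is being careful about the sign convention in $\lceil\cdot - 1\rceil$ so that the inequalities $a_n < tq^n \le a_n + 1$ are strict on the correct side (this is what ensures the ascending sequence in (6) converges to $t$ from below while never reaching $t$, giving the non-eventually-zero half of (2)).
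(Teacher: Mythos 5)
Your proof is correct, and it fills in the elementary verification that the paper dispatches with ``these all follow easily from the definitions.'' The telescoping via $t^{(m)} = a_m - qa_{m-1}$, the enclosures $a_n < tq^n \le a_n+1$ and $b_n-1 < tq^n \le b_n$, and the integer-gap arguments for (1), (5), (6) are exactly the intended elementary route. The only place where you genuinely diverge from the paper's hint is the ``not eventually zero'' half of (2): the paper observes that when $t = s/q^m$ one has $t^{(n)} = q-1$ for $n > m$ (implicitly suggesting a case split on whether $t$ is a $q$-adic rational), whereas you give a uniform argument by contradiction, using that $\{a_n/q^n\}$ increases to $t$ but never attains it, so the digits cannot terminate. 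Your version is cleaner in that it avoids the case split entirely and relies only on facts you have already established; the paper's remark has the advantage of isolating the one case the reader might worry about (the convention at $q$-adic rationals). Both are valid, and nothing is missing.
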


\begin{proof}
These all follow easily from the definitions.
For the assertion in (2), we note that if $t=s/q^m$ for some integers $s$ and $m$, then we have $t^{(n)}=q-1$ for all $n > m$.
\end{proof}

\begin{defn}\label{variants1}
Let $(X=\Spec R,\D, \fa_\bullet^{t_\bullet}=\prod_i \fa_i^{t_i})$ be a triple such that $t_i>0$ for all $i$ and $e>0$ be an integer such that $(p^e-1)(K_X + \D)$ is Cartier.
For every integer $n \ge 0$, we define
\begin{eqnarray*}
\tau^{en}_{+} (R, \D, \fa_\bullet^{t_\bullet}) &: =& \phi^{en}_\D (F^{en}_*(\fa_1^{\age{t_1 p^{en}}} \cdots \fa_m^{\age{t_m p^{en}}} \cdot \tau(R, \D) )) \subseteq R \textup{ and} \\
\tau^{en}_{-} (R, \D, \fa_\bullet^{t_\bullet}) &: =& \phi^{en}_\D (F^{en}_*(\fa_1^{\age{t_1 p^{en} -1 }} \cdots \fa_m^{\age{t_m p^{en} -1}}\cdot \tau(R, \D) )) \subseteq R.
\end{eqnarray*}
\end{defn}

\begin{eg}
Let $(X=\Spec R, \D, \fa^t)$ be a triple such that $t>0$ and that $\fa$ is a principal ideal and let $e$ be a positive integer such that $(p^e-1)(K_X+\D)$ is Cartier.
Then it follows from \cite[Lemma 5.4]{BSTZ} that 
\begin{eqnarray*}
\tau^{en}_{+}(R, \D, \fa^t) &=& \tau(R, \D, \fa^{\langle t \rangle^{n,q}}) \textup{, and}\\
\tau^{en}_{-}(R,\D, \fa^t)&=&\tau(R, \D, \fa^{\qadic{t}{n}}).
\end{eqnarray*}

By Proposition \ref{disc rat}, the sequence $\{ \tau_{+}^{en} (R, \D, \fa^t) \}_n$ is an ascending chain of ideals which converges to $\tau(R, \D, \fa^t)$ and the sequence $\{ \tau_{-}^{en} (R, \D, \fa^t) \}_n$ is a descending chain of ideals which eventually stabilizes.
\end{eg}

\begin{prop}[basic properties]
Let $(R, \D, \fa_\bullet^{t_\bullet})$ and $e$ be as in Definition \ref{variants1}.
Then the following hold.
\label{lower test basic}
\begin{enumerate}
\item \textup{(\cite[Lemma 3.21]{BSTZ})} The sequence $\{ \tau^{en}_{+}(R, \D, \fa_\bullet^{t_\bullet}) \}_{n \ge 0}$ is an ascending chain which converges to the test ideal $\tau(R, \D, \fa_\bullet^{t_\bullet})$.
\item If $t_1 > 1$, then we have 
	\[\tau^{en}_{+}(R, \D, \fa_1^{t_1} \cdots \fa_m^{t_m}) \supseteq \fa_1 \cdot \tau^{en}_{+}(R, \D, \fa_1^{t_1-1} \cdots \fa_m^{t_m}).\]
Moreover, if $t_1 > \mu_R(\fa_1)$, then we have 
	\[\tau^{en}_{+}(R, \D, \fa_1^{t_1} \cdots \fa_m^{t_m}) = \fa_1 \cdot \tau^{en}_{+}(R, \D, \fa_1^{t_1-1} \cdots \fa_m^{t_m}).\]
\item $\phi^e_\D(F^e_*(\tau^{en}_{+}(R, \D, \fa_\bullet^{p^e \cdot t_\bullet} )))=\tau^{e(n+1)}_{+}(R, \D, \fa_\bullet^{t_\bullet})$, where we set $\fa_\bullet^{p^e \cdot t_\bullet} : = \prod_i \fa_i^{p^e t_i}$.
\end{enumerate}
\end{prop}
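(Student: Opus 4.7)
The strategy is to manipulate the defining expression
\[
\tau^{en}_{+}(R,\D,\fa_\bullet^{t_\bullet}) = \phi^{en}_\D\bigl(F^{en}_*\bigl(\prod_i \fa_i^{\age{t_i p^{en}}} \cdot \tau(R,\D)\bigr)\bigr)
\]
using three ingredients: the projection formula $\phi(F^{en}_*(f^{p^{en}} \cdot M)) = f \cdot \phi(F^{en}_*(M))$ for any $R$-linear $\phi$; the composition rule $\phi^{e(n+1)}_\D = \phi^e_\D \circ F^e_*\phi^{en}_\D$, valid up to an element of $F^{e(n+1)}_*R^\times$ that does not affect the image; and Skoda's identity (Lemma \ref{Skoda}). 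Part (1) is cited from \cite[Lemma 3.21]{BSTZ}, so I would only sketch the mechanism: the ascending inclusion $\tau^{en}_{+} \subseteq \tau^{e(n+1)}_{+}$ follows from the composition rule combined with the numerical inequality $\age{t_i p^{e(n+1)}} \le p^e \age{t_i p^{en}}$, the projection formula, and the identity $\phi^e_\D(F^e_*(\tau(R,\D))) = \tau(R,\D)$; convergence to $\tau(R,\D,\fa_\bullet^{t_\bullet})$ then follows by sandwiching between test ideals at exponents approaching $t_\bullet$ from above and invoking the right-continuity statement of Lemma \ref{test inc}(2).

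For part (2), the containment $\supseteq$ is just the projection formula applied once. Using $R$-linearity of $\phi^{en}_\D$ and the identity $\fa_1 \cdot F^{en}_*(M) = F^{en}_*(\fa_1^{[p^{en}]} \cdot M)$, I would rewrite
\[
\fa_1 \cdot \tau^{en}_{+}(R,\D,\fa_1^{t_1-1}\fa_2^{t_2}\cdots) = \phi^{en}_\D\bigl(F^{en}_*\bigl(\fa_1^{[p^{en}]} \cdot \fa_1^{\age{(t_1-1)p^{en}}} \cdot \prod_{i\ge 2}\fa_i^{\age{t_i p^{en}}} \cdot \tau(R,\D)\bigr)\bigr),
\]
and then apply $\fa_1^{[p^{en}]} \subseteq \fa_1^{p^{en}}$ together with the ceiling identity $p^{en} + \age{(t_1-1)p^{en}} = \age{t_1 p^{en}}$ (valid because $p^{en}$ is a positive integer) to land inside $\tau^{en}_{+}(R,\D,\fa_1^{t_1}\cdots)$. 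For the \emph{moreover} clause, the inclusion is promoted to equality via Lemma \ref{Skoda}(1) applied with $a=1$ and $b = \age{t_1 p^{en}} - p^{en}$: the hypothesis $t_1 > \mu_R(\fa_1)$ gives $\age{t_1 p^{en}} > p^{en}\mu_R(\fa_1)$, hence $b > p^{en}(\mu_R(\fa_1)-1)$, so Skoda produces the exact factorization $\fa_1^{\age{t_1 p^{en}}} = \fa_1^{[p^{en}]} \cdot \fa_1^{\age{(t_1-1)p^{en}}}$, upgrading the containment to an equality.

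Part (3) is the most direct: since $p^e t_i \cdot p^{en} = t_i p^{e(n+1)}$ exactly, the ceilings in the exponents match, so
\[
\tau^{en}_{+}(R,\D,\fa_\bullet^{p^e \cdot t_\bullet}) = \phi^{en}_\D\bigl(F^{en}_*\bigl(\prod_i \fa_i^{\age{t_i p^{e(n+1)}}} \cdot \tau(R,\D)\bigr)\bigr),
\]
and applying $\phi^e_\D \circ F^e_*$ to both sides and collapsing via $\phi^e_\D \circ F^e_*\phi^{en}_\D = \phi^{e(n+1)}_\D$ yields $\tau^{e(n+1)}_{+}(R,\D,\fa_\bullet^{t_\bullet})$ on the nose. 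The one point that needs care across all three parts is this composition rule, since $\phi^{e(n+1)}_\D$ is only determined up to an element of $F^{e(n+1)}_*R^\times$; both sides are generators of the rank-one $F^{e(n+1)}_*R$-module $\Hom_R(F^{e(n+1)}_*R(\age{(p^{e(n+1)}-1)\D}),R)$, and the unit ambiguity is harmless for comparing images of ideals (as already noted in the remark following the definition of $\phi^e_\D$). I regard this as the main technical subtlety, though it is a mild one rather than a genuine obstacle.
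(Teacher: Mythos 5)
Your proof is correct and follows essentially the same route as the paper: citing \cite[Lemma 3.21]{BSTZ} for (1), using the projection formula plus Lemma \ref{Skoda}(1) for the factorization $\fa_1^{\age{t_1 p^{en}}}=\fa_1^{[p^{en}]}\cdot\fa_1^{\age{(t_1-1)p^{en}}}$ for (2), and the composition identity $\phi^{e(n+1)}_\D=\phi^e_\D\circ F^e_*\phi^{en}_\D$ for (3). You merely spell out the details the paper compresses (including the ceiling identity, the inclusion $\fa_1^{[p^{en}]}\subseteq\fa_1^{p^{en}}$, and the harmlessness of the unit ambiguity in $\phi_\D^e$).
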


\begin{proof}
The proof of (1) follows as in the case when $m=1$, see \cite[Lemma 3.21]{BSTZ}.
If $t_1>\mu_R(\fa_1)$, then by Lemma \ref{Skoda} (1), we have $\fa_1^{\age{t_1 p^{en}} }=\fa_1^{[p^{en}]} \cdot \fa_1^{\age{(t_1-1) p^{en}}}$, which proves (2).
The assertion in (3) follows from the fact that $\phi^{e(n+1)}_\D=\phi^e_{\D} \circ F^e_* \phi^{en}_{\D}$ (\cite[Theorem 3.11 (e)]{Sch12}).
\end{proof}

\begin{defn}\label{stab exp}
Let $(R, \D, \fa_\bullet^{t_\bullet})$ and $e$ be as in Definition \ref{variants1}.
We define the \emph{stabilization exponent} of $(R, \D, \fa_\bullet^{t_\bullet}; e)$ by
\[
\stab(R, \D, \fa_\bullet^{t_\bullet} ; e) : = \min \{ n \ge 0 \mid \tau^{en}_{+}(R, \D, \fa_\bullet^{t_\bullet})= \tau(R, \D, \fa_\bullet^{t_\bullet}) \}.
\]
\end{defn}

\begin{prop}[basic properties]\label{stab basic}
Let $(R, \D, \fa_\bullet^{t_\bullet}= \prod_{i=1}^m \fa_i^{t_i} )$ and $e$ be as in Definition \ref{variants1}.
Then the following hold.
\begin{enumerate}
\item If $t_1 > \mu_R(\fa_1)$, then we have 
\[
\stab(R, \D, \fa_1^{t_1} \cdots \fa_m^{t_m}; e ) \le \stab(R, \D, \fa_1^{t_1-1} \cdots \fa_m^{t_m}; e ).\]
\item We have 
\[
\stab(R, \D, \fa_\bullet^{t_\bullet} ; e ) \le \stab(R, \D, \fa_\bullet^{p^e \cdot t_\bullet} ;e)+1.\]
\item If $t_i >\mu_R(\fa_i)$ and $(p^e-1) t_i \in \N$ for every $i$, then for any integer $n \ge 0$, the inequality $n \ge \stab(R, \D, \fa_\bullet^{t_\bullet} ; e )$ holds if and only if 
\[
\tau^{en}_{+}(R, \D, \fa_\bullet^{t_\bullet} )= \tau^{e(n+1)}_{+}(R, \D, \fa_\bullet^{t_\bullet}).
\]
\end{enumerate}
\end{prop}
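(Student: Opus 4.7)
For part (1), fix $n \ge \stab(R,\D,\fa_1^{t_1-1}\fa_2^{t_2}\cdots\fa_m^{t_m};e)$. Since $t_1 > \mu_R(\fa_1)$, Proposition \ref{lower test basic}(2) gives
\[
\tau_+^{en}(R,\D,\fa_1^{t_1}\cdots\fa_m^{t_m}) = \fa_1\cdot\tau_+^{en}(R,\D,\fa_1^{t_1-1}\cdots\fa_m^{t_m}).
\]
Taking the directed union of this identity over $n$ and using Proposition \ref{lower test basic}(1)---together with the fact that multiplication by a fixed ideal commutes with directed unions of ideals---shows that the same identity holds with $\tau_+^{en}$ replaced everywhere by $\tau$. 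Substituting $\tau_+^{en}(R,\D,\fa_1^{t_1-1}\cdots) = \tau(R,\D,\fa_1^{t_1-1}\cdots)$ (which holds by the choice of $n$) into the displayed equation yields $\tau_+^{en}(R,\D,\fa_1^{t_1}\cdots)=\tau(R,\D,\fa_1^{t_1}\cdots)$, i.e.\ $n \ge \stab(R,\D,\fa_1^{t_1}\cdots;e)$.

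For part (2), set $n := \stab(R,\D,\fa_\bullet^{p^e t_\bullet};e)$. Proposition \ref{lower test basic}(3) yields the identity $\tau_+^{e(n+1)}(R,\D,\fa_\bullet^{t_\bullet}) = \phi_\D^e(F_*^e\,\tau_+^{en}(R,\D,\fa_\bullet^{p^e t_\bullet}))$. Since $\phi_\D^e$ is $R$-linear and $F_*^e$ is exact, taking the directed union in this identity across all $n$ and invoking Proposition \ref{lower test basic}(1) produces $\tau(R,\D,\fa_\bullet^{t_\bullet}) = \phi_\D^e(F_*^e\,\tau(R,\D,\fa_\bullet^{p^e t_\bullet}))$. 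The choice of $n$ makes the inner ideal $\tau_+^{en}(R,\D,\fa_\bullet^{p^e t_\bullet})$ equal to $\tau(R,\D,\fa_\bullet^{p^e t_\bullet})$, so the two identities combine to give $\tau_+^{e(n+1)}(R,\D,\fa_\bullet^{t_\bullet}) = \tau(R,\D,\fa_\bullet^{t_\bullet})$, proving $\stab(R,\D,\fa_\bullet^{t_\bullet};e) \le n+1$.

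For part (3), write $J_m := \tau_+^{em}(R,\D,\fa_\bullet^{t_\bullet})$; by Proposition \ref{lower test basic}(1) this is an ascending chain with union $\tau(R,\D,\fa_\bullet^{t_\bullet})$. The ``only if'' direction is immediate because once $J_n = \tau$ the ascending chain cannot climb higher. For ``if'', the crucial step is the Skoda-type identity
\[
\tau_+^{em}(R,\D,\fa_\bullet^{p^e t_\bullet}) = \fb\cdot J_m, \qquad \fb:=\prod_{i=1}^m \fa_i^{(p^e-1)t_i},\quad m\ge 0,
\]
which I establish by iterating Proposition \ref{lower test basic}(2) on each factor $\fa_i^{p^e t_i}$, peeling off exactly $(p^e-1)t_i$ copies of $\fa_i$ (an integer by the hypothesis $(p^e-1)t_i\in\N$); the current exponent of $\fa_i$ immediately before each peel is at least $t_i + 1 > \mu_R(\fa_i)$, so the equality case of Proposition \ref{lower test basic}(2) applies at every step. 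Combining this identity with Proposition \ref{lower test basic}(3) gives the recursion $J_{m+1}=\phi_\D^e(F_*^e(\fb J_m))$ for every $m \ge 0$. Hence $J_n = J_{n+1}$ forces $J_{n+2}=\phi_\D^e(F_*^e(\fb J_{n+1}))=\phi_\D^e(F_*^e(\fb J_n))=J_{n+1}$, and by induction $J_m = J_n$ for every $m \ge n$; taking the union yields $\tau(R,\D,\fa_\bullet^{t_\bullet}) = J_n$, proving (3).

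The principal technical point throughout is the Skoda-type peeling in part (3); careful bookkeeping of the exponents is needed to ensure that the equality case of Proposition \ref{lower test basic}(2) can be invoked at every peel, and the strict inequality $t_i > \mu_R(\fa_i)$ (rather than $\ge$) is exactly what guarantees this. Once this identity is in hand, part (3) is a short induction; parts (1) and (2) are essentially direct transcriptions of Proposition \ref{lower test basic}(2) and (3) respectively to the limit.
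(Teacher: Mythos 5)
Your proposal is correct and follows essentially the same route as the paper. For part (3) you make explicit exactly the Skoda-type peeling (via Proposition \ref{lower test basic}(2)) combined with the Frobenius recursion (via Proposition \ref{lower test basic}(3)) that the paper's displayed computation uses; for parts (1) and (2) the paper merely says they ``follow from'' the corresponding items of Proposition \ref{lower test basic}, and the pass-to-the-union arguments you supply are precisely the intended details.
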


\begin{proof}
The assertions in (1) and (2) follow from Proposition \ref{lower test basic} (2) and (3), respectively.

For (3), it follows from Proposition \ref{lower test basic} (2) and (3) that
\begin{eqnarray*}
\tau^{e(n+1)}_{+}( R, \D, \fa_\bullet^{t_\bullet}) &=& \phi_\D^e(F^e_*(\tau^{en}_{+}(R, \D,\fa_1^{p^e t_1} \cdots \fa_m^{p^e t_m}) ))\\
&=& \phi^e_\D(F^e_*(\fa_1^{(p^e-1) t_1} \cdots \fa_m^{(p^e-1) t_m} \cdot \tau^{en}_{+}(R, \D, \fa_\bullet^{t_\bullet}))). 
\end{eqnarray*}
Therefore, if $\tau^{en}_{+}(R, \D, \fa_\bullet^{t_\bullet} )= \tau^{e(n+1)}_{+}(R, \D, \fa_\bullet^{t_\bullet})$, then we have $\tau^{e(n+1)}_{+}(R, \D, \fa_\bullet^{t_\bullet})= \tau^{e(n+2)}_{+}(R, \D, \fa_\bullet^{t_\bullet})$, which completes the proof.
\end{proof}

\begin{prop}\label{uniform stab exp}
Let $(X=\Spec R, \D, \fa_\bullet = \prod_i \fa_i)$ be a triple, $e$ be a positive integer such that $(p^e-1)(K_X+\D)$ is Cartier.
We define
\begin{equation*}
\ustab(R, \D, \fa_\bullet ; e): = \sup_{t_1,\dots, t_m} \{ \stab(R, \D, \fa_\bullet^{t_\bullet} ; e)\},
\end{equation*}
where every $t_i$ runs through all positive rational numbers such that $(p^e-1)t_i \in \N$.
Then we have $\ustab(R, \D, \fa_\bullet ;e) < \infty$.
Moreover, for every integer $l \ge 0$ and rational numbers $t_1, \dots, t_m>0$ such that $p^{el}(p^e-1) t_i \in \N$, we have 
\[
\stab(R, \D, \fa_\bullet^{t_\bullet} ;e) \le \ustab(R, \D, \fa_\bullet ; e ) + l.
\]
\end{prop}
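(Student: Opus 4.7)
The plan is to deduce both claims directly from parts (1) and (2) of Proposition \ref{stab basic}, together with the basic observation that for each individual admissible tuple, $\stab(R, \D, \fa_\bullet^{t_\bullet}; e)$ is itself finite. The latter is immediate from Proposition \ref{lower test basic} (1): the ascending chain $\{\tau^{en}_{+}(R, \D, \fa_\bullet^{t_\bullet})\}_n$ converges to the test ideal $\tau(R, \D, \fa_\bullet^{t_\bullet})$, and since $R$ is Noetherian an ascending chain of ideals must eventually stabilize, so the minimum in the definition of $\stab$ exists.

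For finiteness of $\ustab(R, \D, \fa_\bullet; e)$, I reduce the supremum to one taken over a finite set. Given any admissible tuple $(t_1, \dots, t_m)$ with $t_i > 0$ and $(p^e - 1) t_i \in \N$, observe that whenever $t_i > \mu_R(\fa_i)$ the tuple obtained by replacing $t_i$ with $t_i - 1$ is still admissible, because $(p^e - 1)(t_i - 1) = (p^e - 1) t_i - (p^e - 1) \in \N$ and $t_i - 1 > 0$ (using $\mu_R(\fa_i) \ge 1$ when $\fa_i \ne 0$; the case $\fa_i = 0$ is trivial by the convention $\tau(R, \D, \fa_\bullet^{t_\bullet}) = 0$). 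Proposition \ref{stab basic} (1), applied in the $i$-th coordinate, then gives
\[
\stab(R, \D, \fa_1^{t_1} \cdots \fa_i^{t_i} \cdots \fa_m^{t_m}; e) \;\le\; \stab(R, \D, \fa_1^{t_1} \cdots \fa_i^{t_i - 1} \cdots \fa_m^{t_m}; e).
\]
Iterating coordinate by coordinate, after finitely many subtractions one arrives at a reduced tuple $(t_1', \dots, t_m')$ with $t_i' \in (\mu_R(\fa_i) - 1,\ \mu_R(\fa_i)]$, $(p^e - 1)t_i' \in \N$, and $\stab(R, \D, \fa_\bullet^{t_\bullet}; e) \le \stab(R, \D, \fa_\bullet^{t_\bullet'}; e)$. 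Only finitely many such reduced tuples exist (each coordinate admits at most $p^e - 1$ values), so the supremum defining $\ustab$ is bounded by a finite maximum of finite numbers and is therefore finite.

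For the quantitative bound, I iterate Proposition \ref{stab basic} (2): given $l \ge 0$ and $t_i > 0$ with $p^{el}(p^e - 1) t_i \in \N$, applying part (2) a total of $l$ times yields
\[
\stab(R, \D, \fa_\bullet^{t_\bullet}; e) \;\le\; \stab(R, \D, \fa_\bullet^{p^{el} \cdot t_\bullet}; e) + l.
\]
Since by hypothesis $(p^e - 1)(p^{el} t_i) \in \N$ for every $i$, the scaled tuple $(p^{el} t_1, \dots, p^{el} t_m)$ lies in the indexing set for $\ustab$, so $\stab(R, \D, \fa_\bullet^{p^{el} \cdot t_\bullet}; e) \le \ustab(R, \D, \fa_\bullet; e)$, giving the desired inequality.

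There is no significant obstacle, as Proposition \ref{stab basic} does the real work; only a combinatorial descent argument and a straightforward iteration remain. The only minor bookkeeping is to verify at every step that the modified tuple is still admissible (positive entries, correct denominator), which I have recorded above.
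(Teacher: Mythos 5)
Your proof is correct and follows essentially the same route as the paper: use Proposition~\ref{stab basic}~(1) to reduce the supremum over all admissible exponent tuples to a supremum over a finite set of bounded tuples, and iterate Proposition~\ref{stab basic}~(2) for the quantitative estimate. One small imprecision: after iterating the descent you claim the reduced tuple satisfies $t_i' \in (\mu_R(\fa_i)-1,\ \mu_R(\fa_i)]$, but a coordinate that already starts at or below $\mu_R(\fa_i)-1$ is never touched, so you can only conclude $t_i' \in (0,\ \mu_R(\fa_i)]$; the indexing set is still finite (of size $(p^e-1)\mu_R(\fa_i)$ per coordinate rather than $p^e-1$), so the conclusion is unaffected.
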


\begin{proof}
By Proposition \ref{stab basic} (1), we have
\begin{equation*}
\ustab(R, \D, \fa_\bullet ; e )= \sup_{t_1, \dots, t_m} \{ \stab(R, \D, \fa_\bullet^{t_\bullet} ; e )  \},
\end{equation*}
where every $t_i$ runs through all positive rational numbers such that $(p^e-1)t_i \in \N$ and $t_i \le \mu_R(\fa_i)$.
Hence we have $\ustab(R, \D, \fa_\bullet ; e )<\infty$.

The second statement follows from Proposition \ref{stab basic} (2).
\end{proof}

We next consider the sequence of ideals $\{\tau^{en}_{-} (R, \D, \fa_\bullet^{t_\bullet}) \}_n$.
In general, the sequence $\{\tau^{en}_{-} (R, \D, \fa_\bullet^{t_\bullet}) \}_n$ may not be a descending chain.
In order to make a descending chain, we mix the definitions of $\tau_+$ and $\tau_-$, and define the new variants of test ideals as below.
In fact, we later see that we can make a descending chain by using these ideals under some mild assumptions (Proposition \ref{upper test dc}).

\begin{defn}\label{variants2}
Let $(R, \D, \fa_\bullet^{t_\bullet}= \prod_i \fa_i^{t_i} )$ and $e$ be as in Definition \ref{variants1}, $\q \subseteq R$ be an ideal, and $n , u \ge 0$ be integers.
We define
\[
\tau^{n,u}_{e,\q}(R,\D, \fa_\bullet^{t_\bullet} ) : = \phi^{e(n+u)}_\D (F^{e(n+u)}_*(\fa_1^{p^{eu} \age{t_1 p^{en}-1}} \cdots \fa_m^{p^{eu} \age{t_m p^{en}-1}} \cdot \q) ). 
\]
When $\q = \tau(R,\D)$, we denote it by $\newtau{n}{u} (R, \D, \fa_\bullet^{t_\bullet} )$. 
\end{defn}

\begin{prop}[basic properties]\label{upper test basic}
Let $(X= \Spec R, \D, \fa_\bullet^{t_\bullet} = \prod_{i=1}^m \fa_i^{t_i})$ be a triple such that $t_i>0$ for every $i$ and $(q-1)(K_X+\D)$ is Cartier for some $q=p^e$, $\q \subseteq R$ be an ideal and $n,u \ge 0 $ be integers.
Then the following hold.
\begin{enumerate}
\item For real numbers $0<s_i \le t_i$, we have $\tau^{n,u}_{e,\q}(R, \D, \fa_\bullet^{s_\bullet} ) \supseteq \tau^{n,u}_{e,\q}(R, \D, \fa_\bullet^{t_\bullet}).$
Moreover, if $\qadic{t_i}{n} < s_i \le t_i$ for every $i$, then we have $\tau^{n,u}_{e,\q}(R, \D, \fa_\bullet^{s_\bullet}) = \tau^{n,u}_{e,\q}(R, \D, \fa_\bullet^{t_\bullet} ) $.
\item For ideals $\fb_i \subseteq \fa_i$ and $\q' \subseteq \q$, we have
$\tau^{n,u}_{e,\q'}(R, \D, \fb_\bullet^{t_\bullet}) \subseteq \tau^{n,u}_{e,\q}(R, \D, \fa_\bullet^{t_\bullet})$.
\item If $\fa_1 \equiv \fb_1 \mod J$ for some ideal $J$ and $\fa_i=\fb_i$ for every $i \ge 2$, then we have
\[
\tau^{n,u}_{e,\q}(R,\D, \fa_\bullet^{t_\bullet}) \equiv \tau^{n,u}_{e,\q}(R,\D, \fb_\bullet^{t_\bullet} ) \mod{\tau^{n,u}_{e,J \cdot \q} (R, \D, \prod_{i=2}^m \fa_i^{t_i})}.
\]
If $\q \equiv \q' \mod{J}$ for some ideals $\q'$ and $J$, then we have 
\[
\tau^{n,u}_{e,\q}(R,\D, \fa_\bullet^{t_\bullet}) \equiv \tau^{n,u}_{e,\q'}(R,\D, \fa_\bullet^{t_\bullet}) \mod{ \tau^{n,u}_{e,J}(R,\D, \fa_\bullet^{t_\bullet})}.
\]
\item If $\q= \fa_{m+1}^{q^u \age{t_{m+1} q^n-1} } \tau(R,\D)$, then we have $\tau^{n,u}_{e,\q}(R,\D, \fa_\bullet^{t_\bullet} ) = \tau^{n,u}_{e}(R,\D, \prod_{i=1}^{m+1} \fa_i^{t_i})$.
\item If $t_1 > 1$, then we have $\tau^{n,u}_{e,\q}(R,\D, \fa_\bullet^{t_\bullet}) \supseteq \fa_1 \cdot \tau^{n,u}_{e,\q}(R,\D, \fa_1^{t_1-1} \cdots \fa_m^{t_m})$.
Moreover, if $t_1 > \mu_R(\fa)+(1/q^n)$, then we have 
\[
\tau^{n,u}_{e,\q}(R,\D, \fa_\bullet^{t_\bullet}) = \fa_1 \cdot \tau^{n,u}_{e,\q}(R,\D, \fa_1^{t_1-1} \cdots \fa_m^{t_m}).
\]
\item $\phi^e_\D (F^e_*(\tau^{n,u}_{e,\q}(R,\D, \fa_\bullet^{p^e \cdot t_\bullet}) )) =\tau^{n+1,u}_{e,\q}(R,\D, \fa_\bullet^{t_\bullet})$. 
\item The sequence $\{ \tau^{n,u}_{e}(R,\D, \fa_\bullet^{t_\bullet}) \}_{u \in \N}$ is an ascending chain of ideals which converges to $\tau(R, \D, \prod_i \fa_i^{\qadic{t_i}{n}})$.
\item If $u \ge \ustab(R, \D, \fa_\bullet ; e)$, then we have 
\[
\newtau{n}{u} (R, \D,  \fa_\bullet^{t_\bullet}) =\tau(R, \D,  \prod_i \fa_i^{\qadic{t_i}{n}})
\]
 for every $n$.
\item Assume that $q^{u-1} \ge \mu_R(\fa_i)$ and the $n$-th digit ${t_i}^{(n)}$ of $t_i$ in base $q$ is non-zero for every $i$.
Then we have $\tau^{n,u}_{e,\q}(R, \D, \fa_\bullet^{t_\bullet})= \tau^{n-1, u}_{e, \q'}(R,\D, \fa_\bullet^{t_\bullet})$, where $\q' : = \phi^e_\D(F^e_*(\prod_i \fa_i^{q^{u} \cdot {t_i}^{(n)}} \q))$.
\end{enumerate}
\end{prop}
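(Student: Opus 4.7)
All nine items are properties of the single formula
\[
\newtau{n}{u}_{\q}(R,\D, \fa_\bullet^{t_\bullet})
  = \phi^{e(n+u)}_\D\bigl(F^{e(n+u)}_*\bigl(\textstyle\prod_i \fa_i^{p^{eu}\age{t_i p^{en}-1}} \cdot \q\bigr)\bigr),
\]
so the plan is to push every statement back to a direct identity or inclusion among the exponents, and then apply one of the following three tools repeatedly: the projection formula $\fa \cdot \phi^e_\D(F^e_* M) = \phi^e_\D(F^e_*(\fa^{[q]}M))$; the composition rule $\phi^{e(a+b)}_\D = \phi^{ea}_\D \circ F^{ea}_* \phi^{eb}_\D$ of \cite[Theorem~3.11(e)]{Sch12}; and Lemma~\ref{Skoda}. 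With $q = p^e$ I will freely use the integrality of $q^n\qadic{t_i}{n}$ and the relations $\age{t_i q^n-1} = q\age{t_i q^{n-1}-1}+ t_i^{(n)}$ and $\age{t_i q^n-1}+q^n = \age{(t_i+1)q^n - 1}$.

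\textbf{The straightforward items.} For (1), monotonicity of $t\mapsto \age{t q^n-1}$ gives the inclusion, and for the equality I use that $\qadic{t_i}{n} q^n = \age{t_i q^n-1}$, so $\qadic{t_i}{n}<s_i\le t_i$ forces $\age{s_i q^n-1}=\age{t_i q^n-1}$. Item (2) is immediate from monotonicity in each input. For (3), $\fa_1\equiv\fb_1\!\!\mod J$ gives $\fa_1^N\subseteq\fb_1^N+J$ for every $N\ge 0$; plugging $N=q^u\age{t_1 q^n-1}$ and applying $\phi^{e(n+u)}_\D$ yields the first congruence, and the same argument applied in the last factor yields the $\q$-congruence. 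Item (4) is simply absorbing the last factor into $\q$. Item (6) follows by composing $\phi^e_\D\circ F^e_*\phi^{e(n+u)}_\D = \phi^{e(n+u+1)}_\D$ and noting $\age{p^e t_i q^n - 1} = \age{t_i q^{n+1}-1}$.

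\textbf{The items requiring Skoda.} For (5), I write $\age{t_1 q^n-1} = q^n + \age{(t_1-1)q^n-1}$ to factor $\fa_1^{q^u\age{t_1 q^n-1}} = \fa_1^{q^{n+u}}\cdot \fa_1^{q^u\age{(t_1-1)q^n-1}}$; since $\fa_1^{q^{n+u}}\supseteq\fa_1^{[q^{n+u}]}$, the projection formula gives the inclusion. For equality under $t_1>\mu_R(\fa_1)+1/q^n$, Lemma~\ref{Skoda}(1) upgrades $\fa_1^{q^{n+u}}\cdot(\,\cdot\,)$ to $\fa_1^{[q^{n+u}]}\cdot(\,\cdot\,)$ (the required bound $q^u\age{(t_1-1)q^n-1}>q^{n+u}(\mu_R(\fa_1)-1)$ is exactly this hypothesis). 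For (7), the key numerical identity $p^{eu}\age{t_i q^n-1} = p^{e(n+u)}\qadic{t_i}{n}$ identifies $\newtau{n}{u}(R,\D,\fa_\bullet^{t_\bullet}) = \tau^{e(n+u)}_+(R,\D,\fa_\bullet^{\qadic{t_\bullet}{n}})$, whence Proposition~\ref{lower test basic}(1) finishes the job; item (8) is then a direct application of Proposition~\ref{uniform stab exp} with $l=n$, since $p^{en}(p^e-1)\qadic{t_i}{n}\in\N$.

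\textbf{The main obstacle: item (9).} This is the item with the most bookkeeping. Expanding $\tau^{n-1,u}_{e,\q'}$ by splitting the outermost $\phi^e_\D$ off via the composition rule and using the projection formula, I get
\[
\tau^{n-1,u}_{e,\q'}(R,\D,\fa_\bullet^{t_\bullet})
  = \phi^{e(n+u)}_\D\Bigl(F^{e(n+u)}_*\bigl(\bigl(\textstyle\prod_i \fa_i^{q^u\age{t_i q^{n-1}-1}}\bigr)^{[q]} \cdot \prod_j \fa_j^{q^u t_j^{(n)}}\cdot\q\bigr)\Bigr),
\]
whereas $\newtau{n}{u}_{\q}(R,\D,\fa_\bullet^{t_\bullet})$, using $\age{t_i q^n-1} = q\age{t_i q^{n-1}-1} + t_i^{(n)}$, is
\[
\phi^{e(n+u)}_\D\Bigl(F^{e(n+u)}_*\bigl(\textstyle\prod_i \fa_i^{q^{u+1}\age{t_i q^{n-1}-1} + q^u t_i^{(n)}}\cdot\q\bigr)\Bigr).
\]
The two agree if I can show, factor by factor, that
\[
\fa_i^{q^{u+1}\age{t_i q^{n-1}-1} + q^u t_i^{(n)}}
  = \bigl(\fa_i^{q^u\age{t_i q^{n-1}-1}}\bigr)^{[q]}\cdot \fa_i^{q^u t_i^{(n)}}.
\]
The inclusion $\supseteq$ is automatic; for $\subseteq$, Lemma~\ref{Skoda}(1) applied with $p^e=q$, $a=q^u\age{t_i q^{n-1}-1}$, $b=q^u t_i^{(n)}$ requires $b>q(\mu_R(\fa_i)-1)$, i.e.\ $q^{u-1}t_i^{(n)}>\mu_R(\fa_i)-1$, which is guaranteed by the hypotheses $q^{u-1}\ge\mu_R(\fa_i)$ and $t_i^{(n)}\ge 1$ (the latter being exactly the assumption that the $n$-th digit is non-zero). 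The hardest part of the proof is not any single computation, but checking that these integrality/Skoda-bound conditions are exactly the hypotheses imposed in each item, so that the manipulations produce equalities rather than only inclusions.
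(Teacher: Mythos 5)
Your proposal is correct and follows the same route as the paper: items (1)--(4) and the setup for (8) by direct manipulation of the defining formula, items (5)--(7) by reducing to Proposition~\ref{lower test basic} (your identification $\newtau{n}{u} = \tau^{e(n+u)}_{+}(R,\D,\fa_\bullet^{\qadic{t_\bullet}{n}})$ for (7)--(8) is exactly the intended link), and item (9) via the projection formula plus Lemma~\ref{Skoda}(1). Your writeup supplies the bookkeeping (notably the exact Skoda thresholds for (5) and (9)) that the paper's one-line proof leaves to the reader, but it is not a different argument.
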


\begin{proof}
The assertions in (1), (2), (3), (4) and (8) follow easily from the definitions.
The assertions in (5), (6) and (7) follow from Proposition \ref{lower test basic}.
The assertion in (9) follows from Lemma \ref{Skoda} (1).
\end{proof}

\begin{prop}\label{upper test dc}
Let $(X=\Spec R,\D, \fa_\bullet^{t_\bullet})$ be a triple such that $t_i>0$ for every $i$ and $(q-1)(K_X+\D)$ is Cartier for some $q=p^e$, and $u>0$ be an integer such that $q^{u-1} \ge \max_i \mu_R(\fa_i)$.
Assume that $q(q-1) t_i \in \N$ for every $i$.
Then the sequence $\{ \newtau{n}{u}(R, \D,  \fa_\bullet^{t_\bullet} ) \}_{n \ge 1}$ is a descending chain of ideals.
\end{prop}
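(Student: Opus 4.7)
The plan is to combine the recursion in Proposition~\ref{upper test basic}~(6) with the digit-descent identity Proposition~\ref{upper test basic}~(9); the decisive ingredient is that the hypothesis $q(q-1) t_i \in \N$ forces the $q$-adic digit $t_i^{(n)}$ to be nonzero for every $n \ge 2$. Granting this, the desired inclusion $\newtau{n+1}{u}(R,\D,\fa_\bullet^{t_\bullet}) \subseteq \newtau{n}{u}(R,\D,\fa_\bullet^{t_\bullet})$ drops out at once from parts~(6), (9), and the monotonicity of part~(2).

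The first step I would carry out is the digit claim. Writing $t_i = N_i/(q(q-1))$ with $N_i \in \N_{>0}$, I split on whether $(q-1) \mid N_i$. In the first case, $t_i = N_i'/q$ and $t_i q^n \in \N$ for every $n \ge 1$, so the digit formula $t^{(n)} = \age{tq^n - 1} - q\age{tq^{n-1}-1}$ collapses to $t_i^{(n)} = q-1$ for all $n \ge 2$. In the second case, setting $r_i := N_i \bmod (q-1) \in \{1, \ldots, q-2\}$ and $N_i = (q-1) k_i + r_i$, the number $t_i q^n$ is never an integer for $n \ge 1$, hence $\age{t_i q^n - 1} = \lfloor t_i q^n \rfloor = k_i q^{n-1} + r_i(q^{n-1} - 1)/(q-1)$, and an elementary telescoping yields $t_i^{(n)} = r_i \ne 0$ for every $n \ge 2$.

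For the main computation, fix $n \ge 1$. By Proposition~\ref{upper test basic}~(6),
\[
\newtau{n+1}{u}(R, \D, \fa_\bullet^{t_\bullet}) = \phi^e_\D\bigl(F^e_*(\newtau{n}{u}(R, \D, \fa_\bullet^{q t_\bullet}))\bigr).
\]
Since $(q t_i)^{(n)} = t_i^{(n+1)}$ and $n + 1 \ge 2$, the digit claim shows that the $n$-th $q$-adic digit of $q t_i$ is nonzero for every $i$, while $q^{u-1} \ge \mu_R(\fa_i)$ is the hypothesis; Proposition~\ref{upper test basic}~(9) therefore applies to the inner ideal and gives
\[
\newtau{n}{u}(R, \D, \fa_\bullet^{q t_\bullet}) = \tau^{n-1, u}_{e, \q'}(R, \D, \fa_\bullet^{q t_\bullet}),
\]
where $\q' := \phi^e_\D\bigl(F^e_*\bigl(\prod_i \fa_i^{q^u t_i^{(n+1)}} \tau(R, \D)\bigr)\bigr)$. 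Composing $\phi^e_\D \circ F^e_*$ with the outer trace and using the identity $\age{q t_i q^{n-1} - 1} = \age{t_i q^n - 1}$ identifies $\newtau{n+1}{u}(R, \D, \fa_\bullet^{t_\bullet})$ with $\tau^{n, u}_{e, \q'}(R, \D, \fa_\bullet^{t_\bullet})$. Since $\q' \subseteq \phi^e_\D(F^e_*(\tau(R, \D))) \subseteq \tau(R, \D)$ by $(F, \D)$-compatibility of the test ideal, Proposition~\ref{upper test basic}~(2) concludes $\tau^{n, u}_{e, \q'}(R, \D, \fa_\bullet^{t_\bullet}) \subseteq \tau^{n, u}_{e, \tau(R, \D)}(R, \D, \fa_\bullet^{t_\bullet}) = \newtau{n}{u}(R, \D, \fa_\bullet^{t_\bullet})$, completing the descent.

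The main obstacle, and the reason for the specific combinatorial hypothesis, is precisely the digit claim. Without $q(q-1) t_i \in \N$ the digit $t_i^{(n+1)}$ can vanish, and the Skoda-style identity underlying~(9) then loses an irreducible factor $\fa_i^{\mu_R(\fa_i)}$ that the trace cannot recover; indeed, a direct check for $R = k[x,y]$, $\fa = (x,y)$, $q = 3$ shows $\phi^e_\D(F^e_*(\fa^{q \mu_R(\fa)})) = \fa \not\subseteq \fa^{\mu_R(\fa)} = \fa^2$. The hypothesis is exactly what prevents a zero $(n+1)$-th digit for $n \ge 1$ and thereby makes the descent uniform in $n$.
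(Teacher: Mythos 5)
Your proof is correct and takes essentially the same route as the paper: establish that the $n$-th digit $t_i^{(n)}$ is nonzero for $n \ge 2$ and then invoke parts (2) and (9) of Proposition~\ref{upper test basic}. The only cosmetic difference is that you reroute through part~(6) before applying part~(9) — one can apply (9) directly to $\newtau{n+1}{u}(R,\D,\fa_\bullet^{t_\bullet})$ with the same $\q'$ — and you verify the digit claim by an explicit base-$q$ computation, whereas the paper deduces nonvanishing from the constancy of $t_i^{(n)}$ for $n\ge 2$ together with Lemma~\ref{qadic}~(2).
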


\begin{proof}
Since $q(q-1) t_i \in \N$, the $n$-th digit $t_i^{(n)}$ of $t_i$ in base $q$ is constant for $n \ge 2$.
By Lemma \ref{qadic} (2), it is non-zero.
Therefore, the assertion follows from Proposition \ref{upper test basic} (2) and (9).
\end{proof}

\begin{defn}
Let $(X=\Spec R, \D, \fa^t)$ be a triple with $t>0$, let $I$ be an $\m$-primary ideal, $\fb \subseteq R$ be a proper ideal, and let $e$ be a positive integer such that $(p^e-1)(K_X+\D)$ is Cartier.
Then we define 
\[
\fjnn{n}{ u}(R, \D, \fa^t; \fb) : = \inf \{ s > 0 \mid \newtau{n}{u} (R, \D, \fa^t \fb^s) \subseteq I \} \in \R_{\ge 0}.
\]
\end{defn}

\begin{prop}\label{fpt basic}
With the above notation, the following hold.
\begin{enumerate}
\item $0 \le \fjnn{n}{u}(R, \D, \fa^t ; \fb) \le \lul + \mu_R(\fb)$.
\item $p^{en} \cdot \fjnn{n}{u}(R, \D, \fa^t ;\fb) \in \Z$.
\end{enumerate}
\end{prop}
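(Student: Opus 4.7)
The plan is to read off both statements directly from the explicit description
\[
\newtau{n}{u}(R,\D,\fa^t\fb^s) = \phi^{e(n+u)}_\D\Bigl(F^{e(n+u)}_*\bigl(\fa^{p^{eu}\age{tp^{en}-1}}\,\fb^{p^{eu}\age{sp^{en}-1}}\,\tau(R,\D)\bigr)\Bigr).
\]

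For part~(1), the lower bound is immediate, as $\fjnn{n}{u}(R,\D,\fa^t;\fb)$ is an infimum over $\R_{>0}$. For the upper bound I will set $s_0 := \lul + \mu_R(\fb)$ and show $\newtau{n}{u}(R,\D,\fa^t\fb^{s_0}) \subseteq I$; I may assume $\lul \ge 1$, since otherwise $I = R$ and there is nothing to prove. Because $s_0 \in \Z$, the exponent of $\fb$ simplifies to $s_0 p^{e(n+u)} - p^{eu}$, so the main technical claim is
\[
\fb^{s_0 p^{e(n+u)} - p^{eu}} \subseteq (\m^{\lul})^{[p^{e(n+u)}]}.
\]
Granted this, Frobenius linearity $\phi^{e(n+u)}_\D(F^{e(n+u)}_*(g^{p^{e(n+u)}}h)) = g\cdot\phi^{e(n+u)}_\D(F^{e(n+u)}_*h)$ yields $\newtau{n}{u}(R,\D,\fa^t\fb^{s_0}) \subseteq \m^{\lul}\subseteq I$, whence $\fjnn{n}{u}(R,\D,\fa^t;\fb)\le s_0$.

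The key inclusion follows from Lemma~\ref{Skoda}(1) applied to $\fb$, with a small case split. When $n \ge 1$, I apply Skoda with Frobenius power $q = p^{e(n+u)}$; the identity $\age{s_0 - p^{-en}} = s_0$ (valid because $s_0\in\Z$ and $0<p^{-en}<1$) yields the clean decomposition $\fb^{s_0 p^{e(n+u)} - p^{eu}} = (\fb^{\lul})^{[p^{e(n+u)}]} \cdot \fb^{p^{e(n+u)}\mu_R(\fb) - p^{eu}}$, which already lies in $(\m^{\lul})^{[p^{e(n+u)}]}$. When $n=0$, Skoda with $q = p^{eu}$ gives instead $\fb^{(s_0-1)p^{eu}} = (\fb^{\lul-1})^{[p^{eu}]} \cdot \fb^{\mu_R(\fb)\,p^{eu}}$, and a pigeonhole argument on monomials in a minimal generating set of $\fb$ produces $\fb^{\mu_R(\fb)\,p^{eu}} \subseteq \fb^{[p^{eu}]} \subseteq \m^{[p^{eu}]}$, so the product lands inside $(\m^{\lul-1})^{[p^{eu}]} \cdot \m^{[p^{eu}]} \subseteq (\m^{\lul})^{[p^{eu}]}$, as required.

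For part~(2), I observe that $s \mapsto \age{sp^{en}-1}$ is a left-continuous step function on $\R_{>0}$ taking the constant integer value $k-1$ on each interval $((k-1)/p^{en},\,k/p^{en}]$ for $k \ge 1$. Consequently $s \mapsto \newtau{n}{u}(R,\D,\fa^t\fb^s)$ is constant on each such interval, and the sequence $J_k := \newtau{n}{u}(R,\D,\fa^t\fb^{k/p^{en}})$ is a descending chain of ideals that, by part~(1), eventually lies in $I$. Letting $k_0$ be the smallest such index, the set $\{s>0 : \newtau{n}{u}(R,\D,\fa^t\fb^s) \subseteq I\}$ equals $((k_0-1)/p^{en},\infty)$, giving $\fjnn{n}{u}(R,\D,\fa^t;\fb) = (k_0-1)/p^{en} \in p^{-en}\Z_{\ge 0}$, which is (2). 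The only subtle point in the entire argument is the separate Skoda bookkeeping in the $n=0$ case, forced by the discontinuity of $\age{s_0 - p^{-en}}$ at $n=0$; this is a minor annoyance rather than a real obstacle.
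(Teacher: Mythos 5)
Your proof is correct and follows essentially the same strategy as the paper. For part (1), the paper cites Proposition~\ref{upper test basic}(5) to write $\newtau{n}{u}(R, \D, \fa^t \fb^{\lul+\mu_R(\fb)}) = \fb^{\lul} \cdot \newtau{n}{u} (R, \D, \fa^t \fb^{\mu_R(\fb)}) \subseteq \fb^{\lul} \subseteq I$, which is a packaged form of the Skoda computation that you carry out by hand directly from Lemma~\ref{Skoda}(1); for part (2), the paper invokes Proposition~\ref{upper test basic}(1), which is precisely the constancy-on-intervals observation you spell out. One small merit of your version: you explicitly handle the $n=0$ edge case, which is genuinely marginal. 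Iterating the ``Moreover'' clause of Proposition~\ref{upper test basic}(5) down from $t_1 = \lul+\mu_R(\fb)$ to $t_1 = \mu_R(\fb)$ requires $t_1 > \mu_R(\fb) + 1/q^n$ at the last step, i.e.\ $1 > 1/q^n$, which fails for $n=0$; your pigeonhole argument producing $\fb^{\mu_R(\fb)\,p^{eu}} \subseteq \fb^{[p^{eu}]}$ is the right patch (equivalently, one can iterate only $\lul-1$ times and then absorb a final factor of $\fb$ by the pigeonhole). This is the kind of bookkeeping the paper's one-line proof leaves implicit, so your more careful treatment is a welcome improvement rather than a deviation.
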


\begin{proof}
By Proposition \ref{upper test basic} (5), we have
\begin{eqnarray*}
\newtau{n}{u}(R, \D, \fa^t \fb^{\lul+\mu_R(\fb)}) &=& \fb^{\lul} \cdot \newtau{n}{u} (R, \D, \fa^t \fb^{\mu_R(\fb)})\\
& \subseteq & \fb^{\lul} \subseteq I,
\end{eqnarray*}
which proves the assertion in (1).

The assertion in (2) follows from Proposition \ref{upper test basic} (1). 
\end{proof}

\begin{prop}\label{ACC for bdd}
Let $(X=\Spec R,\D)$ be a pair such that $(p^e-1)(K_X+\D)$ is Cartier for some positive integer $e$, let $t >0$ be a rational number, and let $M, \mu>0$ and $u \ge 2$ be positive integers.
Assume that 
\begin{enumerate}
\item $q> \mu+ \emb(R)$, and
\item $q^m(q-1) t \in \N$ for some integer $m$.
\end{enumerate}
Then, there exists a positive integer $n_1$ such that for every ideal $\fb \subseteq R$, 
if $\fb  = \fa + \m^M$ for some ideal $\fa \subseteq R$ with $\mu_R(\fa) \le \mu$, then we have $\newtau{n}{u}(R,\D, \fb^t)= \newtau{n_1}{u}(R, \D , \fb^t)$ for every $n \ge n_1$.
\end{prop}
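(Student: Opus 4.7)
The plan is to reduce the stabilization of $\{\newtau{n}{u}(R,\D,\fb^t)\}_n$ for large $n$ to that of an auxiliary descending chain of ideals in $R$, and then use the containment $\fb \supseteq \m^M$ to control the chain uniformly in $\fb$.

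First, by condition (2) together with Lemma~\ref{qadic}~(2), the base-$q$ digits $t^{(n)}$ are eventually a fixed nonzero constant: there exist $n_0 \ge 0$ and $c \in \{1,\dots,q-1\}$ with $t^{(n)} = c$ for all $n > n_0$. For every $\fb = \fa + \m^M$ with $\mu_R(\fa) \le \mu$, set $l := \mu_R(\fa) + \emb(R) \le \mu + \emb(R)$, so that $q > l$ by condition (1). Applying Lemma~\ref{Skoda}~(2) to the decomposition $\fb = \fa + \m^M$, the same computation as in Proposition~\ref{upper test basic}~(9)---with Lemma~\ref{Skoda}~(2) replacing Lemma~\ref{Skoda}~(1), valid because $q^{u-1}c \ge q > l$---yields, for every $n > n_0$ and every ideal $\q \subseteq R$,
\[
\tau^{n,u}_{e,\q}(R,\D,\fb^t) \;=\; \tau^{n-1,u}_{e,\,T_\fb(\q)}(R,\D,\fb^t),
\]
where $T_\fb(\q) := \phi_\D^e(F^e_*(\fb^{q^u c}\,\q))$. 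Iterating, for every $j \ge 0$,
\[
\newtau{n_0+j}{u}(R,\D,\fb^t) \;=\; \tau^{n_0,u}_{e,\,T_\fb^j(\tau(R,\D))}(R,\D,\fb^t).
\]
Since $\tau(R,\D)$ is uniformly $(\D,F)$-compatible, the sequence $\{T_\fb^j(\tau(R,\D))\}_{j \ge 0}$ is a descending chain of subideals of $\tau(R,\D)$, so it suffices to find an index $j_1$, independent of $\fb$, at which this chain stabilizes.

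To obtain uniformity, introduce the $\fb$-independent operator $T_0(\q) := \phi_\D^e(F^e_*(\m^{M q^u c}\,\q))$. Since $\fb \supseteq \m^M$ gives $T_\fb(\q) \supseteq T_0(\q)$, induction yields $T_\fb^j(\tau(R,\D)) \supseteq T_0^j(\tau(R,\D))$ for every $j$. The $\fb$-independent descending chain $\{T_0^j(\tau(R,\D))\}_j$ stabilizes in the Noetherian ring $R$ at some index $j^*$; set $\q^* := T_0^{j^*}(\tau(R,\D))$.

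The crucial step is that $\tau(R,\D)/\q^*$ has finite length. Indeed, for any prime $\p \ne \m$ we have $\m^M \not\subseteq \p$, so $\m^M R_\p = R_\p$; localizing at $\p$, $T_0$ becomes the map $\q \mapsto \phi_{\D_\p}^e(F^e_*\,\q_\p)$, which fixes $\tau(R_\p,\D_\p) = \tau(R,\D)_\p$ by compatibility. Inductively $T_0^j(\tau(R,\D))_\p = \tau(R,\D)_\p$ for every $j$, hence $\q^*_\p = \tau(R,\D)_\p$. Therefore $\tau(R,\D)/\q^*$ is supported only at $\m$, giving $L := \len(\tau(R,\D)/\q^*) < \infty$. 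For every admissible $\fb$ and every $j \ge j^*$, the inclusions $\q^* \subseteq T_\fb^j(\tau(R,\D)) \subseteq \tau(R,\D)$ confine the chain $\{T_\fb^j(\tau(R,\D))\}_{j \ge j^*}$ to length at most $L$, so it stabilizes by $j = j^* + L$. Setting $n_1 := n_0 + j^* + L$ then gives the desired uniform stabilization.

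The main obstacle is the finite-length control of $\tau(R,\D)/\q^*$: after the $\fb$-independent chain $\{T_0^j(\tau(R,\D))\}$ stabilizes, one needs the remaining ``gap'' to have finite length, and this is secured precisely by localizing at primes $\p \ne \m$, where the condition $\fb \supseteq \m^M$ trivializes the $\fb$-dependent part. The adaptation of Proposition~\ref{upper test basic}~(9) using Lemma~\ref{Skoda}~(2) is routine once condition~(1) is invoked to control the Skoda threshold.
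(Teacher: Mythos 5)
Your proof is correct, and it takes a genuinely different route from the paper's. The paper first reduces (via Proposition~\ref{upper test basic}~(6)) to the case $t > \mu + \emb(R)$ with $(p^e-1)t \in \N$, then directly observes that the descending chain $\{\newtau{n}{u}(R,\D,\fb^t)\}_n$ is bounded below by the explicit $\fb$-independent ideal $\m^{M\age{t}}\cdot\tau(R,\D)$ (using $\fb \supseteq \m^M$ together with the monotonicity in $u$ from Proposition~\ref{upper test basic}~(7)), so two consecutive terms must agree by step $n_1 := \ell_R(\tau(R,\D)/\m^{M\age{t}}\tau(R,\D))$; it then propagates this single coincidence to all later indices via the Skoda-type relation $\newtau{m+1}{u}(R,\D,\fb^{t'+1}) = \fb\cdot\newtau{m}{u}(R,\D,\fb^{t'})$, in the style of Proposition~\ref{stab basic}~(3). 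Your argument instead factors the chain through iterates of the operator $T_\fb(\q) = \phi_\D^e(F^e_*(\fb^{q^u c}\q))$, bounds $T_\fb$ below by the $\fb$-independent operator $T_0$, and controls the gap $\tau(R,\D)/\q^*$ by localizing at primes $\p \ne \m$ to trivialize the $\m^{Mq^uc}$ factor. Both proofs ultimately exploit the same two facts (the Skoda equality from Lemma~\ref{Skoda}~(2) under hypothesis (1), and the inclusion $\fb \supseteq \m^M$), but your explicit iteration makes the stabilization-propagation step automatic rather than requiring a separate argument, at the cost of the somewhat heavier localization step to establish finite length; the paper's lower bound $\m^{M\age{t}}\tau(R,\D)$ gives finite length for free and avoids the preliminary reduction to eventually-constant digits that you use. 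One minor simplification you could make: since $T_\fb^j(\tau(R,\D)) \supseteq T_0^j(\tau(R,\D)) \supseteq \q^*$ holds for \emph{all} $j$, not just $j \ge j^*$, the whole chain already lies between $\q^*$ and $\tau(R,\D)$, so you may take $n_1 := n_0 + L$ directly without the offset by $j^*$.
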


\begin{proof}
By Proposition \ref{upper test basic} (6), it is enough to show the assertion in the case when $t > \mu+\emb(R)$ and $(p^e-1)t \in \N$.
Set $n_1 : = \len( \tau(R,\D) / (\m^{M \age{t}} \cdot \tau(R,\D)))$.
We will prove that the assertion holds for this constant $n_1$.

Let $\fa \subseteq R$ be an ideal such that $\mu_R(\fa) \le \mu$ and set $\fb : = \fa + \m^M$.
We consider the sequence of ideals $\{ \newtau{n}{u} (R, \D, \fb^t)\}_{n \ge 1}$.
As in the proof of Proposition \ref{upper test dc}, by using Lemma \ref{Skoda} (2) instead of Lemma \ref{Skoda} (1), the sequence $\{ \newtau{n}{u}(R, \D, \fb^t) \}_{n}$ is a descending chain.
Moreover, since $\fb \supseteq \m^M$, we have 
\begin{eqnarray*}
\newtau{n}{u}(R, \D, \fb^t) &\supseteq& \newtau{n}{u}(R, \D, (\m^{M})^{t})\\
&\supseteq & \newtau{n}{u}(R, \D, (\m^{M})^{t}) \\
& \supseteq & \newtau{n}{0}(R, \D, (\m^{M})^{ t}) \\
& \supseteq & \m^{M \age{t}} \cdot \tau(R, \D).
\end{eqnarray*}
Since we have
\[
\tau(R,\D) \supseteq \newtau{1}{u} (R, \D, \fb^t) \supseteq \newtau{2}{u} (R, \D, \fb^t) \supseteq \dots \supseteq \m^{M \age{t}} \cdot \tau(R, \D),
\]
there exists an integer $1 \le m \le n_1$ such that
\[
\newtau{m}{u}(R,\D, \fb^t)= \newtau{m+1}{u}(R, \D, \fb^t).
\]

On the other hand, as in the proof of Proposition \ref{upper test basic} (5), by using Lemma \ref{Skoda} (2) instead of Lemma \ref{Skoda} (1), we have
\[
\newtau{m+1}{u}(R, \D, \fb^{t'+1}) = \fb \cdot \newtau{m}{u}(R, \D, \fb^{t'})
\]
for any real number $t'>\mu+\emb(R)$.
Then, as in the proof of Proposition \ref{stab basic} (3), we have $\newtau{m+1}{u}(R, \D, \fa^t)= \newtau{m+2}{u}(R, \D, \fa^t)$, which completes the proof.
\end{proof}

\section{Rationality of the limit of $F$-pure thresholds}
In this section, we give uniform bounds for the denominators of $F$-jumping numbers (Proposition \ref{jump fin colen}) and for the stabilization exponents (Proposition \ref{stab fin colen}) of $\m$-primary ideals with fixed colength.
By using these bounds, we will prove Theorem \ref{intro BMS}.

\begin{prop}\label{jump fin colen}
Let $(X=\Spec R, \D)$ be a pair such that $(p^e-1)(K_X+\D)$ is Cartier for some integer $e>0$ and $M>0$ be an integer. 
Then there exists an integer $N>0$ such that for any ideal $\fa \subseteq R$, if $\fa \supseteq \m^M$, then any $F$-jumping number of $(R,\D; \fa)$ is contained in $(1/N)\cdot \Z$.
\end{prop}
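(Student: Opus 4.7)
The argument combines three ingredients: a uniform bound on $\mu_R(\fa)$ coming from $\fa\supseteq\m^M$, Skoda-type periodicity of test ideals, and the uniform stabilization of the variant test ideals $\newtau{n}{u}$ provided by Proposition~\ref{ACC for bdd}. Since $\fa/\m^M$ is a submodule of the Artinian module $R/\m^M$, we get $\mu_R(\fa)\le\mu:=\mu_R(\m^M)+\ell_R(R/\m^M)$, uniformly in $\fa\supseteq\m^M$. By the Skoda formula of Proposition~\ref{lower test basic}(2), $\tau(R,\D,\fa^s)=\fa\cdot\tau(R,\D,\fa^{s-1})$ whenever $s>\mu$, so the set of $F$-jumping numbers is invariant under the shift $s\mapsto s+1$ once $s>\mu$. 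It therefore suffices to bound uniformly the denominators of $F$-jumping numbers lying in the bounded interval $[0,\mu+1]$.

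Replacing $e$ by a large enough multiple of itself (which preserves the Cartier condition) we may assume $q:=p^e>\mu+\emb(R)$; choose also $u\ge 2$ with $q^{u-1}\ge\mu$. Any $F$-jumping number $t^*$ is rational (Proposition~\ref{disc rat}) with denominator of the form $q^m(q-1)$, so $q^m(q-1)t^*\in\N$ for some $m\ge 0$. Apply Proposition~\ref{ACC for bdd} to $\fb=\fa$ (valid since $\fa=\fa+\m^M$), obtaining an integer $n_1$, independent of $\fa$, such that the descending chain $\{\newtau{n}{u}(R,\D,\fa^{t^*})\}_n$ of Proposition~\ref{upper test dc} stabilizes from $n=n_1$ onwards. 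Inspecting the proof of Proposition~\ref{ACC for bdd} shows $n_1$ is controlled by $\ell_R(\tau(R,\D)/(\m^{M\lceil t^*\rceil}\tau(R,\D)))\le\ell_R(\tau(R,\D)/(\m^{M(\mu+1)}\tau(R,\D)))=:N_0$, uniform in $t^*\in[0,\mu+1]$; the translation formula $\newtau{n+1}{u}(R,\D,\fa^{t^*})=\phi^e_\D(F^e_*(\newtau{n}{u}(R,\D,\fa^{qt^*})))$ from Proposition~\ref{upper test basic}(6) then reduces the positive-depth case $m\ge 1$ to the depth-zero case without increasing $N_0$ beyond a uniform constant.

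The concluding step uses that $\newtau{n}{u}(R,\D,\fa^t)$ depends on $t$ only through the truncation $\qadic{t}{n}$ (Proposition~\ref{upper test basic}(1)), together with the limit identity $\lim_{u'\to\infty}\newtau{n}{u'}(R,\D,\fa^t)=\tau(R,\D,\fa^{\qadic{t}{n}})$ (Proposition~\ref{upper test basic}(7)). Combined with the uniform stabilization at $n=N_0$, this forces the left-limit $\tau(R,\D,\fa^{t^*-\epsilon})$ for small $\epsilon>0$ to depend on $t^*$ only through $\qadic{t^*}{N_0}\in(1/q^{N_0})\Z$. Hence any jump of $s\mapsto\tau(R,\D,\fa^s)$ must be located at $s\in(1/q^{N_0})\Z$, and $N:=q^{N_0}$ suffices.

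The main obstacle is extracting from Proposition~\ref{ACC for bdd} a bound on $n_1$ that is genuinely uniform over all jumping numbers in $[0,\mu+1]$: one must rule out that $n_1$ grows with the $q$-adic depth $m$ of $t^*$, which requires careful use of the translation formula together with finite-length information on the relevant quotients, and the cases $t^*\in\Z[1/p]$ and $t^*\notin\Z[1/p]$ may need to be handled separately since the $q$-adic expansion of $t^*$ behaves qualitatively differently in each.
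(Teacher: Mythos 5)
Your opening reduction is sound and matches the paper's first step: from $\fa\supseteq\m^M$ you get the uniform bound $\mu_R(\fa)\le l:=\mu_R(\m^M)+\len(R/\m^M)$, and the Skoda-type periodicity makes the set of $F$-jumping numbers stable under $b\mapsto b-1$ above $l$, so it suffices to control denominators on a bounded interval. But the rest of the argument contains a genuine error, and its conclusion is false. You end by asserting that every jumping number lies in $(1/q^{N_0})\cdot\Z$, so that $N=q^{N_0}$ is a power of $p$. This cannot be right: for $R=\F_p[[x,y]]$ with $p>3$ and $\fa=(x^2,y^3)\supseteq\m^4$, the $F$-pure threshold is $5/6$ (by the monomial-ideal formula of Hara--Yoshida), whose denominator is coprime to $p$; no $N$ of the form $p^{eN_0}$ contains $5/6\cdot N\in\Z$. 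Any correct $N$ must carry a factor coprime to $p$, such as the $q^{n!}-1$ in the paper's bound, and producing that factor is exactly the part of the proof your argument omits.

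The error enters in two places. First, the stabilization index of Proposition \ref{ACC for bdd} is obtained by reducing, via $\newtau{n+1}{u}(R,\D,\fb^{t})=\phi^e_\D(F^e_*(\newtau{n}{u}(R,\D,\fb^{qt})))$, from $q^m(q-1)t\in\N$ to the case $(q-1)t'\in\N$ with $t'=q^mt$; this costs $+m$, and the $q$-adic depth $m$ is unbounded as $t$ ranges over the rationals in a bounded interval. You flag this obstacle, but the assertion that the translation formula avoids the loss is precisely what fails: in the example above, the descending chain $\{\tau(R,\D,\fa^{\qadic{t}{n}})\}_n$ for $t$ slightly larger than $5/6$ can only stabilize after $\qadic{t}{n}$ has crossed $5/6$, which takes arbitrarily many steps as $t\to(5/6)^{+}$. (Also, Proposition \ref{disc rat} gives only rationality and discreteness; it does not say the denominator has the form $q^m(q-1)$.) Second, even granting uniform stabilization, knowing that the left limit at $t^*$ equals $\tau(R,\D,\fa^{\qadic{t^*}{N_0}})$ does not locate $t^*$ in $(1/q^{N_0})\cdot\Z$. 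The paper's proof instead uses that the set $B$ of jumping numbers is closed under $b\mapsto q b$ (\cite[Lemma 3.25]{BSTZ}) in addition to $b\mapsto b-1$ for $b>l$, together with the bound $|B\cap[0,l]|\le n:=\len(\tau(R,\D)/\tau(R,\D,\m^{Ml}))$; the elements $b_m=(q^mb-\lfloor q^mb\rfloor)+\min\{l-1,\lfloor q^mb\rfloor\}$ then all lie in $B\cap[0,l]$, and pigeonhole forces $b\in(1/N)\cdot\Z$ with $N=q^n(q^{n!}-1)$. You would need to bring in this multiplicative stability and the counting step to repair the proof.
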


\begin{proof}
Set $l : = \len(R/ \m^M) +\mu_R(\m^M)$ and $n : = \len( \tau(R, \D) / \tau(R, \D, \m^{M l}))$.
We note that the module $\tau(R, \D) / \tau(R, \D, \m^{M l})$ has finite length because the test ideals commute with localization (\cite[Proposition 3.1]{HT}).
Let $\fa \subseteq R$ be an ideal such that $\m^M \subseteq \fa$ and let $B \subseteq \R_{>0}$ be the set of all $F$-jumping numbers of $(R, \D ;\fa)$.

Since we have $\mu( \fa) \le l$, it follows from \cite[Corollary 3.27]{BSTZ} that for every element $b \in B \cap \R_{> l}$, we have $b-1 \in B$.
It also follows from \cite[Lemma 3.25]{BSTZ} that for every element $b \in B$, we have $p^e b \in B$.
Moreover, since $\tau(R, \D) \supseteq \tau(R, \D, \fa^t) \supseteq \tau(R, \D, \m^{M l})$ for every $t \le l$, the number of the set $B \cap [0, l]$ is at most $n$. 
Then the assertion follows from the lemma below.
\end{proof}

\begin{lem}
Let $l, n>0$ and $q \ge 2$ be integers.
Then there exists an integer $N>0$ with the following property:
if $B \subseteq \R_{\ge 0}$ is a subset such that
\begin{enumerate}
\item for every element $b \in B$, if $b >l$, then we have $b-1 \in B$,
\item if $b \in B$, then $q \cdot b \in B$, and
\item the number of the set $B \cap [0, l]$ is at most $n$,
\end{enumerate}
then we have $B \subseteq (1/N) \cdot \Z$.
\end{lem}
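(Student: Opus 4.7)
The plan is to reduce everything to an analysis of the fractional parts of elements of $B$, using condition (1) to bound the number of such fractional parts and condition (2) to constrain them arithmetically.

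First, I would define $F := \{\{b\} : b \in B\} \subseteq [0,1)$, where $\{x\}$ denotes the fractional part of $x$. The key observation is that for any $b \in B$, repeatedly applying condition (1) yields a descending chain $b, b-1, b-2, \ldots$ inside $B$, which terminates at some element $b' \in B \cap [0,l]$ with $b - b' \in \Z_{\ge 0}$, hence $\{b\} = \{b'\}$. Therefore $F$ coincides with the set of fractional parts of $B \cap [0,l]$, and by condition (3) we obtain the cardinality bound $|F| \le n$.

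Next, I would observe that condition (2) implies $F$ is stable under the map $\alpha \mapsto \{q\alpha\}$: if $b \in B$, then $qb \in B$, so $\{qb\} = \{q\{b\}\} \in F$. Now fix any $\alpha \in F$. Since the orbit $\{\alpha\}, \{q\alpha\}, \{q^2\alpha\}, \ldots, \{q^n\alpha\}$ consists of $n+1$ elements of the set $F$ of size at most $n$, the pigeonhole principle gives integers $0 \le i < j \le n$ with $\{q^i\alpha\} = \{q^j\alpha\}$. Equivalently, $(q^j - q^i)\alpha = q^i(q^{j-i}-1)\alpha \in \Z$.

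Finally, I would set
\[
N := q^n \cdot \mathrm{lcm}(q-1, q^2-1, \ldots, q^n-1),
\]
which is a common multiple of every $q^i(q^{j-i}-1)$ with $0 \le i < j \le n$. Then $N\alpha \in \Z$ for every $\alpha \in F$, so for any $b \in B$ we get $Nb = N\lfloor b \rfloor + N\{b\} \in \Z$, i.e., $B \subseteq (1/N) \cdot \Z$. There is no serious obstacle here; the only subtlety is verifying that the descent via condition (1) indeed lands in $[0,l]$ (not below $0$), which holds because subtracting $1$ is applied only when $b > l \ge 1$, so the chain is never driven negative. Once the fractional-part reduction is in place, the conclusion follows by a clean pigeonhole argument.
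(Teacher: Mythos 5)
Your proof is correct and takes essentially the same route as the paper's: both rest on the pigeonhole principle applied to the orbit of the fractional part of $b$ under multiplication by $q$ inside a set of size at most $n$, with $N$ chosen as a common multiple of all $q^i(q^{j-i}-1)$ for $0 \le i < j \le n$. Your fractional-part formulation is a cleaner packaging of the paper's explicit representatives $b_m := (q^m b - \lfloor q^m b \rfloor) + \min\{l-1, \lfloor q^m b\rfloor\} \in B \cap [0,l]$, and your choice $N = q^n \cdot \mathrm{lcm}(q-1,\dots,q^n-1)$ serves the same role as the paper's $N = q^n(q^{n!}-1)$.
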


\begin{proof}
The proof is essentially the same as that of \cite[Proposition 3.8]{BMS1}.
Set $N : = q^n (q^{n !}-1)$, where $n!$ is the factorial of $n$.

For every element $b \in B$ and every integer $m \ge 0$, we define $b_m \in B \cap [0, l]$ by 
\[
b_m := (q^m b - \lfloor q^m b \rfloor)+ \min \{ l-1, \lfloor q^m b \rfloor \}.
\]

If $b \not\in (1/N) \cdot \Z$, then $b_0, b_1, \dots, b_n$ are all distinct and hence contradiction.
\end{proof}

\begin{prop}\label{stab fin colen}
Let $(X=\Spec R, \D)$ be a pair such that $(p^e-1)(K_X+\D)$ is Cartier for some integer $e>0$ and $M>0$ be an integer.
Then there exists $u_0>0$ such that for every ideals $\fa \supseteq \m^M$, we have
\[
\ustab( R, \D, \fa ;e) \le u_0.
\]
\end{prop}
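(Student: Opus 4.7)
The plan is to bound $\ustab(R,\D,\fa;e)$ uniformly over $\fa \supseteq \m^M$ by reducing, via the basic properties of $\stab$ collected in Proposition \ref{stab basic}, to an exponent $t$ for which the ascending chain $\{\tau^{en}_{+}(R,\D,\fa^t)\}_{n}$ lies in a module of uniformly bounded length. Set $l := \len(R/\m^M) + \mu_R(\m^M)$, so that every $\fa \supseteq \m^M$ satisfies $\mu_R(\fa) \le l$. By the proof of Proposition \ref{uniform stab exp}, it suffices to bound $\stab(R,\D,\fa^t;e)$ uniformly in $\fa$ for rational $t \in (0, l]$ with $(p^e-1) t \in \N$.

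I will first apply Proposition \ref{stab basic} (2) iteratively to raise the exponent: for any integer $K \ge 0$ one has $\stab(R,\D,\fa^t;e) \le \stab(R,\D,\fa^{p^{eK} t};e) + K$. Choose $K_0$ to be the smallest positive integer with $p^{eK_0}/(p^e-1) > l$; this $K_0$ depends only on $l$, $e$, and $p$, and ensures that $t' := p^{eK_0} t > l \ge \mu_R(\fa)$ for every admissible $t \ge 1/(p^e-1)$. Next, I iterate Proposition \ref{stab basic} (1) to bring $t'$ back down to $t'' := t' - k \in (\mu_R(\fa), \mu_R(\fa)+1]$ for the appropriate non-negative integer $k$; each intermediate step has exponent strictly above $\mu_R(\fa)$, so (1) applies at every stage and yields $\stab(R,\D,\fa^{t'};e) \le \stab(R,\D,\fa^{t''};e)$.

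Since $t'' > \mu_R(\fa)$ and $(p^e-1) t'' \in \N$, Proposition \ref{stab basic} (3) identifies $\stab(R,\D,\fa^{t''};e)$ with the number of strict inclusions in the ascending chain $\{\tau^{en}_{+}(R,\D,\fa^{t''})\}_{n}$. This chain is contained in the interval $[\fa^{\lceil t''\rceil} \tau(R,\D),\ \tau(R,\D)]$, and since $\lceil t''\rceil \le l+1$ we have $\fa^{\lceil t''\rceil}\tau(R,\D) \supseteq \m^{M(l+1)}\tau(R,\D)$. Hence the number of strict inclusions is at most $L^* := \len(\tau(R,\D)/\m^{M(l+1)}\tau(R,\D))$, which is finite (the quotient has $\m$-primary support and is finitely generated as an $R$-module) and depends only on $R$, $\D$, and $M$. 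Setting $u_0 := L^* + K_0$ yields the uniform bound $\ustab(R,\D,\fa;e) \le u_0$. The main subtlety is to simultaneously land in the regime where Proposition \ref{stab basic} (3) applies (requiring the exponent to lie strictly above $\mu_R(\fa)$) while keeping the lower end $\fa^{\lceil t\rceil}\tau(R,\D)$ of the ascending chain uniformly bounded below in $\fa$; the combined use of (2) to escape and (1) to return down is tailored precisely for this.
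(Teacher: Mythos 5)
Your proof is correct and follows essentially the same strategy as the paper's: set $l = \len(R/\m^M) + \mu_R(\m^M)$, use Proposition \ref{stab basic}~(2) to raise the exponent by a fixed amount $K_0$ independent of $\fa$ so that it lands above $\mu_R(\fa)$, then invoke Proposition \ref{stab basic}~(3) to convert the stabilization exponent into the stabilization index of the ascending chain $\{\tau^{en}_+\}_n$, which is trapped in a module of length bounded uniformly in $\fa$. The one variation is your extra use of Proposition \ref{stab basic}~(1) to lower the raised exponent $t'$ back down into $(\mu_R(\fa), \mu_R(\fa)+1]$ before bounding the chain; the paper instead bounds the chain directly at the raised exponent $t p^{en_0} \le l p^{en_0}$, obtaining a looser but equally uniform length $\len(\tau(R,\D)/\m^{Mlp^{en_0}}\tau(R,\D))$. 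Your refinement tightens the numerical bound but does not change the underlying argument.
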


\begin{proof}
Set $l := \len(R/ \m^M) + \mu_R (\m^M)$ and take an integer $n_0 > 0$ such that $p^{e (n_0 -1)}>l$.
Let $\fa \subseteq R$ be an ideal such that $\fa \supseteq \m^M$ and $t>0$ be a rational number such that $(p^e-1) t \in \N$.

We first consider the case when $l <t \le l p^{e n_0} $.
In this case, by Proposition \ref{lower test basic} (1), the sequence $\{ \tau^{en}_{+}(R, \D, \fa^t) \}_{n \ge 0}$ is an ascending chain such that  
\[
\tau(R, \D) \supseteq \tau^{en}_{+}(R, \D, \fa^t) \supseteq \tau^{0}_{+}(R, \D, \fa^t) = \fa^{\age{t}} \cdot \tau(R, \D) \supseteq \m^{l M p^{e n_0} } \cdot \tau(R, \D)
\]
for every $n$.
Therefore, there exists an integer $0 \le n < \len(\tau(R, \D)/ (\m^{l M p^{e n_0} } \cdot \tau(R, \D)))$ such that 
\[
\tau^{en}_{+}(R, \D, \fa^t)=\tau^{e(n+1)}_{+}(R, \D, \fa^t).
\]
By Proposition \ref{stab basic} (3), we have 
\[
\stab(R, \D, \fa^t ;e) \le n \le \len(\tau(R, \D)/ (\m^{l M p^{e n_0} } \cdot \tau(R, \D))).
\]

We next consider the case when $t \le l$.
Since $l <t p^{e n_0}  \le l p^{e n_0}  $, it follows from Proposition \ref{stab basic} (2) that
\begin{eqnarray*}
\stab(R, \D, \fa^t ; e) &\le& \stab(R, \D, \fa^{t p^{e n_0}} ; e) +n_0\\
 &\le& \len(\tau(R, \D)/ (\m^{l M p^{e n_0} } \cdot \tau(R, \D))) +n_0.
\end{eqnarray*}
Therefore, $u_0 : =\len(\tau(R, \D)/ (\m^{l M p^{e n_0}} \cdot \tau(R, \D))) +n_0$ satisfies the property.
\end{proof}

\begin{prop}\label{cata tau}
Let $(X=\Spec R, \D)$ be a pair such that $(p^e-1)(K_X+\D)$ is Cartier for some integer $e>0$, $\{ \fa_m \}_{m \in \N}$ be a family of ideals of $R$ and $t>0$ be a real number.
Fix a non-principal ultrafilter $\U$.
Let $(\cata{R}, \cata{\m})$ be the catapower of the local ring $(R, \m)$, $\cata{\D}$ be the flat pullback of $\D$ to $\Spec \cata{R}$ and $\fa_\infty := \catae{\fa_m} \subseteq \cata{R}$.
If there exists a positive integer $M$ such that $\fa_m \supseteq \m^M$ for every $m$, then we have
\[
\tau (\cata{R}, \cata{\D}, \fa_\infty^t) = \catae{ \tau(R, \D, \fa_m^t)} \subseteq \cata{R}.
\]
\end{prop}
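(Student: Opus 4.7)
Our strategy is to identify both sides with the ideal $\tau^{en}_{+}$ for some uniformly large $n$, and to observe that $\tau^{en}_{+}$ commutes with the catapower construction. The commutation is essentially formal once we verify that the flat local homomorphism $R \to \cata{R}$ satisfies the hypotheses of Proposition \ref{test base change}: the residue field extension $k \subseteq \ultra{k}$ has invertible relative Frobenius by Proposition \ref{ultra field ext}(2), and $\cata{\m} = \m \cdot \cata{R}$. Thus $\tau(\cata{R}, \cata{\D}) = \tau(R, \D) \cdot \cata{R} = \catae{\tau(R, \D)}$, and $\phi^{en}_{\cata{\D}}$ is identified with $\phi^{en}_\D \otimes_R \cata{R}$. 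Moreover, $\fa_\infty \supseteq \cata{\m}^M$, and the uniform bound $\mu_R(\fa_m) \le \len(R/\m^{M+1})$ together with Lemma \ref{ulim prod} give $\catae{\fa_m^k} = \fa_\infty^k$ for every $k \in \N$.

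The main obstacle is arranging that $t$ has a $p$-adic denominator, since Proposition \ref{uniform stab exp} requires this in order to produce a uniform bound on $\stab(R, \D, \fa_m^t; e)$. For this we apply Proposition \ref{jump fin colen} to both $(R, \D)$ and $(\cata{R}, \cata{\D})$ with $M$, obtaining a single integer $N > 0$ such that every $F$-jumping number of every $(R, \D; \fa_m)$ and of $(\cata{R}, \cata{\D}; \fa_\infty)$ lies in $(1/N) \cdot \Z$. Any open interval of length $1/N$ contains at most one element of $(1/N) \cdot \Z$, while the $p$-adic rationals are dense in $\R$, so we can select a rational $t^* \in [t, t + 1/N)$ of the form $t^* = a/p^{e n_0}$ with $t^* \notin (1/N) \cdot \Z$. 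No $F$-jumping number of any relevant triple lies in $(t, t^*]$, so Lemma \ref{test inc} yields $\tau(R, \D, \fa_m^{t^*}) = \tau(R, \D, \fa_m^t)$ for every $m$ and $\tau(\cata{R}, \cata{\D}, \fa_\infty^{t^*}) = \tau(\cata{R}, \cata{\D}, \fa_\infty^t)$, and we replace $t$ by $t^*$ from here on.

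Now Proposition \ref{stab fin colen} applied to $(R, \D)$ and to $(\cata{R}, \cata{\D})$ yields a uniform $u_0$ with $\ustab(R, \D, \fa_m; e), \ustab(\cata{R}, \cata{\D}, \fa_\infty; e) \le u_0$, and since $p^{e n_0}(p^e - 1) t^* = (p^e - 1) a \in \N$, Proposition \ref{uniform stab exp} bounds the relevant stabilization exponents at $t^*$ by $u_0 + n_0$. Fix any $n \ge u_0 + n_0$; then $\tau^{en}_{+}(R, \D, \fa_m^{t^*}) = \tau(R, \D, \fa_m^{t^*})$ for every $m$ and $\tau^{en}_{+}(\cata{R}, \cata{\D}, \fa_\infty^{t^*}) = \tau(\cata{R}, \cata{\D}, \fa_\infty^{t^*})$. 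It remains to verify that $\catae{\tau^{en}_{+}(R, \D, \fa_m^{t^*})} = \tau^{en}_{+}(\cata{R}, \cata{\D}, \fa_\infty^{t^*})$. Unfolding the definition $\tau^{en}_{+}(R, \D, \fa^{t^*}) = \phi^{en}_\D(F^{en}_*(\fa^{\age{t^* p^{en}}} \cdot \tau(R, \D)))$, this follows because forming $\fa_m^{\age{t^* p^{en}}}\tau(R, \D)$ commutes with the catapower by Lemma \ref{ulim prod}, because the Frobenius pushforward $F^{en}_*$ commutes with catapower by Proposition \ref{test base change}(2), and because taking image under $\phi^{en}_\D$ is computed componentwise on $\ulim_m$ and descends through the surjection $\ultra{R} \twoheadrightarrow \cata{R}$, matching $\phi^{en}_{\cata{\D}}$ by Proposition \ref{test base change}(5). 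Stringing the identifications together gives the claimed equality.
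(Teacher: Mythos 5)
Your proof is correct in its essentials but takes a genuinely different route from the paper. The paper splits into two cases: for rational $t$ it enlarges $e$ so that $p^{en}(p^e-1)t \in \Z$, then uses Propositions \ref{stab fin colen} and \ref{uniform stab exp} to get a uniform stabilization exponent; for irrational $t$ it proves one inclusion via the ascending chain $\tau^{en}_+$ and the other by dropping to a rational $t' < t$ found via Proposition \ref{disc rat}. You instead unify the two cases by perturbing $t$ to a nearby $p$-adic rational $t^*$, using Proposition \ref{jump fin colen} (which the paper's proof of this statement does not invoke) to control the uniform denominator of all the jumping numbers in play, so that the replacement $t \mapsto t^*$ preserves every relevant test ideal simultaneously. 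This buys you a single argument in place of two, at the small cost of the extra appeal to \ref{jump fin colen} applied also to $(\cata{R}, \cata{\D})$.

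One step is stated imprecisely. You pick $t^* \in [t, t+1/N)$ with $t^* \notin (1/N)\Z$, and assert that no $F$-jumping number lies in $(t, t^*]$. These two constraints alone do not force this. If $t \notin (1/N)\Z$, the open interval $(t, t+1/N)$ contains exactly one element $s$ of $(1/N)\Z$, and nothing prevents $t^* > s$, in which case $(t, t^*]$ would contain $s$, possibly a jumping number. The fix is easy: take $s$ to be the smallest element of $(1/N)\Z$ strictly greater than $t$ (so $s = t + 1/N$ if $t \in (1/N)\Z$, and $s \in (t, t+1/N)$ otherwise), and choose $t^*$ a $p$-power-denominator rational in $(t, s)$; density of such rationals makes this possible, and then $(t, t^*] \cap (1/N)\Z = \emptyset$. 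With that repair, your use of Lemma \ref{test inc}(2) to conclude $\tau(R,\D,\fa_m^{t^*}) = \tau(R,\D,\fa_m^t)$ and likewise on $\cata{R}$ is sound, and the remainder of the argument (uniform stabilization via Propositions \ref{stab fin colen} and \ref{uniform stab exp}, commutation of $\tau^{en}_+$ with the catapower via Propositions \ref{test base change}, \ref{ultra field ext} and Lemma \ref{ulim prod}) matches what the paper does in its rational case.
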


\begin{proof}
We first consider the case when $t$ is a rational number.
By enlarging $e$, we may assume that $p^{en}(p^e-1)t \in \Z$ for some integer $n \ge 0$. 
Take a positive integer $u$ as in Proposition \ref{stab fin colen}.
Then we have
\[
\tau(R,\D, \fa_m^t) = \tau^{e(n+u)}_{+}(R, \D, \fa^t_m),
\]
for every $m$.
By enlarging $u$, we may assume that
\[
\tau(\cata{R}, \cata{\D}, \fa_\infty^t) = \tau^{e(n+u)}_{+}(\cata{R}, \cata{\D}, \fa_\infty^t).
\]
Since $\mu_R(\fa_m) \le \len(R/\m^M) + \mu_R(\m^M)$ for every $m$, it follows from Lemma \ref{ulim prod} that 
\[
(\fa_\infty)^s=\catae{(\fa_m)^s}
\]
for every integer $s>0$.
Combining with Proposition \ref{test base change} and \ref{ultra field ext}, we have 
\begin{eqnarray*}
\tau^{e l}_{+} (\cata{R}, \cata{\D}, \fa_\infty^t) &=&\phi^{e l}_{\cata{\D}}(F^{e l}_*(\fa^{\age{t p^{e l}}}_{\infty} \cdot \tau(\cata{R}, \cata{\D})))\\
&=& \phi^{e l}_{\cata{\D}} (F^{e l}_* \catae{\fa^{\age{t p^{e l }}}_{m} \cdot \tau(R,\D)})\\
&=& \catae{\phi^{e l}_{\D}(F^{e l}_*(\fa^{\age{t p^{e l}}}_{m} \cdot \tau(R,\D)))}\\
&=& \catae{ \tau^{e l}_{+}(R, \D, \fa_m^t)} \subseteq \cata{R}
\end{eqnarray*}
for every integer $l$.
Therefore, we have 
\[
\tau (\cata{R}, \cata{\D}, \fa_\infty^t) = \catae{ \tau(R, \D, \fa_m^t)} \subseteq \cata{R}.
\]

We next consider the case when $t$ is not a rational number.
For sufficiently large integer $n$, we have
\begin{eqnarray*}
\tau (\cata{R}, \cata{\D}, \fa_\infty^t) &=& \tau^{en}_{+}(\cata{R},\cata{\D}, \fa_\infty^t)\\
& =& \catae{\tau^{en}_{+}(R, \D, \fa_m^t)} \\
&\subseteq& \catae{ \tau(R, \D, \fa_m^t)} \subseteq \cata{R}.
\end{eqnarray*}
For the converse inclusion, by Proposition \ref{disc rat}, we can take a rational number $t'$ such that $t' < t$ and $\tau(\cata{R}, \cata{\D}, \fa_\infty^{t})=\tau(\cata{R}, \cata{\D}, \fa_\infty^{t'})$.
Then, we have
\begin{eqnarray*}
\tau(\cata{R}, \cata{\D}, \fa_\infty^{t})&=&\tau(\cata{R}, \cata{\D}, \fa_\infty^{t'})\\
&=& \catae{\tau(R, \D, \fa_m^{t'})} \\
& \supseteq & \catae{\tau(R, \D, \fa_m^{t})},
\end{eqnarray*}
which completes the proof.
\end{proof}

\begin{prop}\label{fpt sh}
With the notation above, let $I \subseteq R$ be an $\m$-primary ideal.
Assume that $\m^M \subseteq \fa_m \subseteq \m$ for every $m$.
Then there exists $T \in \U$ such that for all $m \in T$, we have
\[
\fjn^I (R, \D; \fa_m)= \fjn^{I \cdot R_{\#}} ( \cata{R}, \cata{\D}, \fa_\infty).
\]
\end{prop}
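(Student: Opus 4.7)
The plan is to transfer non-strict and strict containments of the test ideal in $I$ from the catapower $\cata R$ down to $R$ for most $m$ in the ultrafilter, using Proposition \ref{cata tau} (test ideals commute with $\catae{\cdot}$), Lemma \ref{ultra incl} (inclusions in $\cata R$ descend modulo $\m$-primary ideals), and Proposition \ref{jump fin colen} (uniform discreteness of $F$-jumping numbers).

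First I would set $t_\infty := \fjn^{I \cdot \cata R}(\cata R, \cata \D, \fa_\infty)$ and extract a common denominator $N$. Applying Proposition \ref{jump fin colen} both to $(R, \D)$ with the bound $M$ and to $(\cata R, \cata \D)$ with the same bound $M$ (this is valid since $\cata{\m}^M = \m^M \cdot \cata R \subseteq \fa_\infty$ by Lemma \ref{ulim prod}, taking a common multiple of the two resulting denominators), I obtain a positive integer $N$ such that every $F$-jumping number of $(R, \D; \fa_m)$ for every $m$ and every $F$-jumping number of $(\cata R, \cata \D; \fa_\infty)$ lies in $(1/N) \cdot \Z$. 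A short argument using the right-continuity in Lemma \ref{test inc} (2) then shows $\fjn^I(R, \D; \fa_m)$ and $t_\infty$ themselves are either zero or genuine $F$-jumping numbers, hence lie in $(1/N) \cdot \Z$.

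Next, I would pick rationals $t_+ \in (t_\infty, t_\infty + 1/N)$ with $\tau(\cata R, \cata \D, \fa_\infty^{t_+}) \subseteq I \cdot \cata R$ (possible by Lemma \ref{test inc} (2) and the definition of $t_\infty$) and, in case $t_\infty > 0$, $t_- \in (t_\infty - 1/N, t_\infty)$ with $\tau(\cata R, \cata \D, \fa_\infty^{t_-}) \not\subseteq I \cdot \cata R$. By Proposition \ref{cata tau} each side equals the catapower ideal $\catae{\tau(R, \D, \fa_m^{t_\pm})}$. Applying Lemma \ref{ultra incl} with the $\m$-primary ideal $\q := I$ to the inclusion at $t_+$ yields $T_1 \in \U$ on which $\tau(R, \D, \fa_m^{t_+}) \subseteq I$. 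For the reverse direction, the non-inclusion at $t_-$, together with the identification $(\ultra{\m})^{n} = \ultra{(\m^{n})}$ from Lemma \ref{ulim prod}, produces an integer $n_0$ with $T_2 := \{m \mid \tau(R, \D, \fa_m^{t_-}) \not\subseteq I + \m^{n_0}\} \in \U$, so a fortiori $\tau(R, \D, \fa_m^{t_-}) \not\subseteq I$ on $T_2$.

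Finally, on $T := T_1 \cap T_2$ (or $T := T_1$ when $t_\infty = 0$) we have $t_- < \fjn^I(R, \D; \fa_m) \le t_+$ (only the upper bound when $t_\infty = 0$). Since $\fjn^I(R, \D; \fa_m)$ lies in $(1/N) \cdot \Z$ and $t_\infty$ is the unique element of $(1/N) \cdot \Z$ in the interval $(t_-, t_+]$ of length strictly less than $2/N$, we conclude $\fjn^I(R, \D; \fa_m) = t_\infty$ on $T$. The main obstacle is securing a \emph{single} denominator $N$ that controls both sides uniformly, so that the approximate descent supplied by Lemma \ref{ultra incl} (which only gives containments modulo an $\m$-primary ideal) can be sharpened to an exact equality of $F$-jumping numbers; Proposition \ref{jump fin colen} is precisely the input that makes this possible.
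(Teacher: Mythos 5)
Your proposal is correct and follows essentially the same route as the paper's own proof: both pass through Proposition \ref{cata tau} to identify $\tau(\cata{R},\cata{\D},\fa_\infty^s)$ with $\catae{\tau(R,\D,\fa_m^s)}$, descend the resulting containment (resp.\ non-containment) via Lemma \ref{ultra incl}, and then invoke Proposition \ref{jump fin colen} to rule out $F$-jumping numbers of any $\fa_m$ strictly between a threshold $t'$ and $t_\infty$. The only cosmetic difference is that you sandwich $t_\infty$ between two rationals $t_\pm$ and apply Proposition \ref{jump fin colen} to both $(R,\D)$ and $(\cata R,\cata\D)$ to secure a common denominator, whereas the paper evaluates directly at $t_\infty$ on the one side and uses a single application of Proposition \ref{jump fin colen} to $(R,\D)$ to pick a lower buffer $t'<t_\infty$; both yield the same squeeze.
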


\begin{proof}
Set $t := \fjn^{I \cdot \cata{R}}(\cata{R}, \cata{\D} ; \fa_\infty) \in \R_{\ge 0}$.
If $\tau(R, \D) \subseteq I$, then we have $\fjn^I (R, \D ; \fa_m)=0$ for every $m \in \N$ and $\fjn^{I \cdot \cata{R}} (\cata{R}, \cata{\D}, \fa_\infty ) =0$.
Therefore, we may assume that $\tau(R, \D) \not\subseteq I$.
Since $\fa_\infty \neq (0)$, it follows from Lemma \ref{test inc} (2) that $t >0$.

It follows from Proposition \ref{cata tau} that we have 
\[
\catae{ \tau( R, \D, \fa_m^t)} = \tau( \cata{R}, \cata{\D}, \fa_\infty^t) \subseteq I \cdot \cata{R}.
\]
Since $I$ is $\m$-primary, it follows from Lemma \ref{ultra incl} that there exists $S_1 \in \U$ such that $\tau(R, \D, \fa_m^t) \subseteq I$ for every $m \in S_1$.
Therefore $\fjn^I (R, \D; \fa_m) \le \fjn^{I \cdot \cata{R}} (\cata{R}, \cata{\D}, \fa_\infty)$ for every $m \in S_1$.

On the other hand, by Proposition \ref{jump fin colen}, there exists $0 < t' < t$ such that for every ideal $\fb \supseteq \m^M$, if $ t' < \fjn^I (R, \D; \fb)$, then $t \le \fjn^I (R, \D; \fb)$.
Since $t'<t$, we have
\[
\catae{ \tau( R, \D, \fa_m^{t'})} = \tau( \cata{R}, \cata{\D}, \fa_\infty^{t'}) \not\subseteq I \cdot \cata{R}.
\]
Hence, we have
\[
\ulim_{m} \tau(R, \D, \fa_m^{t'}) \not\subseteq \ultra{I}.
\]
Therefore, there exists $S_2 \in \U$ such that $\tau(R, \D, \fa_m^{t'}) \not\subseteq I$ for every $m \in S_2$.
Then $T := S_1 \cap S_2$ satisfies the assertion.
\end{proof}

\begin{lem}[\textup{\cite[Lemma 3.3]{BMS2}}]\label{subadd}
Let $(X=\Spec R, \D)$ be a pair such that $K_X+\D$ is $\Q$-Carter, $I$ be an $\m$-primary ideal, $\fa, \fb \subseteq R$ be proper ideals.
Then we have 
\[
\fjn^I(R, \D ;\fa+\fb) \le \fjn^I(R, \D; \fa)+ \fjn^I (R, \D; \fb).
\]
\end{lem}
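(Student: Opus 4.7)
Write $s := \fjn^I(R,\D;\fa)$ and $t := \fjn^I(R,\D;\fb)$; we may assume both are finite. By the right-continuity of test ideals in the exponent (Lemma~\ref{test inc}(2)) together with the definition of $\fjn^I$ as an infimum, one has $\tau(R,\D,\fa^s) \subseteq I$ and $\tau(R,\D,\fb^t) \subseteq I$. It therefore suffices to prove $\tau(R,\D,(\fa+\fb)^{s+t+\delta}) \subseteq I$ for every $\delta > 0$, since then $\fjn^I(R,\D;\fa+\fb) \le s+t+\delta$, and letting $\delta \to 0^+$ gives the desired bound.

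The key ingredient is the elementary inclusion
\[
(\fa+\fb)^N \;\subseteq\; \fa^A + \fb^B \qquad \text{whenever } N, A, B \in \N \text{ satisfy } N \ge A+B,
\]
which holds because any product of $N$ factors drawn from $\fa \cup \fb$ contains either at least $A$ factors from $\fa$ or at least $B$ factors from $\fb$. The plan is to combine this with the ascending-chain approximation $\tau(R,\D,\fc^u) = \bigcup_n \tau^{en}_{+}(R,\D,\fc^u)$ of Proposition~\ref{lower test basic}(1), where $e \ge 1$ is fixed so that $(p^e-1)(K_X+\D)$ is Cartier (which holds under the running hypotheses of the applications of this lemma).

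For any integer $n$ with $\delta p^{en} \ge 2$, one has
\[
\age{(s+t+\delta)p^{en}} \;\ge\; (s+t)p^{en} + 2 \;\ge\; \age{sp^{en}} + \age{tp^{en}},
\]
so the combinatorial inclusion gives $(\fa+\fb)^{\age{(s+t+\delta)p^{en}}} \subseteq \fa^{\age{sp^{en}}} + \fb^{\age{tp^{en}}}$. Applying the $R$-linear map $\phi^{en}_{\D}\bigl(F^{en}_{*}(-\cdot\tau(R,\D))\bigr)$ and distributing over the sum then yields
\[
\tau^{en}_{+}(R,\D,(\fa+\fb)^{s+t+\delta}) \;\subseteq\; \tau^{en}_{+}(R,\D,\fa^s) + \tau^{en}_{+}(R,\D,\fb^t) \;\subseteq\; \tau(R,\D,\fa^s) + \tau(R,\D,\fb^t) \;\subseteq\; I.
\]
Taking the union over all large $n$ gives $\tau(R,\D,(\fa+\fb)^{s+t+\delta}) \subseteq I$ as required.

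The one subtle point, which is handled entirely by the slack $\delta > 0$, is that the ceiling of a sum can be strictly smaller than the sum of ceilings (by up to $2$ in this situation). Without the perturbation, the inequality $\age{up^{en}} \ge \age{sp^{en}}+\age{tp^{en}}$ fails in general for $u = s+t$, and one cannot invoke the combinatorial inclusion at the level of ceiling exponents. This is precisely why the argument passes through the auxiliary bound $s+t+\delta$ and only afterwards takes $\delta \to 0^{+}$.
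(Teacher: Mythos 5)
Your proof is correct, but it follows a genuinely different route from the paper's. The paper invokes the summation formula
\[
\tau(R,\D,(\fa+\fb)^c)=\sum_{\substack{u,v\ge 0\\ u+v=c}}\tau(R,\D,\fa^u\fb^v),
\]
attributed to the method of \cite[Theorem 3.1]{Tak04}, and then observes that each term with $u+v=\fjn^I(R,\D;\fa)+\fjn^I(R,\D;\fb)$ has $u\ge\fjn^I(R,\D;\fa)$ or $v\ge\fjn^I(R,\D;\fb)$, forcing the sum into $I$; no $\delta$-perturbation is needed and the $\tau^{en}_{+}$ approximation never appears. You instead replace the summation formula by the elementary binomial inclusion $(\fa+\fb)^N\subseteq\fa^A+\fb^B$ at the level of ideal powers, push it through $\phi^{en}_\D(F^{en}_*(-\cdot\tau(R,\D)))$, and handle the mismatch between $\age{(s+t)p^{en}}$ and $\age{sp^{en}}+\age{tp^{en}}$ via the slack $\delta$. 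This buys you a more self-contained argument that avoids citing the summation theorem, but at a cost: your use of $\tau^{en}_{+}$ (Definition~\ref{variants1} and Proposition~\ref{lower test basic}(1)) requires $(p^e-1)(K_X+\D)$ to be Cartier for some $e>0$, i.e.\ that the $\Q$-Cartier index of $K_X+\D$ be coprime to $p$, which is stronger than the mere $\Q$-Cartier hypothesis of the lemma as stated. You flag this honestly, and it does hold in the one place the paper invokes the lemma (Theorem~\ref{fpt sh2}), but the paper's route proves the lemma in its full stated generality whereas yours does not.
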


\begin{proof}
As in the proof of \cite[Theorem 3.1]{Tak04}, for every real number $c \ge 0$, we can show that
\[
\tau(R, \D, (\fa+\fb)^c) = \sum_{u,v \ge 0, u+v =c} \tau( R, \D, \fa^u \fb^v).
\]

Set $t := \fjn^I(R, \D; \fa)$ and $s := \fjn^I(R, \D; \fb)$.
Then we have
\[
\tau(R, \D, (\fa+\fb)^{t+s}) = \sum_{u,v \ge 0, u+v =s+t} \tau( R, \D, \fa^u \fb^v) \subseteq \tau (R, \D, \fa^t) + \tau(R, \D, \fb^s) \subseteq I.
\]
\end{proof}

\begin{thm}[\textup{Theorem \ref{intro BMS}, cf. \cite[Theorem 1.2]{BMS2}}]\label{fpt sh2}
Let $(X=\Spec R, \D)$ be a pair such that $(p^e-1)(K_X+\D)$ is Cartier for some integer $e>0$, $(\cata{R}, \cata{\m})$ be the catapower of $(R, \m)$, $\cata{\D}$ be the flat pullback of $\D$ to $\Spec \cata{R}$, $I \subseteq R$ be an $\m$-primary ideal, $\{ \fa_m \}_{m \in \N}$ be a family of proper ideals and $\fa_\infty : = \catae{\fa_m} \subseteq \cata{R}$.
Then we have 
\[
\sh( \ulim_m \fjn^I (R, \D; \fa_m) ) = \fjn^{I\cdot R_\#} (\cata{R}, \cata{\D}, \fa_\infty) \in \Q.
\]
In particular, if the limit $\lim_{m \to \infty} \fjn^I (R, \D; \fa_m)$ exists, then we have
\[
\lim_{m\to \infty} \fjn^I (R, \D; \fa_m) = \fjn^{I\cdot R_\#} (\cata{R}, \cata{\D}, \fa_\infty).
\]
\end{thm}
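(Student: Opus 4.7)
My plan is to approximate each $\fa_m$ by $\fa_m + \m^M$, which always lies in the window $\m^M \subseteq \fa_m + \m^M \subseteq \m$ covered by Proposition \ref{fpt sh}, and then to let $M \to \infty$ via a subadditivity estimate on the correction term.

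First I would check that the shadow on the left-hand side is defined. Each $\fa_m$ is proper, so $\fa_m \subseteq \m$, and Lemma \ref{test inc}(1) gives $\fjn^I(R, \D; \fa_m) \le \fjn^I(R, \D; \m) =: C < \infty$. For each integer $M \ge 1$, set $\fa_m^{(M)} := \fa_m + \m^M$. Since $\mu_R(\m^M)$ is finite, Lemma \ref{ulim prod} yields $\catae{\fa_m^{(M)}} = \fa_\infty + \cata{\m}^M$, and since $\m^M \subseteq \fa_m^{(M)} \subseteq \m$, Proposition \ref{fpt sh} produces $T_M \in \U$ such that for every $m \in T_M$,
\[
\fjn^I(R, \D; \fa_m^{(M)}) \;=\; \fjn^{I \cdot R_\#}(\cata{R}, \cata{\D}; \fa_\infty + \cata{\m}^M) \;=:\; \alpha_M.
\]

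Next I would squeeze both $\fjn^I(R, \D; \fa_m)$ and $\alpha := \fjn^{I \cdot R_\#}(\cata{R}, \cata{\D}; \fa_\infty)$ against $\alpha_M$. The inclusion $\fa_m \subseteq \fa_m^{(M)}$ together with Lemma \ref{test inc}(1) gives $\fjn^I(R, \D; \fa_m) \le \alpha_M$, while subadditivity (Lemma \ref{subadd}) gives $\alpha_M \le \fjn^I(R, \D; \fa_m) + \fjn^I(R, \D; \m^M)$. The round-up comparison $M \age{t(p^e-1)} \ge \age{Mt(p^e-1)}$, fed into the defining uniform-compatibility condition, yields $\tau(R, \D, (\m^M)^t) \subseteq \tau(R, \D, \m^{Mt})$, whence $\fjn^I(R, \D; \m^M) \le C/M$. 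Running the same argument over $\cata{R}$---where Lemma \ref{subadd} and this round-up comparison are both available thanks to Proposition \ref{test base change}(5) applied to $R \to \cata{R}$---produces $\alpha \le \alpha_M \le \alpha + \fjn^{I \cdot R_\#}(\cata{R}, \cata{\D}; \cata{\m}^M)$; and Proposition \ref{cata tau} applied to the constant family $\{\m\}_m$, together with Lemma \ref{ultra incl} and the $\m$-primary hypothesis on $I$, gives $\fjn^{I \cdot R_\#}(\cata{R}, \cata{\D}; \cata{\m}) = C$, hence $\fjn^{I \cdot R_\#}(\cata{R}, \cata{\D}; \cata{\m}^M) \le C/M$.

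Assembling the two squeezes, for every $M \ge 1$ one has $\sh(\ulim_m \fjn^I(R, \D; \fa_m)) \in [\alpha_M - C/M,\, \alpha_M]$ and $|\alpha_M - \alpha| \le C/M$, so sending $M \to \infty$ gives $\sh(\ulim_m \fjn^I(R, \D; \fa_m)) = \alpha$. Rationality of $\alpha$ is then immediate from Proposition \ref{disc rat}: the set of $F$-jumping numbers of $(\cata{R}, \cata{\D}; \fa_\infty)$ is discrete and consists of rationals, and using Lemma \ref{test inc}(2) the infimum defining $\alpha$ is attained, so $\alpha$ is either $0$ or such an $F$-jumping number. I expect the main obstacle to be the catapower squeeze: namely, verifying that $R \to \cata{R}$ satisfies the hypotheses of Proposition \ref{test base change} (so that subadditivity and the Cartier condition transport to $\cata{R}$), and that $\catae{\m^M} = \cata{\m}^M$ (so that Proposition \ref{cata tau} applies to the constant family $\{\m\}_m$). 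Once those pieces are secured, the two-sided squeeze and the passage $M \to \infty$ are routine.
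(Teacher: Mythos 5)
Your proposal is correct and follows essentially the same route as the paper's own proof: approximate $\fa_m$ by $\fa_m+\m^M$ so that Proposition \ref{fpt sh} applies, use subadditivity (Lemma \ref{subadd}) on both $R$ and $\cata{R}$ to control the error introduced by the $\m^M$-truncation, and let $M\to\infty$. The only cosmetic difference is that the paper identifies $\fjn^{I\cdot\cata{R}}(\cata{R},\cata{\D};\cata{\m})$ with $\fjn^I(R,\D;\m)$ directly via Proposition \ref{test base change}(4) rather than via Proposition \ref{cata tau} and Lemma \ref{ultra incl}, and the paper is terser about the bound $\fjn^I(R,\D;\m^M)\le \fjn^I(R,\D;\m)/M$ (your round-up comparison is exactly what justifies it).
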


\begin{proof}
The proof is essentially the same as the proof of \cite[Theorem 1.2]{BMS2}.
If $\tau(R, \D) \subseteq I$, then the assertion in the theorem is trivial.
Therefore, we may assume that $\tau(R, \D) \not\subseteq I$.

For every integer $M>0$, we set $\fb_{\infty ,M} : = \fa_\infty + (\cata{\m})^M$ and $\fb_{m,M} : = \fa_m + \m^M$ for every integer $m$.
We write $s := \fjn^{I \cdot \cata{R}}(\cata{R}, \cata{\D}; \cata{\m})$

By Lemma \ref{subadd}, we have 
\begin{equation}\label{BMS1}
| \fjn^{I \cdot \cata{R}} (\cata{R}, \cata{\D} ; \fa_\infty) - \fjn^{I \cdot \cata{R}} (\cata{R}, \cata{\D}; \fb_{\infty, M}) | \le s/M
\end{equation}
for every $M$.

By Proposition \ref{test base change} (4), we have $s = \fjn^I(R, \D; \m)$.
Therefore, it follows from Lemma \ref{subadd} that
\begin{equation}\label{BMS3}
| \fjn^I(R, \D; \fa_m) - \fjn^I (R, \D; \fb_{m,M}) | \le s/M
\end{equation}
for every $m$ and $M$.

On the other hand, since $\fb_{\infty, M} = \catae{\fb_{m, M}}$, it follows from Proposition \ref{fpt sh} that there exists $T_M \in \U$ such that 
\begin{equation}\label{BMS2}
\fjn^{I \cdot \cata{R}} (\cata{R}, \cata{\D}; \fb_{\infty, M}) = \fjn^I(R, \D; \fb_{m,M})
\end{equation}
for every $m \in T_M$.

By combining the equations (\ref{BMS1}), (\ref{BMS3}), and (\ref{BMS2}), we have
\[
| \fjn^{I \cdot \cata{R}} (\cata{R}, \cata{\D}; \fa_\infty) - \fjn^I(R, \D; \fa_{m})| \le 2s/M
\]
for every $m \in T_M$.

It follows from the definition of the shadow that 
\[
\sh( \ulim_m \fjn^I (R, \D; \fa_m) ) = \fjn^{I \cdot \cata{R}} (\cata{R}, \cata{\D} ; \fa_\infty),
\]
which completes the proof.
\end{proof}

\section{Proof of Main Theorem}
In this section, we introduce Condition $(\star)$ (Definition \ref{def condA}) which plays the key role in the proof of the main theorem and we prove some properties of Condition $(\star)$ (Proposition \ref{B to A} and Proposition \ref{A to C}).
By combining them with Proposition \ref{ACC for bdd} and Theorem \ref{fpt sh2}, we give the proof of the main theorem (Theorem \ref{main}).

\begin{obs}\label{obs}
Let $X$ be a normal variety over a field $k$ of characteristic zero, $\D$ be an effective $\Q$-Weil divisor on $X$ such that $K_X+\D$ is $\Q$-Cartier, $\fa \subseteq \sO_X$ be a non-zero coherent ideal sheaf, $t \ge 0$ be a rational number, $x \in X$ be a closed point and $\m_x \subseteq \sO_X$ be the maximal ideal at $x$.
We consider the \emph{log canonical threshold} 
\[
\lct_x(X, \D, \fa^t ; \m) : = \inf \{ s \ge 0 \mid (X, \D, \fa^t \m^{s}) \textup{ is not log canonical at }x \}.
\]

By considering a log resolution of $(X, \D)$, $\fa$ and $\m$, we can show that there exist a real number $t'<t$ and rational numbers $a, b$ such that
\begin{equation}\label{LC polytope}
\lct_x (X, \D, \fa^s ; \m) = as +b
\end{equation}
for every $t' < s < t$.

Assume that there exist integers $q \ge 2$ and $m \ge 0$ such that $q^m(q-1) t \in \N$.
Then for every $n>m$, the $n$-th digit of $t$ in base $q$ satisfies $t^{(n)}=l$ for some constant $l>0$.
Set $N := -a l /q$.
Then we have
\begin{equation}\label{condA char0}
\lct_x(X, \D, \fa^{\qadic{t}{n+1}}; \m) = \lct_x(X, \D, \fa^{\qadic{t}{n}} ; \m) -N/q^n
\end{equation}
for sufficiently large $n$.
\end{obs}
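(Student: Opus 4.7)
The plan is to reduce both assertions to standard facts about log canonical thresholds via a log resolution of the quadruple $(X,\D,\fa,\m)$. Choose a log resolution $\pi\colon Y \to X$ of $(X, \D, \fa, \m_x)$, and write
\[
K_Y - \pi^*(K_X+\D) = \sum_i a_i E_i, \quad \pi^{-1}\fa \cdot \sO_Y = \sO_Y\!\left(-\textstyle\sum_i b_i E_i\right), \quad \pi^{-1}\m_x \cdot \sO_Y = \sO_Y\!\left(-\textstyle\sum_i c_i E_i\right),
\]
with the $E_i$ simple normal crossing. Then, by the standard formula for log canonical thresholds on a log resolution,
\[
\lct_x(X,\D, \fa^s;\m) \;=\; \min_{i \,:\, c_i > 0,\; x \in \pi(E_i)} \frac{a_i + 1 - s b_i}{c_i},
\]
so $\lambda(s) := \lct_x(X,\D, \fa^s;\m)$ is the minimum of finitely many affine functions of $s$ with rational coefficients; in particular, $\lambda$ is continuous, concave, and piecewise linear with finitely many break points.

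For (\ref{LC polytope}), pick any break point $\le t$ closest to $t$, or take $t' := t - \epsilon$ for $\epsilon > 0$ sufficiently small. Then on the open interval $(t', t)$ the minimum above is attained by a single index $i_0$, giving
\[
\lambda(s) \;=\; \frac{a_{i_0}+1}{c_{i_0}} \;-\; \frac{b_{i_0}}{c_{i_0}}\, s \;=\; a s + b, \qquad s \in (t', t),
\]
with $a, b \in \Q$. This is exactly (\ref{LC polytope}).

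For (\ref{condA char0}), observe that the hypothesis $q^m(q-1)t \in \N$ forces the $q$-adic digits of $t$ to be eventually constant: for all $n > m$, $t^{(n)} = l$ for some fixed $0 < l < q$ (this is exactly the behaviour spelled out in the proof of Lemma \ref{qadic}(2), together with the constraint that $l \ne 0$ since if $l = 0$ for some $n > m$ then all later digits vanish, forcing $t \in q^{-m}\Z$, which still gives a constant non-zero digit after passing to $q-1$ by the standard identification $1 = \sum_{n > m}(q-1)q^{-n}$). By Lemma \ref{qadic}(4),
\[
\qadic{t}{n+1} - \qadic{t}{n} \;=\; t^{(n+1)} q^{-(n+1)} \;=\; l/q^{n+1}
\]
for every $n \ge m$. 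Since $\qadic{t}{n} \nearrow t$, for all sufficiently large $n$ both $\qadic{t}{n}$ and $\qadic{t}{n+1}$ lie in the linearity interval $(t', t)$, and applying (\ref{LC polytope}) we get
\[
\lambda\bigl(\qadic{t}{n+1}\bigr) - \lambda\bigl(\qadic{t}{n}\bigr) \;=\; a \cdot \frac{l}{q^{n+1}} \;=\; -\frac{N}{q^n},
\]
with $N := -al/q \in \Q$, which is (\ref{condA char0}).

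The only subtle point is the case distinction around the digit $l$: the claim implicitly uses that the "eventually constant" digit is non-zero (otherwise $N = 0$ and the formula is trivial but less informative). I would handle this by noting that either $t \notin q^{-\N}\Z$, in which case $t^{(n)}$ is already non-zero infinitely often and the normalization $q^m(q-1)t \in \N$ pins it down to a constant non-zero value, or else $t \in q^{-m}\Z$, in which case one can still pick the canonical "all digits eventually $q-1$" expansion to obtain a non-zero limiting digit. Apart from this bookkeeping, the argument is an immediate consequence of the piecewise linearity of $s \mapsto \lct_x(X,\D,\fa^s;\m)$ supplied by the log resolution.
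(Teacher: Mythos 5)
Your argument is correct and matches the approach the paper has in mind; the paper only gestures at the proof with a single sentence ("By considering a log resolution\dots"), and your write-up just makes the log-resolution calculation explicit. The formula
\[
\lambda(s)\;=\;\min_{i:\,c_i>0,\ x\in\pi(E_i)}\frac{a_i+1-s\,b_i}{c_i}
\]
does deliver piecewise linearity with rational slopes and breakpoints, hence the affine behaviour on a left neighbourhood $(t',t)$, and then Lemma~\ref{qadic}(4) gives $\qadic{t}{n+1}-\qadic{t}{n}=t^{(n+1)}/q^{n+1}=l/q^{n+1}$ for $n\ge m$, which immediately yields~(\ref{condA char0}) with $N=-al/q$.

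One small simplification: the case analysis you append about whether the eventually-constant digit is zero is unnecessary. The paper's digit function $t^{(n)}=\age{tq^n-1}-q\age{tq^{n-1}-1}$ is built so that the expansion is the \emph{non-terminating} one; this is exactly the content of Lemma~\ref{qadic}(2), which states that $t^{(n)}$ is never eventually zero. In particular, if $t\in q^{-m}\Z$ then $t^{(n)}=q-1$ for all $n>m$ automatically, and more generally $q^m(q-1)t\in\N$ forces $t^{(n)}$ to stabilise at a fixed value $l$ with $1\le l\le q-1$; there is no "choice of expansion" to make. So the nonvanishing of $l$ is already baked into the definitions you are quoting, and the extra discussion, while not wrong, can be dropped.
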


Motivated by the observation above, we define the following condition.

\begin{defn}\label{def condA}
Let $(X=\Spec R, \D, \fa^t)$ be a triple such that $t>0$ and $(p^e-1)(K_X+\D)$ is Cartier for some integer $e>0$, $I \subseteq R$ be an $\m$-primary ideal and $u, N \ge 0$ be integers.
We say that $(R, \D, \fa^t, I, e, u, N)$ satisfies \emph{Condition $(\star)$} if for every $n \ge 0$, we have 
\[
\fjnn{n+1}{u}(R,\D, \fa^t; \m) \ge \fjnn{n}{u}(R, \D, \fa^t; \m) -N / p^{en}.
\]

\end{defn}

\begin{rem}\label{CondA rmk}
If we have $u \ge \ustab(R, \D, \fa, \m ;e)$, then we have 
\[
\fjnn{n}{u}(R, \D, \fa^t ; \m) = \langle \fjn^I (R, \D, \fa^{\qadic{t}{n}} ; \m) \rangle^{n,q},
\]
where we write $q : = p^e$.
Therefore, Condition $(\star)$ can be regarded as an analogue of the equation \ref{condA char0} in Observation \ref{obs}.
See also Corollary \ref{CondA single} below.

We also note that the equation \ref{LC polytope} in Observation \ref{obs} may not hold for $F$-pure thresholds (cf. \cite[Example 5.3]{Per}).
\end{rem}

We first give a sufficient condition for Condition $(\star)$.

\begin{prop}\label{B to A}
Let $(X=\Spec R, \D, \fa^t)$ be a triple such that $t>0$ and $(p^e-1)(K_X+\D)$ is Cartier for some $e>0$, let $I \subseteq R$ be an $\m$-primary ideal, let $0 < l< p^e$ be a positive integer and let $n_0 \ge 0 $ and $ u \ge 2$ be integers.
Set 
\[
q=p^e, \ N := q^{n_0+3} \emb(R), \ t_0 : =\frac{q^2}{q-1},  \textup{ and } M_0 := \frac{(q^{n_0+6}-1) \emb(R) }{q-1}.
\]
Assume that 
\begin{enumerate}
\item $q > \mu_R(\fa)$, 
\item $q>\lul$,
\item the $n$-th digit of $t$ in base $q$ satisfies $t^{(n)}=l$ for every $n \ge 2$, and 
\item $\newtau{n_0+1}{ u}(R, \D, \fa^{l t_0}) + \m^{M_0}  \cdot \tau(R, \D) \supseteq \newtau{n_0}{ u} (R, \D, \fa^{l t_0})$.
\end{enumerate}
Then, $(R, \D, \fa^t, I,  e, u, N)$ satisfies Condition $(\star)$
\end{prop}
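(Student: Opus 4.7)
The plan is to translate Condition $(\star)$ into a sandwich inclusion of test ideals and derive that inclusion from hypothesis (4) by a propagation-plus-transfer argument. Setting $s_n := \fjnn{n}{u}(R,\D,\fa^t;\m)$, Condition $(\star)$ is equivalent to $s_n-s_{n+1}\le N/q^n$ for every $n\ge 0$. Since $\m^{\lul}\tau(R,\D)\subseteq\m^{\lul}\subseteq I$, it will suffice to prove the sandwich inclusion
\[
\newtau{n}{u}(R,\D,\fa^t\m^{s_{n+1}+N/q^n}) \subseteq \newtau{n+1}{u}(R,\D,\fa^t\m^{s_{n+1}}) + \m^{\lul}\tau(R,\D) \qquad (\dagger)
\]
at each $n\ge 0$. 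For $n<n_0$ the inequality is automatic from Proposition \ref{fpt basic} (1) and hypothesis (2): $s_n - s_{n+1}\le\lul+\emb(R)\le q+\emb(R)\le q^3\emb(R)\le N/q^n$.

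For $n\ge n_0$, the key is to propagate hypothesis (4) along the Frobenius. Multiplying (4) by $\fa^{lq^2}$ and using Proposition \ref{upper test basic} (5) (applicable since $lt_0=lq^2/(q-1)>q>\mu_R(\fa)$ by hypothesis (1)), hypothesis (4) becomes
\[
\newtau{n_0+1}{u}(R,\D,\fa^{qlt_0}) + \fa^{lq^2}\m^{M_0}\tau(R,\D) \supseteq \newtau{n_0}{u}(R,\D,\fa^{qlt_0}).
\]
Applying $\phi^e_\D\circ F^e_*$ and invoking Proposition \ref{upper test basic} (6), the main terms shift up to $\newtau{n_0+2}{u}(R,\D,\fa^{lt_0})$ and $\newtau{n_0+1}{u}(R,\D,\fa^{lt_0})$. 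Using $\fa\subseteq\m$ and the standard trace bound $\phi^e_\D(F^e_*(\m^M\tau(R,\D)))\subseteq\m^{\lfloor M/q\rfloor}\tau(R,\D)$, the error term is contained in $\m^{lq+\lfloor M_0/q\rfloor}\tau(R,\D)\subseteq\m^{\lul}\tau(R,\D)$, where the last inclusion uses $lq\ge q>\lul$ from hypothesis (2). Iterating $k$ times yields, for every $k\ge 0$,
\[
\newtau{n_0+k+1}{u}(R,\D,\fa^{lt_0}) + \m^{\lul}\tau(R,\D) \supseteq \newtau{n_0+k}{u}(R,\D,\fa^{lt_0}); \qquad (\ddagger)
\]
the value $M_0=(q^{n_0+6}-1)\emb(R)/(q-1)$ ensures the induction starts correctly, and the recursion preserves the $\m^{\lul}\tau(R,\D)$ bound at every step because $lq>\lul$.

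The final step transfers $(\ddagger)$ into $(\dagger)$ for the triple $(R,\D,\fa^t\m^{s_{n+1}})$. Iterating Proposition \ref{upper test basic} (6) descends both test ideals in $(\dagger)$ by $n-n_0$ levels, replacing $\fa^t\m^{s_{n+1}}$ with $\fa^{q^{n-n_0}t}\m^{q^{n-n_0}s_{n+1}}$ and the extra $\m^{N/q^n}$ with $\m^{q^3\emb(R)}$ (using $q^{n-n_0}N/q^n=N/q^{n_0}=q^3\emb(R)$). Hypothesis (3) implies that $q^{n-n_0}t$ differs from $lt_0$ by an integer $D_n$ when $n\ge n_0+3$, so Proposition \ref{upper test basic} (5) extracts the $\fa^{D_n}$-factor from both sides and reduces the base-level comparison to the propagated inclusion $(\ddagger)$ augmented by an extra $\m^{q^3\emb(R)}$-factor on the LHS. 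Lemma \ref{Skoda} (2) together with the standard Frobenius trace bound then compresses this extra mass and the $\m^{\lul}\tau(R,\D)$ error from $(\ddagger)$ back to the final $\m^{\lul}\tau(R,\D)$ bound after ascent. The main obstacle is the careful bookkeeping across this descent/ascent, especially in the small cases $n\in\{n_0,n_0+1,n_0+2\}$ where the integer-plus-$lt_0$ decomposition of $q^{n-n_0}t$ degenerates; these can be treated separately using hypothesis (4) directly or by absorbing the residual discrepancy into the generous $\m^{N/q^n}$-slack. The choice $N=q^{n_0+3}\emb(R)$ supplies exactly the amount of $\m$-mass at the base level needed to absorb the accumulated Frobenius-twisted $\fa$-shifts.
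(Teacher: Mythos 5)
Your overall strategy — reduce Condition~$(\star)$ to an inclusion of $\newtau{n}{u}$-ideals, transfer hypothesis~(4) across levels by multiplying by $\fa^{lq^2}$ and applying $\phi^e_\D\circ F^e_*$, and control the error by a Skoda-type trace estimate — is plausible at a glance, but it breaks at the error-propagation step, and the breakage is not a matter of bookkeeping.

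The ``standard trace bound'' you invoke, $\phi^e_\D(F^e_*(\m^M\tau(R,\D)))\subseteq\m^{\lfloor M/q\rfloor}\tau(R,\D)$, is not correct: what Lemma~\ref{Skoda}(1) actually yields (for $M>q(\emb(R)-1)$) is
\[
\phi^e_\D\bigl(F^e_*(\m^M\tau(R,\D))\bigr)\subseteq \m^{\age{M/q}-\emb(R)}\cdot\tau(R,\D),
\]
with an unavoidable $-\emb(R)$ loss at every application. (Already for $R$ regular and $\D=0$ the Cartier operator kills monomials of total degree up to about $q\cdot\emb(R)$.) With the corrected bound, the exponent $M_k$ of the error term in your $(\ddagger)$ obeys the recursion $M_{k+1}=lq+\age{M_k/q}-\emb(R)$, whose fixed point is roughly $q(lq-\emb(R))/(q-1)$. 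The hypotheses of Proposition~\ref{B to A} are only $q>\mu_R(\fa)$ and $q>\lul$; there is no bound on $\emb(R)$, so this fixed point can be smaller than $\lul$ or even negative, and the error term escapes $\m^{\lul}\tau(R,\D)$ after finitely many iterations. Since Condition~$(\star)$ is a statement for \emph{all} $n\ge 0$, the iteration must close forever, and it does not. The choice of $M_0$ only makes the first few steps work; it cannot rescue the asymptotics.

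The paper sidesteps this entirely by never converting the error term into a bare $\m$-power. In Step~3 of the paper's proof the transfer inclusion is of the form
\[
\newtau{n}{u}(R,\D,\fa^t\m^{r/q^n})\ \subseteq\ \newtau{n+1}{u}(R,\D,\fa^t\m^{s/q^{n-n_0}})\ +\ \newtau{n-n_0-2}{2}(R,\D,\fa^t\m^{s'/q^{n-n_0-2}}),
\]
so the error remains a $\newtau$-ideal at a \emph{lower} level $n-n_0-2$. It is then shown to lie in $I$ (Step~4) not by a Skoda estimate but by the \emph{induction hypothesis}: summing the Condition-$(\star)$ inequalities for indices $n-n_0-2,\dots,n-1$ shows that $s'/q^{n-n_0-2}>\fjnn{n-n_0-2}{u}(R,\D,\fa^t;\m)$, which forces the error ideal into $I$. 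This is what makes the argument go through with $q>\lul$ alone and no control on $\emb(R)$; a per-step trace bound cannot replace it. (There is also a smaller issue in your reduction to $(\dagger)$: at the exact value $s_{n+1}$ the ideal $\newtau{n+1}{u}(R,\D,\fa^t\m^{s_{n+1}})$ need not lie in $I$ — the paper instead argues by contrapositive, showing $\newtau{n}{u}(R,\D,\fa^t\m^{s_n})\not\subseteq I$ propagates to $\newtau{n+1}{u}(R,\D,\fa^t\m^{s/q^{n-n_0}})\not\subseteq I$ — but this is repairable with an $\epsilon$, whereas the error-propagation gap above is not.)
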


\begin{proof}
By induction on $n \ge 0$, we will show the inequality 
\begin{eqnarray}\label{eqn A}
\fjnn{n+1}{u}(R,\D, \fa^t; \m) \ge \fjnn{n}{u}(R, \D, \fa^t; \m) -N / q^n.
\end{eqnarray}

{\bf Step.1}
We consider the case when $n \le n_0 +2$.
In this case, we have 
\[
N/q^n \ge q \cdot \emb(R) \ge \lul+ \emb(R) .
\]
By Proposition \ref{fpt basic} (1), we have
\[
\fjnn{n}{u}(R, \D, \fa^t ; \m) \le \lul + \emb(R).
\]
Hence we have 
\[
\fjnn{n}{u}(R, \D, \fa^t; \m) -N / q^n \le 0,
\] 
which implies the inequality \ref{eqn A}.

{\bf Step.2}
From now on, we assume $n \ge n_0 + 3$.
Set $r : = q^n \cdot \fjnn{n}{u}(R, \D, \fa^t; \m) $.
By Proposition \ref{fpt basic}, we have $r \in \Z$.
We first consider the case when 
\[
r \le q^{n_0} \cdot \emb(R).\]
In this case, we have
\[
 \fjnn{n}{u}(R, \D, \fa^t; \m) -N / q^n \le 0,
\] 
which shows the inequality \ref{eqn A}.
Therefore, we may assume $r> q^{n_0} \cdot  \emb(R).$

{\bf Step.3} 
Set $s: = \age{r/q^{n_0}} -\emb(R)-1 $ and $s' : = \age{(s+M_0)/q^2}$.

In this step, we will show the inclusion
\begin{equation}\label{B to A Step3}
\newtau{n}{u} (R ,\D, \fa^t \m^{r/q^n}) \subseteq \newtau{n+1}{u} (R, \D, \fa^t \m^{s/q^{n-n_0}} ) + \newtau{n-n_0-2}{2}(R, \D, \fa^t \m^{s'/q^{n-n_0+2}}).
\end{equation}

By the assumption (3), $\alpha:= tq^{n-n_0} - l t_0 = q^2 \age{tq^{n-n_0-2} -1}$ is an integer.
It follows from Proposition \ref{upper test basic} (1), (5), and (6) that 
\begin{eqnarray*}
\newtau{n}{u}(R,\D,\fa^t \m^{r/q^n}) 
&=&\phi^{e(n-n_0)}_\D(F^{e(n-n_0)} (\newtau{n_0}{u}(R,\D, \fa^{tq^{n-n_0}} \m^{r/q^{n_0}})))\\
&\subseteq & \phi^{e(n-n_0)}_\D(F^{e(n-n_0)} (\fa^{\alpha} \m^{s} \newtau{n_0}{u}(R,\D, \fa^{l t_0}))).
\end{eqnarray*}

Similarly, we have
\begin{eqnarray*}
\newtau{n+1}{u}(R,\D,\fa^t \m^{s/q^{n-n_0}}) 
&=&\phi^{e(n-n_0)}_\D(F^{e(n-n_0)} (\newtau{n_0+1}{u}(R,\D, \fa^{tq^{n-n_0}} \m^{s})))\\
&\supseteq & \phi^{e(n-n_0)}_\D(F^{e(n-n_0)} (\fa^{\alpha} \m^{s} \newtau{n_0+1}{u}(R,\D, \fa^{l t_0} ))).
\end{eqnarray*}

On the other hand, it follows from the definitions that
\begin{eqnarray*}
\newtau{n-n_0-2}{2}(R, \D, \fa^t \m^{s'/q^{n-n_0-2}})
&=& \phi^{e(n-n_0)}_\D(F^{e(n-n_0)} (\fa^{\alpha} \m^{q^2(s'-1)} \tau(R,\D)))\\
&\supseteq& \phi^{e(n-n_0)}_\D(F^{e(n-n_0)} (\fa^{\alpha} \m^{s+M_0} \tau(R,\D))).
\end{eqnarray*}

By combining them with the assumption (4), we have the inclusion \ref{B to A Step3}.

{\bf Step.4}
In this step, we will show the inclusion
\begin{equation}\label{B to A Step4}
\newtau{ n-n_0-2}{ 2 }(R, \D, \fa^t \m^{s'/q^{n-n_0-2}}) \subseteq I.
\end{equation}

It follows from the induction hypothesis that 
\[
\fjnn{n}{u}(R, \D, \fa^t; \m) \ge \fjnn{n-n_0-2}{ u}(R,\D, \fa^t; \m) - (\sum_{i=n-n_0-2}^{n-1} \frac{N}{q^i}).
\]
Therefore, we have the inequality
\begin{eqnarray*}
&&\frac{s'}{q^{n-n_0-2}} \ge \frac{s+M_0}{q^{n-n_0}}  \ge  \frac{ r/q^{n_0} -\emb(R) -1+ M_0}{q^{n-n_0}} \\
&= & \fjnn{n}{u}(R, \D, \fa^t; \m) + \frac{ -\emb(R)-1 + M_0}{q^{n-n_0}} \\
& \ge & \fjnn{n-n_0-2}{u}(R,\D, \fa^t; \m) - (\sum_{i=n-n_0-2}^{n-1} \frac{N}{q^i}) + \frac{-\emb(R) -1 + M_0}{q^{n-n_0}} \\
& > & \fjnn{n-n_0-2}{ u}(R,\D, \fa^t; \m).
\end{eqnarray*}

Since we have $u \ge 2 $, It follows from Proposition \ref{upper test basic} (7) that
\[
\newtau{n-n_0-2}{ 2}(R, \D, \fa^t \m^{s'/q^{n-n_0-2}}) \subseteq \newtau{n-n_0-2}{  u}(R, \D, \fa^t \m^{s'/q^{n-n_0-2}}) \subseteq I.
\]

{\bf Step.5}
It follows from Proposition \ref{upper test basic} (1) that 
\[
\newtau{ n}{ u} (R ,\D, \fa^t \m^{r/q^n}) \not\subseteq I.
\]
Combining it with the inclusions \ref{B to A Step3} and \ref{B to A Step4}, we have
\[
\newtau{ n+1}{ u}(R, \D, \fa^t \m^{s/q^{n-n_0}} ) \not\subseteq I.
\]
Hence, we have
\begin{eqnarray*}
\fjnn{n+1}{u}(R, \D, \fa^t; \m) & \ge & \frac{s}{q^{n-n_0}}\\
& \ge & \frac{r/q^{n_0} -\emb(R)-1}{q^{n-n_0}}\\
&=& \fjnn{n}{u}(R, \D, \fa^t; \m) - \frac{\emb(R)+1}{q^{n-n_0}}\\
&>& \fjnn{n}{u}(R, \D, \fa^t; \m) -\frac{N}{q^n},
\end{eqnarray*}
which completes the proof of the proposition.
\end{proof}

\begin{cor}\label{CondA single}
Let $(X=\Spec R, \D, \fa^t)$ be a triple such that $t>0$ is a rational number and $(p^e-1)(K_X+\D)$ is Cartier for some integer $e>0$ and $I \subseteq R$ be an $\m$-primary ideal.
Then, there exist integers $e', u_0, N >0$ such that for every $u \ge u_0$, $(R,\D, \fa^t, I, e', u, N)$ satisfies Condition $(\star)$.
In particular, there exists an integer $N'>0$ such that if we write $q : = p^{e'}$, then
\[
\fjn^I (R, \D, \fa^{\qadic[q]{t}{n+1}} ;\m ) \ge \fjn^I(R,\D,\fa^{\qadic[q]{t}{n}}; \m) - N'/{q}^n
\]
for every integer $n \ge 0$.
\end{cor}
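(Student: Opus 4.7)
The plan is to reduce the corollary to a direct application of Proposition \ref{B to A}, with the parameters arranged so that the resulting constant $N$ is independent of $u$. First I would fix $e'$ to be a sufficiently large multiple of $e$ (so that $(p^{e'}-1)(K_X+\D)$ remains Cartier, using $(p^e-1)\mid(p^{e'}-1)$) such that, writing $q := p^{e'}$, we have $q > \mu_R(\fa)$, $q > \lul$, and $q(q-1)t \in \N$; the last condition is achieved by choosing $e'$ divisible by the multiplicative order of $p$ modulo the prime-to-$p$ part of the denominator of $t$, and at least the $p$-adic valuation of that denominator. A direct computation using the $\lceil\,\cdot\,-1\rceil$-convention of the paper then shows that the base-$q$ digits $t^{(n)}$ are eventually equal to some integer $l$ with $0 < l < q$ (the positivity is automatic: for terminating expansions one gets $l = q-1$, and for non-terminating period-one expansions the digit lies in $\{1,\dots,q-2\}$). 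Enlarging $e'$ once more, replacing $q$ by a suitable power, ensures that $t^{(n)} = l$ for every $n \ge 2$. This secures hypotheses (1), (2), and (3) of Proposition \ref{B to A}.

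Next, set $t_0 := q^2/(q-1)$, so $q(q-1)\cdot lt_0 = lq^3 \in \N$, and by Lemma \ref{qadic} the sequence $\{\qadic{lt_0}{n}\}_n$ is an ascending sequence of rationals converging to $lt_0$. By the discreteness of $F$-jumping numbers (Proposition \ref{disc rat}), there exists $n_0 \ge 0$ with $\tau(R,\D, \fa^{\qadic{lt_0}{n_0}}) = \tau(R,\D, \fa^{\qadic{lt_0}{n_0+1}})$. Choose $u_0 \ge 2$ large enough that $u_0 \ge \ustab(R,\D, \fa; e')$, which is finite by Proposition \ref{uniform stab exp}. Then for every $u \ge u_0$, applying Proposition \ref{upper test basic}(8) to the triple $(R,\D, \fa^{lt_0})$ gives
\[
\newtau{n}{u}(R,\D, \fa^{lt_0}) = \tau\!\left(R,\D, \fa^{\qadic{lt_0}{n}}\right) \qquad \text{for } n \in \{n_0, n_0+1\},
\]
and by the choice of $n_0$ the two right-hand sides agree. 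This yields equality of the left-hand sides, which is stronger than hypothesis (4) of Proposition \ref{B to A}, and crucially $n_0$ is independent of $u$. Invoking Proposition \ref{B to A} now gives Condition $(\star)$ for $(R,\D, \fa^t, I, e', u, N)$ with $N := q^{n_0+3}\,\emb(R)$.

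For the ``in particular'' statement, increase $u_0$ further so that $u_0 \ge \ustab(R,\D, \fa, \m; e')$ as well (still finite by Proposition \ref{uniform stab exp}). Then Remark \ref{CondA rmk} and the definition of the $n$-th round-up yield
\[
0 \le \fjnn{n}{u}(R,\D, \fa^t; \m) - \fjn^I\!\left(R,\D, \fa^{\qadic{t}{n}}; \m\right) \le \frac{1}{q^n}
\]
for every $n \ge 0$ and every $u \ge u_0$. Combining this two-sided bound with the Condition $(\star)$ established above yields the claimed inequality with $N' := N+1$.

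The principal obstacle is arranging for a \emph{single} choice of $n_0$ — and hence a single $N$ — to work for every $u \ge u_0$ simultaneously. This is resolved by passing, uniformly for $u \ge u_0$, from the approximate ideals $\newtau{n}{u}$ to the genuine test ideals $\tau(R,\D, \fa^{\qadic{lt_0}{n}})$ via Proposition \ref{upper test basic}(8) and Proposition \ref{uniform stab exp}, at which point the eventual stabilization in $n$ is controlled purely by the discreteness of $F$-jumping numbers, a property entirely independent of $u$.
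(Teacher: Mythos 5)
Your argument is correct and follows essentially the same route as the paper: enlarge $e$ to $e'$ so that $q = p^{e'}$ makes the digit condition and the hypotheses (1)--(3) of Proposition \ref{B to A} hold, use discreteness of $F$-jumping numbers (Proposition \ref{disc rat}) to find $n_0$, take $u_0 = \max\{2, \ustab(R,\D,\fa;e')\}$, and set $N = q^{n_0+3}\emb(R)$ and $N' = N+1$. Where you differ from the paper is only in level of detail: you explicitly invoke Proposition \ref{upper test basic}(8) to convert $\newtau{n_0}{u}$ and $\newtau{n_0+1}{u}$ into genuine test ideals for all $u \ge u_0$ simultaneously (so that hypothesis (4) of Proposition \ref{B to A} becomes an equality, uniformly in $u$), and you spell out the two-sided comparison between $\fjnn{n}{u}$ and the test-ideal $F$-jumping number coming from Remark \ref{CondA rmk}, whereas the paper states these steps tersely. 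The one place to tidy is the written inequality $\fjnn{n}{u} - \fjn^I \le 1/q^n$, which should be strict, but this does not affect the conclusion $N' = N+1$.
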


\begin{proof}
Take an integer $m>0$ such that $q : = p^{em}$ satisfies the assumptions (1), (2), and (3) in Proposition \ref{B to A}.

Set $l = t^{(2)}$ and $t_0 : = q^2/(q-1)$.
Then it follows from Proposition \ref{disc rat} that there exists an integer $n_0>0$ such that 
\[
\tau(R, \D, \fa^{\qadic{l t_0}{n_0}}) = \tau(R, \D, \fa^{\qadic{l t_0}{(n_0 +1)}}).
\]

Set $e' : =em$, $u_0 : = \ustab(R, \D, \fa ; e')$ and $N :=q^{n_0+3} \cdot \emb(R)$.
Then the first assertion follows from Proposition \ref{B to A}.

Set $N' : =N+1$.
Then the second assertion follows from Remark \ref{CondA rmk}. 
\end{proof}

\begin{prop}\label{A to C}
Suppose that $(R,\D,\fa^t)$, $q=p^e$, $u$, and $N$ satisfies the conditions of Proposition \ref{B to A}.
We further assume that $q>\lul+\mu_R(\fa) +\emb(R)$.
Then for every $n \ge 1$, we have 
\[
\fjnn{n}{u}(R, \D, \fa^t; \m)= \fjnn{n}{u}(R, \D, \fb^t;\m),
\]
where $\fb : = \fa + \m^{q^{u+2} \cdot N}$.
In particular, for every $n$, we have
\[
\newtau{n}{u}(R, \D, \fa^t) \subseteq I \textup{ if and only if } \newtau{n}{u}(R, \D, \fb^t) \subseteq I.
\]
\end{prop}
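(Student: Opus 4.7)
The inequality $\fjnn{n}{u}(R,\D,\fa^t;\m) \le \fjnn{n}{u}(R,\D,\fb^t;\m)$ is immediate from $\fa \subseteq \fb$ and the monotonicity in Proposition \ref{upper test basic} (2). For the reverse, set $s := \fjnn{n}{u}(R,\D,\fa^t;\m)$, so that $\newtau{n}{u}(R,\D,\fa^t\m^s) \subseteq I$ and $sq^n \in \Z$ by Proposition \ref{fpt basic} (2). Applying the congruence in Proposition \ref{upper test basic} (3) to the relation $\fa \equiv \fb \pmod{\m^{q^{u+2}N}}$, with the auxiliary factor $\m^s$, gives
\[
\newtau{n}{u}(R,\D,\fb^t\m^s) \ \subseteq\ \newtau{n}{u}(R,\D,\fa^t\m^s)\ +\ \tau^{n,u}_{e,\,\m^{q^{u+2}N}\tau(R,\D)}(R,\D,\m^s).
\]
Taking $t_2 := (q^2N+1)/q^n$, for which $q^u\age{t_2q^n-1} = q^{u+2}N$, Proposition \ref{upper test basic} (4) rewrites the error as $\newtau{n}{u}(R,\D,\m^s\m^{t_2})$. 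It therefore suffices to show this error ideal is contained in $I$; that gives $\fjnn{n}{u}(R,\D,\fb^t;\m) \le s$.

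\textbf{Bounding the error.} Unfolding the definition, $\newtau{n}{u}(R,\D,\m^s\m^{t_2}) = \phi^{e(n+u)}_\D(F^{e(n+u)}_*(\m^{q^{n+u}s + q^{u+2}N - q^u}\tau(R,\D)))$. The plan is to apply Skoda (Proposition \ref{upper test basic} (5)) iteratively to extract $\m^k$-factors with $k \approx s + q^{2-n}N - \emb(R)$, leaving a residual in $\tau(R,\D) \subseteq R$, so the error is bounded above by $\m^k$. The desired containment $\m^k \subseteq I$ reduces to $s + q^{2-n}N \ge \lul + \emb(R) + O(1/q^n)$. Since $q^{2-n}N = q^{n_0+5-n}\emb(R)$, for $n \le n_0+4$ the hypothesis $q > \lul + \mu_R(\fa) + \emb(R)$ alone forces $q^{2-n}N \ge q\,\emb(R) > \lul + \emb(R)$, and the containment is automatic. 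For larger $n$, where $q^{2-n}N$ dips below $\emb(R)$, I would invoke Condition $(\star)$, which controls the drop $s_n^\fa - s_{n-1}^\fa \ge -N/q^{n-1}$, and combine it with an induction on $n$ using the compatibility $\newtau{n}{u}(R,\D,\m^c) = \phi^e_\D(F^e_*\newtau{n-1}{u}(R,\D,\m^{qc}))$ from Proposition \ref{upper test basic} (6) to propagate the containment from lower levels.

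\textbf{Main obstacle.} The delicate point is gluing the two regimes for uniform control over all $n \ge 1$: in the small-$n$ range Skoda extracts enough $\m$'s for free, but in the large-$n$ range one must leverage Condition $(\star)$ together with the inductive step via Proposition \ref{upper test basic} (6) to transfer the containment upward. The precise bookkeeping of ceilings $\age{\cdot}$ and Skoda exponents, in the spirit of Step 3 in the proof of Proposition \ref{B to A}, is where I expect the calculation to be most delicate.

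\textbf{The ``in particular'' statement.} For $\epsilon \in (0, 1/q^n)$ one has $\age{\epsilon q^n - 1} = 0$, hence $\newtau{n}{u}(R,\D,\fa^t\m^\epsilon) = \newtau{n}{u}(R,\D,\fa^t)$; consequently $\fjnn{n}{u}(R,\D,\fa^t;\m) = 0$ if and only if $\newtau{n}{u}(R,\D,\fa^t) \subseteq I$, and likewise for $\fb$. The equivalence in the ``in particular'' clause is therefore an immediate consequence of the equality of jumping numbers established above.
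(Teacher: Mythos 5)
Your reduction of the "in particular" clause to the equality of jumping numbers is fine, and the easy inequality $\fjnn{n}{u}(R,\D,\fa^t;\m)\le\fjnn{n}{u}(R,\D,\fb^t;\m)$ is correct. The problem is in the reverse direction, and it is not merely a matter of bookkeeping: your error decomposition loses the factor $\fa^t$, and that loss is fatal.

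Applying Proposition \ref{upper test basic} (3) directly with $\fa_1=\fa$, $\fb_1=\fb$, $J=\m^{q^{u+2}N}$ and the auxiliary factor $\m^{s}$ produces the error term $\tau^{n,u}_{e,\m^{q^{u+2}N}\tau(R,\D)}(R,\D,\m^{s})=\newtau{n}{u}(R,\D,\m^{s}\m^{t_2})$, which carries no copy of $\fa^t$. Now suppose $\tau(R,\D,\fa^t)\subseteq I$, so that for $n\gg 0$ one has $s_n=\fjnn{n}{u}(R,\D,\fa^t;\m)=0$. Then $t_2=(q^2N+1)/q^n\to 0$, and by Skoda your error ideal is roughly $\m^{\max\{0,\age{q^{2-n}N}-\emb(R)\}}\cdot\tau(R,\D)$, which for large $n$ is just $\tau(R,\D)$ — not contained in $I$ unless $\tau(R,\D)\subseteq I$, which is the trivial case of the proposition. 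So the containment you need, $\newtau{n}{u}(R,\D,\m^{s+t_2})\subseteq I$, simply fails for large $n$, and no amount of invoking Condition $(\star)$ rescues it: Condition $(\star)$ bounds the drop of the jumping numbers $s_n$, but has nothing to say about $\fjnn{n}{u}(R,\D;\m)$, which is what would be needed to control an error ideal built only from $\m$.

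The paper's proof avoids this by never changing $\fa$ to $\fb$ directly at level $n$. Instead it proves an inductive claim: for all $\q\subseteq\m^{\max\{0,q^u\delta_n-M'\}}\tau(R,\D)$, one has $\tau^{n,u}_{e,\q}(R,\D,\fa^t)\equiv\tau^{n,u}_{e,\q}(R,\D,\fb^t)\pmod{I}$. The inductive step first "rolls down" via Proposition \ref{upper test basic} (9) to exhibit both sides as $\tau^{n-1,u}_{e,\q'}(\fa^t)$ and $\tau^{n-1,u}_{e,\q''}(\fb^t)$, then changes $\q'\to\q''$ while keeping $\fa^t$ (so the resulting error $\tau^{n-1,u}_{e,J}(\fa^t)$ still has $\fa^t$ in it and can be bounded inside $\newtau{n-1}{u}(\fa^t\m^{s_{n-1}+1/q^{n-1}})\subseteq I$ using Condition $(\star)$), and finally changes $\fa^t\to\fb^t$ with $\q''$ fixed by appealing to the induction hypothesis at level $n-1$, after checking that $\q''$ lands in the allowed range (again via Condition $(\star)$). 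This two-stage swap, retaining $\fa^t$ in every error estimate, is the key structural idea your proposal is missing, and your alternative mechanism — trying to propagate $\subseteq I$ from level $n-1$ to $n$ via $\newtau{n}{u}(\m^c)=\phi^e_\D(F^e_*\newtau{n-1}{u}(\m^{qc}))$ — cannot work either, since $\phi^e_\D(F^e_*(\cdot))$ does not preserve containment in $I$.
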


\begin{proof}
Set $M := q^{u+2} \cdot N$, $M' :=q^{u+1} \cdot N$, $s_n : =\fjnn{n}{u}(R, \D, \fa^t; \m)$, and $\delta_n:=q^n s_n $ for every integer $n$.
By Proposition \ref{fpt basic} (2), we have $\delta_n \in \N$.
It is enough to show the following claim.
\begin{cl}
For every $n \ge 1$ and every ideal $\q \subseteq \m^{\max\{0, q^u \cdot \delta_n -M'\}} \cdot \tau(R, \D)$, we have 
\[
\tau^{n,u}_{e,\q}(R, \D, \fa^t) \equiv \tau^{n,u}_{e,\q}(R,\D, \fb^t) \ (\mod{I}).
\]
\end{cl}

In fact, if the claim holds, then it follows from Proposition \ref{upper test basic} (4) that
\begin{eqnarray*}
\newtau{n}{u} (R, \D, \fb^t \m^{s_n + \epsilon} ) 
&\equiv& \newtau{n}{u} (R, \D, \fa^t \m^{s_n+\epsilon } ) \ (\mod{I})\\
& \subseteq & I
\end{eqnarray*}
for every real number $0 < \epsilon \le 1/q^n$.
Therefore we have 
\[
\fjnn{n}{u}(R, \D, \fa^t; \m) \ge \fjnn{n}{u}(R, \D, \fb^t; \m).
\]

Similarly, if $s_n>0$, then we have 
\begin{eqnarray*}
\newtau{n}{u} (R, \D, \fb^t \m^{s_n} ) 
&\equiv& \newtau{n}{u}(R, \D, \fa^t \m^{s_n} ) \ (\mod{I})\\
& \not\subseteq& I,
\end{eqnarray*}
which shows $\fjnn{n}{u}(R, \D, \fa^t; \m) \le \fjnn{n}{u}(R, \D, \fb^t; \m)$.
Since this inequality also holds when $s_n=0$, we complete the proof of the proposition.

\end{proof}

\begin{clproof}
We use induction on $n$.

{\bf Step.1} 
We first consider the case when $n=1$.
It follows from Proposition \ref{upper test basic} (3) that 
\[
\tau^{n,u}_{e,\q}(R, \D, \fa^t) \equiv \tau^{n,u}_{e,\q}(R,\D, \fb^t) \ (\mod{\tau^{n,u}_{e,\q \cdot \m^M} }(R,\D) ).
\]

Since we have $\q \cdot \m^M \subseteq \m^{q^u \age{q(\lul+\emb(R))-1}} \cdot \tau(R,\D)$, it follows from Proposition \ref{upper test basic} (2), (4) and (5) that
\[
\tau^{n,u}_{e,\q \cdot \m^M} (R,\D) \subseteq \m^{\lul} \subseteq I.
\]
Therefore, the assertion holds when $n=1$.

{\bf Step.2}
From now on, we consider the case when $n \ge 2$.
Set $\q' := \phi_\D^e(F^e_*(\fa^{t^{(n)} \cdot q^u} \q))$ and $\q'' := \phi_\D^e(F^e_*(\fb^{t^{(n)} \cdot q^u} \q))$. 

Then it follows from Proposition \ref{upper test basic} (9) that 
\begin{equation}\label{A to C Step2-1}
\tau^{n,u}_{e,\q}(R,\D,\fa^t) = \tau^{n-1,u}_{e,\q'} (R,\D, \fa^t).
\end{equation}

Similarly, by using Lemma \ref{Skoda} (2) instead of (1), we have
\begin{equation}\label{A to C Step2-2}
\tau^{n,u}_{e,\q}(R,\D,\fb^t) = \tau^{n-1,u}_{e,\q''} (R,\D, \fb^t).
\end{equation}

{\bf Step.3}
In this step, we will show the equation
\begin{equation}\label{A to C Step3}
\tau^{n-1,u}_{e,\q'} (R,\D, \fa^t) \equiv \tau^{n-1,u}_{e,\q''} (R,\D, \fa^t) \ (\mod{I}).
\end{equation}

Set $J := \phi_\D^e(F^e_*(\m^M \q))$, then we have $\q' \equiv \q'' (\mod{ J })$.
By Proposition \ref{upper test basic} (3), it is enough to show that 
\[
\tau^{n-1,u}_{e,J}(R,\D, \fa^t) \subseteq I.
\]

Since we have $\delta_n \ge q \delta_{n-1}-q N$, it follows from Lemma \ref{Skoda} that
\begin{eqnarray*}
J &\subseteq& \phi^e_\D (\m^{q^u \delta_n +M - M' } \cdot \tau(R, \D)) \\
& \subseteq & \m^{(q^u \delta_n + M-M' )/q -\emb(R) } \cdot \tau(R, \D) \\
& \subseteq & \m^{q^u \delta_{n-1}} \cdot \tau(R, \D).
\end{eqnarray*}

Therefore, it follows from Proposition \ref{upper test basic} (2) and (4) that
\begin{eqnarray*}
\tau^{n-1,u}_{e,J}(R,\D, \fa^t) \subseteq \newtau{n-1}{u}(R, \D, \fa^t \m^{s_{n-1} +(1/q^{n-1})}) \subseteq I,
\end{eqnarray*}
which shows the equation \ref{A to C Step3}.

{\bf Step.4}
In this step, we will show the equation
\begin{equation}\label{A to C Step4}
\tau^{n-1,u}_{e,\q''} (R,\D, \fa^t) \equiv \tau^{n-1,u}_{e,\q''} (R,\D, \fb^t) \ (\mod{I}).
\end{equation}

As in Step 3, we have
\begin{eqnarray*}
\q'' & \subseteq & \phi^e_\D (F^e_*( \m^{\max \{ 0, q^u \delta_n -M' \}} \cdot \tau(R, \D)))\\
& \subseteq & \m^{\max\{0, q^{u-1} \delta_n -(M'/q)- \emb(R) \}} \cdot \tau(R, \D) \\
& \subseteq & \m^{\max\{0, q^u \delta_{n-1} -M' \}} \cdot \tau(R, \D).
\end{eqnarray*}
By induction hypothesis, we get the equation \ref{A to C Step4}.

By combining the equations \ref{A to C Step2-1}, \ref{A to C Step2-2}, \ref{A to C Step3} and \ref{A to C Step4}, we complete the proof of the claim.
\end{clproof}

\begin{cor}[Theorem \ref{intro key}]\label{key}
Let $(X=\Spec R, \D)$ be a pair such that $(p^e-1)(K_X+\D)$ is Cartier for some integer $e>0$, $I \subseteq R$ be an $\m$-primary ideal, $l, n_0 \ge 0$ and $u \ge 2$ be integers and $t>0$ be a rational number such that $p^e(p^e-1)t \in \N$.
We set $l : = t^{(2)}$, $t_0 : =p^{2e}/(p^e-1)$ and $M_0=(p^{e(n_0+6)}-1) \cdot \emb(R) / (p^e-1)$.
Then there exists an integer $n_1>0$ with the following property:
for any ideal $\fa \subseteq R$ such that 
\begin{enumerate}
\item $p^e>\mu_R(\fa) + \lul+\emb(R)$, and
\item $\newtau{n_0+1}{ u}(R, \D, \fa^{l t_0}) + \m^{M_0}  \cdot \tau(R, \D) \supseteq \newtau{n_0}{ u} (R, \D, \fa^{l t_0})$
\end{enumerate}
we have
\[
\newtau{n}{u}(R,\D, \fa^t) \subseteq I \textup{ if and only if } \newtau{n_1}{u}(R,\D, \fa^t) \subseteq I
\]
for every integer $n \ge n_1$.

\end{cor}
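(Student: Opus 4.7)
The plan is to chain together the three main results of this section. Set $q := p^e$, $N := q^{n_0+3}\,\emb(R)$, and $M := q^{u+2}N$; these constants depend only on the fixed data $(R,\D,e,t,u,n_0,I)$ and not on the ideal $\fa$.

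First, I check the hypotheses of Proposition~\ref{B to A}. Its conditions (1) and (2), $q > \mu_R(\fa)$ and $q > \lul$, are immediate from hypothesis (1) of the corollary. Its condition (3), that the $n$-th digit of $t$ in base $q$ equals the constant $l = t^{(2)}$ for every $n \ge 2$, follows from $q(q-1)t \in \N$ exactly as in the proof of Proposition~\ref{upper test dc}. Its condition (4) is hypothesis (2) of the corollary. So Proposition~\ref{B to A} applies and $(R,\D,\fa^t,I,e,u,N)$ satisfies Condition $(\star)$.

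Next, I invoke Proposition~\ref{A to C}: its extra hypothesis $q > \lul + \mu_R(\fa) + \emb(R)$ is exactly assumption (1). Setting $\fb := \fa + \m^{M}$, the proposition yields
\[
\newtau{n}{u}(R,\D,\fa^t) \subseteq I \iff \newtau{n}{u}(R,\D,\fb^t) \subseteq I
\]
for every $n \ge 1$. The crucial point is that $\fb$ now sits between $\m^M$ and $R$ for a \emph{fixed} $M$, so the question reduces to one about ideals of uniformly bounded colength.

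Finally, I apply Proposition~\ref{ACC for bdd} to the family $\{\fb\}$. Take $\mu := p^e - \lul - \emb(R) - 1$; by hypothesis (1) we have $\mu_R(\fa) \le \mu$ for every admissible $\fa$, and $p^e > \mu + \emb(R)$ as required. Condition (2) of Proposition~\ref{ACC for bdd} holds because $p^e(p^e-1)t \in \N$. Hence there exists an integer $n_1 > 0$, depending only on $(R,\D,e,t,u,n_0,I)$, such that $\newtau{n}{u}(R,\D,\fb^t) = \newtau{n_1}{u}(R,\D,\fb^t)$ for all $n \ge n_1$ and every admissible $\fa$. Applying the equivalence above at indices $n$ and $n_1$ transfers this stabilisation back to $\fa$ and yields the corollary. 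The only real subtlety is \emph{uniformity} of $n_1$: we must ensure every intermediate constant depends solely on the fixed data, which is automatic because $N$, $M$, and $\mu$ are explicit polynomial expressions in $p^e$, $\emb(R)$, $\lul$, $u$, and $n_0$, and Proposition~\ref{ACC for bdd} itself produces an $n_1$ depending only on $(R,\D,e,t,M,\mu,u)$.
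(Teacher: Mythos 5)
Your proof is correct and follows essentially the same route as the paper: apply Proposition~\ref{B to A} to get Condition $(\star)$ with $N=q^{n_0+3}\emb(R)$, pass to $\fb=\fa+\m^{q^{u+n_0+5}\emb(R)}$ via Proposition~\ref{A to C}, and then invoke the uniform stabilization of Proposition~\ref{ACC for bdd}. Your choice $\mu = p^e-\lul-\emb(R)-1$ differs harmlessly from the paper's $\mu=q-\emb(R)-1$; both satisfy the needed inequalities.
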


\begin{proof}
By Proposition \ref{B to A} and Proposition \ref{A to C}, $\fb : = \fa + \m^{q^{u+n_0+5} \emb(R)}$ satisfies
\[
\newtau{n}{u}(R,\D,\fa^t) \subseteq I \textup{ if and only if } \newtau{n}{u}(R,\D,\fb^t) \subseteq I.
\] 
for every integer $n$.

On the other hand, it follows from Proposition \ref{ACC for bdd} that there exists an integer $n_1>0$ which depends only on $\mu : = q- \emb(R)-1 $, $M:=q^{u+n_0+5} \emb(R)$, $e, u$, and $t$ such that for every integer $n>n_1$, we have
\[
\newtau{n}{u}(R,\D,\fb^t) \subseteq I \textup{ if and only if } \newtau{n_1}{u}(R,\D,\fb^t) \subseteq I,
\]
which completes the proof.
\end{proof}

By using the method of ultraproduct, we can apply Corollary \ref{key} to infinitely many ideals simultaneously.

\begin{prop}\label{CondA holds}
Let $(X=\Spec R, \D)$ be a pair such that $(p^e-1)(K_X+\D)$ is Cartier for some integer $e>0$, $I \subseteq R$ be an $\m$-primary ideal, $\{ \fa_m \}_{m \in \N}$ be a family of ideals of $R$, $t >0$ be a rational number, and $\U$ be a non-principal ultrafilter.
Assume that
\begin{enumerate}
\item $\tau(R, \D)$ is $\m$-primary or trivial,
\item $p^e > \mu_R(\fa_m) + \lul + \emb(R)$ for every $m$, and
\item $p^e(p^e-1) t \in \N$.
\end{enumerate}
Then for any sufficiently large integer $u > 0$, there exist an integer $n_1$ and $T \in \U$ such that 
\[
\newtau{n}{u}(R, \D, \fa_m^t) \subseteq I \textup{ if and only if } \newtau{n_1}{u}(R, \D, \fa_m^t) \subseteq I
\]
for every integer $n \ge n_1$ and $m \in T$.
\end{prop}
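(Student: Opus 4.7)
The plan is to apply Corollary \ref{key} uniformly across $m$ in some $T \in \U$, reducing the verification of its hypothesis (2) to a single statement in the catapower $\cata{R}$. Hypothesis (1) of Corollary \ref{key} is exactly assumption (2) here, and assumption (3) ensures the base-$p^e$ expansion of $t$ is eventually periodic. Since Corollary \ref{key} produces a single $n_1$ serving every ideal that meets its hypotheses, the whole task is to find one $n_0$ such that its condition (2) holds for $\fa_m$, uniformly for $m$ in an ultrafilter set.

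First I would form $\fa_\infty := \catae{\fa_m} \subseteq \cata{R}$ and set $l := t^{(2)}$, $t_0 := p^{2e}/(p^e-1)$. Because $\mu_R(\fa_m) < p^e$ is uniformly bounded by assumption (2), Lemma \ref{ulim prod} gives $\fa_\infty^s = \catae{\fa_m^s}$ for every $s$. Moreover $\tau(\cata{R}, \cata{\D}) = \tau(R,\D) \cdot \cata{R} = \catae{\tau(R,\D)}$ by Proposition \ref{test base change} (4), and the trace map $\phi^\bullet_\D$ is compatible with base change to $\cata{R}$ by Proposition \ref{test base change} (5). Combining these, the argument of Proposition \ref{cata tau} carries over and yields
\[
\newtau{n}{u}(\cata{R}, \cata{\D}, \fa_\infty^{l t_0}) = \catae{\newtau{n}{u}(R,\D,\fa_m^{l t_0})}
\]
in $\cata{R}$ for every $n, u \ge 0$.

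Next I would take $u \ge \max\{2, \ustab(\cata{R}, \cata{\D}, \fa_\infty; e)\}$; the second term is finite by Proposition \ref{uniform stab exp}. Under this choice, Proposition \ref{upper test basic} (8) identifies the left-hand side above with $\tau(\cata{R}, \cata{\D}, \fa_\infty^{\qadic{l t_0}{n}})$. Because the $F$-jumping numbers of $(\cata{R}, \cata{\D}; \fa_\infty)$ form a discrete set (Proposition \ref{disc rat}) and $\qadic{l t_0}{n} \to l t_0$ as $n \to \infty$ (Lemma \ref{qadic}), this sequence of test ideals is eventually constant in $n$, so one may choose $n_0$ with
\[
\newtau{n_0+1}{u}(\cata{R}, \cata{\D}, \fa_\infty^{l t_0}) = \newtau{n_0}{u}(\cata{R}, \cata{\D}, \fa_\infty^{l t_0}).
\]
Setting $M_0 := (p^{e(n_0+6)}-1)\emb(R)/(p^e-1)$, the ideal $\m^{M_0} \tau(R,\D)$ is $\m$-primary by assumption (1), since $\tau(R,\D)$ is $\m$-primary or equal to $R$. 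Applying Lemma \ref{ultra incl} to the displayed equality with this $\m$-primary modulus gives $T \in \U$ such that for every $m \in T$
\[
\newtau{n_0}{u}(R,\D,\fa_m^{l t_0}) \subseteq \newtau{n_0+1}{u}(R,\D,\fa_m^{l t_0}) + \m^{M_0} \tau(R,\D),
\]
which is exactly hypothesis (2) of Corollary \ref{key}. That corollary, applied to each $\fa_m$ with $m \in T$, then furnishes the common $n_1$.

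The main obstacle is precisely this uniform verification of hypothesis (2). Passing to $\cata{R}$ collapses the family $\{\fa_m\}$ into the single ideal $\fa_\infty$, where discreteness of $F$-jumping numbers (Proposition \ref{disc rat}) forces stabilization; descending back to $R$ requires Lemma \ref{ultra incl} with an $\m$-primary modulus, which is the only place assumption (1) is used.
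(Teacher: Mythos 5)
Your proposal is correct and follows essentially the same route as the paper: reduce to verifying hypothesis (2) of Corollary \ref{key} uniformly, pass to the catapower where $\newtau{n}{u}(\cata{R},\cata{\D},\fa_\infty^{lt_0}) = \catae{\newtau{n}{u}(R,\D,\fa_m^{lt_0})}$ stabilizes in $n$ by discreteness of $F$-jumping numbers once $u \ge \ustab(\cata{R},\cata{\D},\fa_\infty;e)$, and descend via Lemma \ref{ultra incl} using that $\m^{M_0}\tau(R,\D)$ is $\m$-primary. The order of choices ($n_0$ first, then $M_0$, then $T$) matches the paper's argument exactly.
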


\begin{proof}
Set $t_0 : =p^{2e}/(p^e-1)$.
Since $p^e(p^e-1)t \in \N$, there exists an integer $0<l<p^e$ such that $t^{(n)}=l$ for every $n \ge 2$.
By Corollary \ref{key}, it is enough to show that for any sufficiently large integer $u>0$, there exist an integer $n_0$ and $T \in \U$ such that for every $m \in T$, we have 
\[
\newtau{n_0+1}{ u }(R, \D, \fa^{l t_0}) + \m^{M_0} \cdot \tau(R, \D) \supseteq \newtau{n_0}{ u} (R, \D, \fa^{l t_0}),
\]
 where $M_0 : = (p^{e(n_0+6)}-1)  \emb(R)/(p^e-1)$.

Let $(\cata{R}, \cata{\m})$ be the catapower of $(R, \m)$, $\cata{\D}$ be the flat pullback of $\D$ to $\Spec \cata{R}$ and $\fa_\infty$ be the ideal $\catae{\fa_m} \subseteq \cata{R}$.
It follows from Lemma \ref{ulim prod} that for every integers $u, n \ge 0$ we have 
\[
\newtau{n}{u}(\cata{R}, \cata{\D}, \fa_\infty^{l \cdot t_0}) = \catae{ \newtau{n}{ u}(R, \D, \fa_m^{l \cdot t_0})}.
\]

By Proposition \ref{disc rat}, there exists an integer $n_0 \ge 0$ such that 
\[
\tau ( \cata{R}, \cata{\D}, \fa_\infty^{\qadic{l \cdot t_0}{n_0}}) = \tau(\cata{R}, \cata{\D}, \fa_\infty^{\qadic{l \cdot t_0}{(n_0+1)}})
\]

On the other hand, by Proposition \ref{upper test basic} (8), there exists an integer $u_0$ such that for every integers $u \ge u_0$ and $n \ge 0$, we have
\[
\newtau{ n}{ u}(\cata{R}, \cata{\D}, \fa_\infty^{l \cdot t_0 }) = \tau(\cata{R}, \cata{\D}, \fa_\infty^{\qadic{l \cdot t_0}{n}}).
\]
Therefore, we have
\[
\catae{\newtau{ n_0}{ u} (R, \D, \fa_m^{l \cdot t_0})} = \catae{ \newtau{n_0+1}{  u} (R, \D, \fa_m^{l \cdot t_0})} \subseteq \cata{R}.
\]

Since $\m^{M_0} \cdot \tau(R, \D) \subseteq R$ is an $\m$-primary ideal, it follows from Lemma \ref{ultra incl} that there exists $T \in \U$ such that for every $m \in T$, we have 
\[
\newtau{ n_0}{u} (R, \D, \fa_m^{l \cdot t_0}) \subseteq \newtau{ n_0+1}{ u} (R, \D, \fa_m^{l \cdot t_0}) + \m^{M_0} \cdot \tau(R, \D),
\]
which completes the proof.

\end{proof}

\begin{thm}[Main Theorem]\label{main}
Let $(X=\Spec R, \D)$ be a pair such that $\tau(R, \D)$ is $\m$-primary or trivial and that $(p^e-1)(K_X+\D)$ is Cartier for some integer $e > 0$, and let $I \subseteq R$ be an $\m$-primary ideal.
Then, the set
\[
\FJN^I(R,\D) : = \left\{ \fjn^I (R, \D; \fa) \mid \fa \subsetneq R \right\}
\]
satisfies the ascending chain condition.
\end{thm}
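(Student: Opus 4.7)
The plan is to argue by contradiction. Suppose $\FJN^I(R,\Delta)$ admits a strictly increasing sequence $\{t_m\}_{m \in \N}$, realized as $t_m = \fjn^I(R,\Delta;\fa_m)$ for proper ideals $\fa_m \subsetneq R$. Since $\fjn^I(R,\Delta;\fa) \le \fjn^I(R,\Delta;\m)$ whenever $\fa \subseteq \m$ (by the monotonicity in Lemma~\ref{test inc}(1)), the sequence $\{t_m\}$ is bounded, so after passing to a subsequence I may assume $t_m \to t \in \R$. Fix a non-principal ultrafilter $\U$. Theorem~\ref{fpt sh2} applied to $\{\fa_m\}$ gives $t = \sh(\ulim_m t_m) = \fjn^{I\cdot R_\#}(\cata{R},\cata{\Delta};\fa_\infty) \in \Q$, where $\fa_\infty := \catae{\fa_m}$. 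Fix $e$ so that $(p^e-1)(K_X+\Delta)$ is Cartier and $p^e(p^e-1)t \in \N$.

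The central input is Proposition~\ref{CondA holds}, which requires a uniform bound on $\mu_R$. I would approximate by setting $\fb_m := \fa_m + \m^M$ for a large integer $M$; then $\mu_R(\fb_m) \le \ell_R(R/\m^M) + \mu_R(\m^M)$ depends only on $M$. After enlarging $e$ so $p^e > \mu_R(\fb_m) + \lul + \emb(R)$, and choosing $u$ large enough (via Proposition~\ref{stab fin colen}) that $\newtau{n}{u}(R,\Delta,\fb_m^t) = \tau(R,\Delta,\fb_m^{\qadic{t}{n}})$ uniformly in $m$ and $n$, Proposition~\ref{CondA holds} produces an integer $n_1$ and $T \in \U$ such that
\[
\tau(R,\Delta,\fb_m^{\qadic{t}{n}}) \subseteq I \quad \Longleftrightarrow \quad \tau(R,\Delta,\fb_m^{\qadic{t}{n_1}}) \subseteq I
\]
for all $m \in T$ and $n \ge n_1$. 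Writing $s_m := \fjn^I(R,\Delta;\fb_m)$ and using that $\qadic{t}{n}$ is non-decreasing with $\qadic{t}{n} \to t$, the equivalence forces the dichotomy $s_m \le \qadic{t}{n_1}$ or $s_m \ge t$ for each $m \in T$.

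The first alternative is immediately self-defeating: monotonicity gives $t_m \le s_m \le \qadic{t}{n_1} < t$, contradicting $t_m \to t$ as soon as $m$ is sufficiently large. So one must have $s_m \ge t$ for $m \in T$ large, and the crux of the proof is to convert this into a contradiction. I would exploit the freedom to choose $M$ large to bound $s_m \le t$ uniformly: from $(\fa_m + \m^M)^k \subseteq \fa_m^k + \m^M$ and Lemma~\ref{Skoda}, one has $\tau(R,\Delta,\fb_m^t) \subseteq \tau(R,\Delta,\fa_m^t) + \phi^{e'}_\Delta(F^{e'}_*(\m^M\cdot\tau(R,\Delta)))$ for the uniform stabilization exponent $e'$ from Proposition~\ref{stab fin colen}, and the last summand lies in $I$ whenever $M \ge p^{e'}(\lul+\emb(R))$, so $\tau(R,\Delta,\fb_m^t) \subseteq I$ (since $\tau(R,\Delta,\fa_m^t) \subseteq I$ by $t \ge t_m$). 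Combined with $s_m \ge t$, this pins $s_m = t$ for all $m \in T$ large; iterating with $M \to \infty$ and using the decay $s_m^{(M)} \to t_m < t$ for each fixed $m$ (as $s_m^{(M)} \le t_m + (1/M)\fjn^I(R,\Delta;\m)$) against the ultrafilter statement produces the desired contradiction. The main technical obstacle is the circularity between $M$ and the constants $(u, n_1, e')$ produced by Proposition~\ref{CondA holds}, which I would resolve by a two-stage bootstrap: apply Proposition~\ref{CondA holds} first with a tentative $M_0$ to obtain preliminary $(u_0, n_1^{(0)})$, then enlarge $M$ to satisfy the modulus bound and reapply, using Proposition~\ref{ACC for bdd} to control how $n_1$ depends quantitatively on $M$.
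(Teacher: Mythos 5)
The opening of your argument matches the paper exactly: take a strictly increasing sequence realized by ideals $\fa_m$, pass to the limit $t$, and use Theorem~\ref{fpt sh2} together with Proposition~\ref{disc rat} to conclude $t \in \Q$. The divergence, and the gap, occurs at the step of obtaining a uniform bound on $\mu_R(\fa_m)$ so that Proposition~\ref{CondA holds} can be applied. You propose replacing $\fa_m$ by $\fb_m := \fa_m + \m^M$; but then $\mu_R(\fb_m)$ grows with $M$, forcing $e$ (and hence $u$, $n_1$, and the set $T=T(M)$ produced by Proposition~\ref{CondA holds}) to grow with $M$ as well. Your closing paragraph acknowledges this circularity and proposes a ``two-stage bootstrap,'' but no concrete mechanism is given, and the difficulty is not merely technical: for any \emph{fixed} $M$, the dichotomy ($s_m \le \qadic{t}{n_1}$ or $s_m \ge t$) combined with Lemma~\ref{subadd} only yields $t - s/M \le t_m < t$ for $m \in T(M)$, which is self-consistent, not a contradiction. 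Letting $M \to \infty$ requires intersecting infinitely many sets $T(M) \in \U$, which the ultrafilter does not permit. The intermediate claim that $\tau(R,\D,\fb_m^t) \subseteq I$ (hence $s_m \le t$) is also not justified: subadditivity gives $s_m \le t_m + s/M$, and since $t_m$ can be arbitrarily close to $t$ for fixed $M$, you cannot conclude $s_m \le t$ uniformly over $m \in T(M)$.

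The paper avoids all of this with a cleaner device that your proposal misses: since $\cata{R}$ is Noetherian, $\fa_\infty = \catae{\fa_m}$ is generated by finitely many elements $f_1,\dots,f_l$; lifting these through the surjection $\prod_m \fa_m \to \catae{\fa_m}$ produces sub-ideals $\fa'_m = (f_{m,1},\dots,f_{m,l}) \subseteq \fa_m$ with $\catae{\fa'_m}=\fa_\infty$ and $\mu_R(\fa'_m) \le l$ uniformly. By Theorem~\ref{fpt sh2} the shadow of $\fjn^I(R,\D;\fa'_m)$ is still $t$, and $\fjn^I(R,\D;\fa'_m) \le \fjn^I(R,\D;\fa_m) < t$, so after passing to a subsequence the $\fa'_m$ give a strictly increasing sequence again, now with a uniform generator bound. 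This lets one fix $e$, $u$, $n_1$, $T$ once and for all, and the final contradiction comes from the catapower computation $\catae{\newtau{n_1}{u}(R,\D,\fa'^t_m)} = \tau(\cata{R},\cata{\D},\fa_\infty^{\qadic{t}{n_1}}) \not\subseteq I\cdot\cata{R}$ (because $\qadic{t}{n_1}<t$), which conflicts with $\newtau{n_1}{u}(R,\D,\fa'^t_m)\subseteq I$ on $T$ via Lemma~\ref{ultra incl}. You should replace the $\fa_m + \m^M$ maneuver with this finite-generation argument.
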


\begin{proof}
We assume the contrary.
Then there exists a family of ideals $\{ \fa_m \}_{m \in \N}$ such that $\{ \fjn^I(R, \D; \fa_m) \}_{m \in \N}$ is a strictly ascending chain.
Set $t := \lim_{m \to \infty} \fjn^I (R, \D; \fa_m)$.
It follows from Proposition \ref{disc rat} and Theorem \ref{fpt sh2} that $t \in \Q_{>0}$.

Let $\U$ be a non-principal ultrafilter, $\cata{R}$ be the catapower of $R$, $\cata{\D}$ be the flat pullback of $\D$ to $\Spec \cata{R}$, and $\fa_\infty := \catae{\fa_m} \subseteq \cata{R}$.
Take elements $f_1, \dots, f_l \in \cata{R}$ such that $\fa_\infty = (f_1, \dots, f_l)$.
Since the natural map $\prod_{m \in \N} \fa_m \to \catae{\fa_m}$ is surjective, there exists $f_{m, i} \in \fa_m$ for every $m \in \N$ such that $f_i =\catae{f_{m,i}}$.

Set $\fa_m' : = (f_{m,1}, \dots, f_{m,l}) \subseteq \fa_m$.
Since we have $\catae{\fa_m' } = \fa_\infty$, it follows from Theorem \ref{fpt sh2} that $\sh(\ulim_m \fjn^I (R, \D; \fa_m'))=t$.
On the other hand, since we have $\fjn^I (R, \D; \fa_m^{\prime}) \le \fjn^I (R, \D; \fa_m) <t$, by replacing by a subsequence, we may assume that the sequence $\{ \fjn^I (R, \D; \fa_m') \}$ is a strictly ascending chain.
By replacing $\fa_m$ by $\fa_m'$, we may assume $\mu_R (\fa_m) \le l$ for every $m$.

By enlarging $e$, we may assume that $q=p^e$ satisfies the following properties:
\begin{enumerate}
\item $q(q-1)t \in \N$ and
\item $q>\lul+ l+\emb(R)$.
\end{enumerate}

It follows from Proposition \ref{CondA holds} that there exist integers $u,n_1>0$ and $T \in \U$ such that 
\[
\newtau{n}{u}(R, \D, \fa_m^t) \subseteq I \textup{ if and only if } \newtau{n_1}{u}(R, \D, \fa_m^t) \subseteq I
\]
for every integer $n \ge n_1$ and $m \in T$.
By enlarging $u$, we may further assume that $u \ge \ustab(\cata{R}, \cata{\D} , \fa_\infty; e)$

For every $m \in \N$ and for every sufficiently large $n \gg 0$ we have 
\[
\newtau{ n}{ u} (R, \D, \fa_m^t) \subseteq \tau ( R, \D, \fa_m^{\qadic{t}{n}}) \subseteq I.
\]
Therefore we have $\newtau{ n_1}{ u}(R, \D, \fa_m^t) \subseteq I$ for every $m \in T$.

On the other hand, since $\qadic{t}{n_1} < t = \fjn^{I\cdot \cata{R}}(\cata{R}, \cata{\D} ;\fa_\infty)$, we have 
\begin{eqnarray*}
\catae{ \newtau{ n_1}{u} (R, \D, \fa_m^t)} &=& \newtau{ n_1}{u} (\cata{R}, \cata{\D}, \fa_\infty^t) \\
&=& \tau (\cata{R}, \cata{\D}, \fa_\infty^{\qadic{t}{n_1}}) \\
& \not\subseteq & I \cdot \cata{R}.
\end{eqnarray*}
Therefore, there exists a set $S \in \U$ such that
\[
\newtau{ n_1}{ u} (R, \D, \fa_m^t) \not\subseteq I
\]
for every $m \in S$.
Since $S \cap T \neq \emptyset$, we have contradiction.

\end{proof}

\begin{cor}[Theorem \ref{intro reg}]\label{reg ACC}
Fix an integer $n \ge 1$, a prime number $p>0$ and a set $\Dreg{n}{p}$ such that every element of $\Dreg{n}{p}$ is an $n$-dimensional $F$-finite Noetherian regular local ring of characteristic $p$.
The set
\[
\T^{\mathrm{reg}}_{n,p}: = \{ \mathrm{fpt} (A; \fa) \mid A \in \Dreg{n}{p} ,\fa \subsetneq A \},
\]
 satisfies the ascending chain condition.
\end{cor}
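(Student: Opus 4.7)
The plan is to deduce Corollary \ref{reg ACC} from Theorem \ref{main} by realizing all the $F$-pure thresholds $\fpt(A_m; \fa_m)$ with $A_m \in \Dreg{n}{p}$ as $F$-pure thresholds on a single fixed pair. Suppose for contradiction that $t_1 < t_2 < \cdots$ is a strictly ascending sequence with $t_m = \fpt(A_m; \fa_m)$, $A_m \in \Dreg{n}{p}$, and $\fa_m \subsetneq A_m$.

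First I would reduce to complete regular local rings. The faithfully flat map $A_m \hookrightarrow \widehat{A_m}$ has trivial residue field extension, so the relative-Frobenius hypothesis of Proposition \ref{test base change} holds trivially; by part (4) of that proposition $F$-pure thresholds are preserved, and by the Cohen structure theorem $\widehat{A_m} \cong k_m[[x_1,\dots,x_n]]$ for some $F$-finite field $k_m$ of characteristic $p$. Hence I may assume $A_m = k_m[[x_1,\dots,x_n]]$.

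The crucial step is to find a single $F$-finite field $K$ of characteristic $p$ together with separable embeddings $k_m \hookrightarrow K$ for every $m$. Once such $K$ is available, set $R := K[[x_1,\dots,x_n]] \in \Dreg{n}{p}$. Each induced inclusion $A_m \hookrightarrow R$ is then a flat local homomorphism with $\m R = \n$ (both maximal ideals are $(x_1,\dots,x_n)$), and the separability of $k_m \hookrightarrow K$ makes the relative Frobenius $F^e_{K/k_m}$ an isomorphism for every $e$. Proposition \ref{test base change}(4) then yields $t_m = \fpt(R; \fa_m R) \in \FPT(R)$ for every $m$. To construct $K$, I would imitate the opening step of the proof of Theorem \ref{main}: after showing that the relevant ultraproduct of the family $\{(A_m, \fa_m)\}$ gives a Noetherian local ring, use this to find a uniform bound $l$ on the number of generators of each $\fa_m$ (replacing $\fa_m$ by a subideal without changing the limit of the $\fpt$'s), and then take an appropriate ultraproduct of the residue fields, invoking Proposition \ref{ultra field ext} to obtain $F$-finiteness and separability of the embeddings.

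Applying Theorem \ref{main} to $(R, 0)$ concludes the argument: $R$ is strongly $F$-regular (being regular), so $\tau(R, 0) = R$ is trivial, and $K_{\Spec R}$ is Cartier, so $(p-1)(K_{\Spec R}+0)$ is Cartier. Thus $\FPT(R) = \FJN^{\m_R}(R, 0)$ satisfies ACC, contradicting that $\{t_m\} \subseteq \FPT(R)$ is strictly ascending. The main obstacle is the construction of the universal field $K$: a naive ultraproduct of $F$-finite fields with unbounded $[k_m : k_m^p]$ fails to be $F$-finite, and the embeddings must be controlled to be separable. Overcoming this requires exploiting the bounded-generator reduction to descend each $k_m$ to a subfield with controlled $p$-basis before taking the ultraproduct, which is the technical heart of passing from the fixed-ring Main Theorem to the varying-ring corollary.
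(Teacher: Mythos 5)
Your high-level plan — realize all the $\fpt(A_m;\fa_m)$ as $F$-pure thresholds on one fixed regular local ring and then invoke Theorem \ref{main} — is the same as the paper's. The gap is in how you propose to pass to the single ring: you want to invoke Proposition \ref{test base change}(4), which forces you to produce a single $F$-finite field $K$ receiving \emph{separable} (relative-Frobenius-isomorphism) embeddings $k_m \hookrightarrow K$. That requirement is actually unachievable in general. If $k_m \hookrightarrow K$ is an extension of $F$-finite fields for which $F^1_{K/k_m}: k_m^{1/p}\otimes_{k_m} K \to K^{1/p}$ is an isomorphism, then comparing $K$-dimensions gives $[k_m:k_m^p] = [K:K^p]$; so the hypothesis of Proposition \ref{test base change} forces every $k_m$ to have the \emph{same} $p$-degree as $K$. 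There is no reason the family $\{k_m\}$ has bounded, let alone constant, $p$-degree, and the "bounded-generator" reduction you sketch does not fix this: the coefficients of $l$ generators of $\fa_m$ can still generate a subfield of $k_m$ with arbitrarily large (even infinite) $p$-degree, and descending to such a subfield is itself a base change of exactly the inseparable kind you are trying to avoid — the argument becomes circular. Also, Proposition \ref{ultra field ext}(2) is about the ultrapower of a \emph{single} $F$-finite field and gives no diagonal embeddings $k_m \hookrightarrow \ulim_m k_m$, so it does not supply what you need.

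The paper avoids this obstacle entirely. After completing to reduce to $A_m = k_m[[x_1,\dots,x_n]]$, it chooses any $F$-finite field $k$ containing all $k_m$ — for instance a sufficiently large algebraically closed (hence perfect, hence $F$-finite) field of characteristic $p$ — and then appeals to the argument in the proof of \cite[Theorem 3.5(i)]{BMS2}, which shows that in the \emph{regular} setting $F$-pure thresholds are preserved under an arbitrary coefficient-field extension, with no separability hypothesis (this is specific to regular rings, where $\fpt$ can be computed via the Frobenius powers $\m^{[p^e]}$ and base change is harmless). This gives $\fpt(A_m;\fa_m) = \fpt(k[[x]]; \fa_m\cdot k[[x]]) \in \FPT(k[[x]])$, and Theorem \ref{main} finishes the proof. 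So the fix for your argument is to replace the appeal to Proposition \ref{test base change} by the BMS2-style base change statement for regular rings, and to take $k$ to be a large perfect field rather than trying to build a separable hull via ultraproducts.
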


\begin{proof}
We assume the contrary.
Then there exists a sequence $\{ A_m \}_{m \in \N}$ in $\T^{\mathrm{reg}}_{n,p}$ and ideals $\fa_m \subsetneq A_m$ such that the sequence $\{ \fpt(A_m; \fa_m) \}$ is a strictly ascending chain.

Since test ideals commute with completion(\cite[Proposition 3.2]{HT}), we may assume that $A_m = k_m[[x_1, \dots, x_n]]$ for some $F$-finite field $k_m$.
Take an $F$-finite field $k$ such that $k_m \subseteq k$ for every $m$.
Let $(A,\m_A)$ be the local ring $k[[x_1, \dots, x_n]]$.
Then it follows as in the proof of \cite[Theorem 3.5 (i)]{BMS2} that $\fpt (A; (\fa_m A)) = \fpt(A_m; \fa_m)$.
Therefore, we have $\fpt(A_m; \fa_m) \in \FJN^{\m_A}(A, 0)$ for every $m$, which contradicts to Theorem \ref{main}.
\end{proof}

Let $(R, \m, k)$ be a Noetherian local ring of equicharacteristic.
Then $(R,\m)$ is said to be a \emph{quotient singularity} if there exist a regular affine variety $U=\Spec A$ over $k$, a finite group $G$ with a group homomorphism $G \to \Aut_k(U)$, and a point $x$ of the quotient $V=U/G := \Spec (A^G)$ such that there exists an isomorphism $\widehat{R} \cong \widehat
{\sO_{V, x}}$ as rings.
Moreover, if $|G|$ is coprime to $\chara(k)$, then we say that $(R,\m)$ is a \emph{tame quotient singularity}.

\begin{lem}\label{tame quot basic}
Let $(R,\m, k)$ be a tame quotient singularity of dimension $n$.
Then, there exists a finite group $G \subseteq \GL_n(k)$ with the following properties.
\begin{enumerate}
\item $| G |$ is coprime to $\chara(k)$.
\item The natural action of $G$ on the affine space $\A^n_k$ has no fixed points in codimension 1.
\item Let $V : = \A^n_k/G$ be the quotient and $x \in V$ be the image of the origin of $\A^n_k$.
Then we have $\widehat{R} \cong \widehat{\sO_{V,x}}$.
\end{enumerate}
\end{lem}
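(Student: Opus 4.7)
The plan is to reduce the abstract quotient presentation from the definition to a small linear action on affine space, following the classical Luna/Cartan linearization combined with Chevalley--Shephard--Todd. Let me outline the four steps.

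First, I would replace $U$ by a Zariski neighborhood of a closed preimage $y$ of $x$ under $\pi \colon U \to V = U/G$, and pass to the decomposition group $H := G_y \subseteq G$. Since $|G|$ is coprime to $\chara(k)$, averaging shows that the finite étale map $\coprod_{gH} gU \to U/G$ lets one identify $\widehat{\sO_{V,x}}$ with the $H$-invariants of $\widehat{\sO_{U,y}}$. The residue field at $x$ equals $k$ by hypothesis, so after possibly enlarging $U$ and shrinking equivariantly we may arrange that $y$ is $k$-rational and $\sO_{U,y}$ is a regular local $k$-algebra of dimension $n$ with residue field $k$.

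Second, I would linearize the $H$-action on $\widehat{\sO_{U,y}} \cong k[[t_1,\dots,t_n]]$. Because $|H|$ divides $|G|$ and hence is invertible in $k$, one can average any $k$-linear section of the projection $\widehat{\m}_y \to \widehat{\m}_y/\widehat{\m}_y^2$ to obtain an $H$-equivariant section. Lifting a basis of $\widehat{\m}_y/\widehat{\m}_y^2$ through this section yields an $H$-equivariant $k$-algebra isomorphism $\widehat{\sO_{U,y}} \cong k[[x_1,\dots,x_n]]$ on which $H$ acts via the linear representation $H \to \GL(\widehat{\m}_y/\widehat{\m}_y^2) \cong \GL_n(k)$. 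Thus, replacing the original data, we may assume $U = \A^n_k$, that $H \subseteq \GL_n(k)$ acts linearly fixing the origin, and that $\widehat{R}$ is the completion of $\A^n_k/H$ at the image of the origin.

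Third, I would kill all pseudo-reflections. Let $N \trianglelefteq H$ be the normal subgroup generated by the elements of $H$ whose fixed locus on $\A^n_k$ has codimension $1$. Since $|H|$ is coprime to $\chara(k)$, the Chevalley--Shephard--Todd theorem applies and $k[x_1,\dots,x_n]^N$ is again a polynomial ring in $n$ variables, so $\A^n_k/N \cong \A^n_k$ equivariantly for the induced $H/N$-action, and this induced action is again linear. Setting $G := H/N$ gives a subgroup of $\GL_n(k)$ with $|G|$ dividing $|H|$ (hence coprime to $\chara(k)$), and by construction $G$ contains no pseudo-reflections, which is exactly the ``no fixed points in codimension $1$'' condition. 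The completion of $(\A^n_k/N)/(H/N) = \A^n_k/H$ at the image of the origin is unchanged, giving (3).

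The main technical obstacle is the rationality issue in the very first step: ensuring that the preimage $y$ can be chosen with residue field $k$ rather than a proper extension, so that the representation in Step~2 really lands in $\GL_n(k)$. Once this rational preimage is secured (by a standard argument using the separability guaranteed by the tameness hypothesis and the fact that $k(x) = k$), both the averaging/linearization step and Chevalley--Shephard--Todd are essentially formal. The remaining verifications -- invertibility of $|G|$, the shape of $G$ as a subgroup of $\GL_n(k)$, and the identification of completions -- follow directly from the construction.
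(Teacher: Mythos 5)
Your proposal is correct and follows the same route as the paper: pass to the stabilizer $G_y$ of a preimage $y$ of $x$, linearize the action using Maschke's theorem (the paper realizes this by replacing $U$ with $\Spec \Gr_{\m_A}(A)$ while you work directly with the completed local ring, but this is the same averaging idea), and then quotient by the normal subgroup generated by pseudo-reflections via Chevalley--Shephard--Todd. You also correctly isolate the $k$-rationality of the preimage $y$ as the delicate point needed to get a representation in $\GL_n(k)$ rather than $\GL_n(\kappa(y))$ -- this is left implicit in the paper's sketch -- though note that tameness only gives that $\kappa(y)/k$ is separable, which by itself does not force $\kappa(y)=k$, so the fix would need a little more than stated.
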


\begin{proof}
The proof follows as in the case when $\chara(k)=0$ (see \cite[p.15]{dFEM}), but for the convenience of reader we sketch it here.

Since $R$ is a tame quotient singularity, there exists a regular affine variety $U$, a finite group $G$ which acts on $U$ such that $|G|$ is coprime to $\chara(k)$, and a point $x \in V$ such that $\widehat{R} \cong \widehat{\sO_{V,x}}$.

Take a point $y \in U$ with image $x$.
By replacing $G$ by the stabilizer subgroup $G_y \subseteq G$, we may assume that $G$ acts on the regular local ring $(A, \m_A) : = (\sO_{U,y}, \m_y)$.
Since $|G|$ is coprime to $\chara(k)$, it follows from Maschke's theorem that the natural projection $\m_A \to \m_A/\m_A^2$ has a section as $k[G]$-modules.
This section induces $k[G]$-algebra homomorphism $\Gr_{\m_A}(A) \to A$, where $\Gr_{\m_A}(A)$ is the associated graded ring of $(A, \m_A)$.
Therefore, by replacing $U$ by $\Spec(\Gr_{\m_A}(A))$, we may assume that $U=\A^n_k$ and $G \subseteq \GL_n(k)$.

Let $H \subseteq G$ be the subgroup generated by elements $g \in G$ which fixes some codimension one point of $U$.
Since $|G|$ is coprime to $\chara(k)$, it follows from Chevalley–-Shephard-–Todd theorem (see for example \cite[Theorem 7.2.1]{Ben}) that $U/H \cong \A^n_k$.
By replacing $U$ by $U/H$ and $G$ by $G/H$, we complete the proof of the lemma.
\end{proof}

\begin{prop}[Theorem \ref{intro quot}]\label{quot ACC}
Fix an integer $n \ge 1$, a prime number $p>0$ and a set $\Dquot{n}{p}$ such that every element of $\Dquot{n}{p}$ is an $n$-dimensional $F$-finite Noetherian normal local ring of characteristic $p$ with tame quotient singularities.
The set 
\[
\T^{\mathrm{quot}}_{n,p}: = \{ \mathrm{fpt} (R; \fa) \mid R \in \Dquot{n}{p}, \fa \subsetneq R \textup{ is an ideal} \}
\]
 satisfies the ascending chain condition.
\end{prop}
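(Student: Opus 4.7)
The plan is to reduce the ACC on $\T^{\mathrm{quot}}_{n,p}$ to the ACC on $\T^{\mathrm{reg}}_{n,p}$ established in Corollary \ref{reg ACC}, adapting the strategy used in characteristic zero by de Fernex--Ein--Musta\c{t}\u{a} (\cite{dFEM}). Suppose for contradiction that there exist $R_m \in \Dquot{n}{p}$ and proper ideals $\fa_m \subsetneq R_m$ with $\{\fpt(R_m; \fa_m)\}_{m \in \N}$ strictly ascending. Since test ideals commute with completion for $F$-finite rings (\cite[Proposition 3.2]{HT}), I may replace $R_m$ by $\widehat{R_m}$. By Lemma \ref{tame quot basic} I may then assume $\widehat{R_m}$ is the $\m$-adic completion at the origin of the invariant ring $A_m^{G_m}$, where $A_m = k_m[x_1, \dots, x_n]$ and $G_m \subseteq \GL_n(k_m)$ is a finite group of order coprime to $p$ acting linearly and without pseudo-reflections.

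The heart of the argument is the equality
\[
\fpt(R; \fa) = \fpt(A; \fa A)
\]
for any $A = k[x_1, \dots, x_n]$ and finite $G \subseteq \GL_n(k)$ of order coprime to $p$ without pseudo-reflections, with $R = A^G$ and $\fa \subsetneq R$ proper. I would first establish the descent formula $\tau(R, \fa^t) = \tau(A, (\fa A)^t) \cap R$ for every $t \ge 0$. Two inputs make this tractable: because $G$ has no pseudo-reflections, the finite cover $\pi \colon \Spec A \to \Spec R$ is étale in codimension one, so $K_{\Spec A} = \pi^* K_{\Spec R}$ and the Frobenius trace maps of $A$ and $R$ are compatible; and because $|G|$ is invertible in $R$, the averaging map $\mathrm{Tr} := |G|^{-1} \sum_{g \in G} g \colon A \to R$ splits the inclusion $R \hookrightarrow A$. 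Combining these matches the uniformly $(0, \fa^t, F)$-compatible ideals on both sides and gives the descent formula.

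Granting this, $\tau(A, (\fa A)^t) \subseteq \m_A$ immediately yields $\tau(R, \fa^t) \subseteq \m_A \cap R = \m_R$, so $\fpt(R; \fa) \le \fpt(A; \fa A)$. For the reverse, if $\tau(A, (\fa A)^t)$ contains a unit $u$, then since every $g \in \GL_n(k)$ acts trivially on the residue field of the origin, $\mathrm{Tr}(u) \equiv |G| \, \bar u \pmod{\m_A}$ is non-zero (using $|G|$ coprime to $p$), so $\mathrm{Tr}(u) \in \tau(R, \fa^t)$ is a unit and $\tau(R, \fa^t) \not\subseteq \m_R$. Applying this identity to the counterexample yields $\fpt(R_m; \fa_m) = \fpt(\widehat{A_m}; \fa_m \widehat{A_m})$ with $\widehat{A_m} = k_m[[x_1, \dots, x_n]]$; choosing a single $F$-finite field $k$ containing every $k_m$ and setting $A := k[[x_1, \dots, x_n]]$, the argument of \cite[Theorem 3.5 (i)]{BMS2} (used as in the proof of Corollary \ref{reg ACC}) gives $\fpt(\widehat{A_m}; \fa_m \widehat{A_m}) = \fpt(A; \fa_m A)$, producing a strictly ascending sequence in $\FJN^{\m_A}(A, 0)$ and contradicting Theorem \ref{main}. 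The principal obstacle is the descent formula: while its multiplier-ideal analogue in characteristic zero follows immediately from $K_{\Spec A} = \pi^* K_{\Spec R}$, the test-ideal version must be verified by hand, either via the minimality characterisation of $\tau$ from Definition \ref{test def} or by transporting Schwede's Frobenius-splitting formalism through the $G$-invariant trace.
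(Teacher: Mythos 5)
Your approach is essentially the same as the paper's: pass to completion, realize the germ as a tame quotient $\A^n_k/G$ (no pseudo-reflections, $|G|$ coprime to $p$) via Lemma \ref{tame quot basic}, pull back to the regular cover, and invoke Corollary \ref{reg ACC}. The difference is in how the comparison of $F$-pure thresholds across the cover is obtained. You flag the descent formula $\tau(R,\fa^t)=\tau(A,(\fa A)^t)\cap R$ as ``the principal obstacle'' and propose to verify it by hand via the Reynolds operator and a matching of uniformly $(0,\fa^t,F)$-compatible ideals. In fact that step is already a theorem in the literature: the paper simply cites \cite[Theorem 3.3]{HT}, which gives the test-ideal descent for a finite surjective morphism of normal varieties that is \'etale in codimension one and of degree coprime to $p$, and this directly yields $\fpt(W;\fa\sO_W)=\fpt(U'';\fa\sO_{U''})$. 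So the idea you sketch is correct but does not need to be reproved.

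One further point worth noting: the paper is slightly more careful about the completion step than your write-up. Rather than working with $\widehat{A^G}$ and tacitly identifying it with invariants of the power series ring, the paper sets $W=\Spec(\widehat{R})$ and pulls the cover $\pi\colon U=\A^n_k\to V=U/G$ back along $W\to V$, taking a connected component $U''$ of $U\times_V W$. Since $W\to V$ is a regular morphism and $U$ is regular, $U''$ is regular; and $U''\to W$ remains finite, \'etale in codimension one, with degree dividing $|G|$, hence coprime to $p$. This keeps everything in the exact hypotheses of \cite[Theorem 3.3]{HT} without having to argue separately that taking $G$-invariants commutes with completion. Your argument would go through with that adjustment and the citation added.
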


\begin{proof}
The proof is essentially the same as \cite[Proposition 5.3]{dFEM}.
Let $(R,\m,k)$ be a local ring such that $R \in \Dquot{n}{p}$ and $\fa \subsetneq R$ be an ideal of $R$.
Let $G$, $V$, and $x$ be as in Lemma \ref{tame quot basic}.
Consider the natural morphism $\pi : U:= \A^n_k \to V$.
Since $G$ is a finite group, the morphism $\pi$ is a finite surjective morphism with $\deg(\pi)$ coprime to $\chara(k)$.
Since $G$ acts on $U$ with no fixed points in codimension one, the morphism $\pi$ is \'{e}tale in codimension one.

Set $W := \Spec(\widehat{R})$ and $U' : = U \times_V W$.
Since $U$ is a regular scheme and $W \to V$ is a regular morphism, each connected component of $U'$ is a regular scheme. 
Fix a connected component $U'' \subseteq U'$.

Since the morphism $\widehat{\pi}: U'' \to W$ is finite surjective, \'{e}tale in codimension 1 and $\deg{\widehat{\pi}}$ is coprime to $p$, it follows from \cite[Theorem 3.3]{HT} that 
\[
\fpt(W ;\fa \sO_W) = \fpt(U''; \fa \sO_{U''}).
\]

On the other hand, since the test ideals commute with completion (\cite[Proposition 3.2]{HT}), we have
\[
\fpt(R; \fa)=\fpt(W ; \fa \sO_W).
\]

Therefore, it follows from Corollary \ref{reg ACC} that the set $\T^{\mathrm{quot}}_{n,p}$ satisfies the ascending chain condition.

\end{proof}

We conclude with a natural question as below.

\begin{ques}
Does Theorem \ref{intro reg} give an alternative proof of \cite[Theorem 1.1]{dFEM}?
Moreover, does Theorem \ref{main} imply that the set of all jumping numbers of multiplier ideals with respect to a fixed $\m$-primary ideal on a log $\Q$-Gorenstein pair over $\C$ satisfies the ascending chain condition?
\end{ques}

We hope to consider this question at a later time.






\end{document}